\newcommand{\driverOption}{}
  \renewcommand{\driverOption}{pdftex}
  \renewcommand{\driverOption}{dvips}
\newcommand{\hyperrefDriverOption}{}
	\renewcommand{\hyperrefDriverOption}{pdftex}
	\renewcommand{\hyperrefDriverOption}{hypertex}
	\newcommand{\TM}[1]{\marginpar{\parbox{4cm}{{\small {\bf TM:} #1}}}}
	\newcommand{\FW}[1]{\marginpar{\parbox{4cm}{{\small {\bf FW:} #1}}}}
	\newcommand{\TM}[1]{}
	\newcommand{\FW}[1]{}
\newtheorem{theorem}{Theorem}
\newtheorem{lemma}[theorem]{Lemma}
\newtheorem{proposition}[theorem]{Proposition}
\theoremstyle{definition}
\theoremstyle{remark}
\newtheorem{remark}[theorem]{Remark}
\long\def\symbolfootnote[#1]#2{\begingroup
\def\thefootnote{\fnsymbol{footnote}}\footnote[#1]{#2}\endgroup}
\begin{document}

\begin{center}

\LARGE Proof of the middle levels conjecture
\vspace{2mm}

\Large Torsten Mütze
\vspace{2mm}

\large
  Department of Computer Science \\
  ETH Zürich, 8092 Zürich, Switzerland \\
  {\small\tt torsten.muetze@inf.ethz.ch}
\vspace{5mm}

\small

\begin{minipage}{0.8\linewidth}
\textsc{Abstract.}
Define the middle layer graph as the graph whose vertex set consists of all bitstrings of length $2n+1$ that have exactly $n$ or $n+1$ entries equal to 1, with an edge between any two vertices for which the corresponding bitstrings differ in exactly one bit. The middle levels conjecture asserts that this graph has a Hamilton cycle for every $n\geq 1$. This conjecture originated probably with Havel, Buck and Wiedemann, but has also been attributed to Dejter, Erd{\H{o}}s, Trotter and various others, and despite considerable efforts it remained open during the last 30 years.
In this paper we prove the middle levels conjecture. In fact, we construct $2^{2^{\Omega(n)}}$ different Hamilton cycles in the middle layer graph, which is best possible.
\end{minipage}

\vspace{2mm}

\begin{minipage}{0.8\linewidth}
\textsc{Keywords:} middle levels conjecture, revolving door conjecture, Hamilton cycle, vertex-transitive graph, cube, Gray code
\end{minipage}

\vspace{2mm}

\begin{minipage}{0.8\linewidth}
\textsc{Subject classification:}
05C45,  
94B25   
\end{minipage}

\end{center}

\vspace{5mm}

\setcounter{tocdepth}{2}

\section{Introduction}

The question whether a graph has a Hamilton cycle --- a cycle that visits every vertex exactly once --- is a fundamental graph theoretical problem. Answering this question is one of the prototypical NP-complete problems, as shown by Karp in his landmark paper \cite{Karp72}.
Even for families of graphs defined by very simple algebraic constructions this question turns out to be surprisingly difficult.
One prominent example is the \emph{middle layer graph} whose vertex set consists of all bitstrings of length $2n+1$ that have exactly $n$ or $n+1$ entries equal to 1, with an edge between any two vertices for which the corresponding bitstrings differ in exactly one bit. Note that the middle layer graph is a subgraph of the discrete cube of dimension $2n+1$, the graph whose vertex set are \emph{all} bitstrings of length $2n+1$, with an edge between any two vertices that differ in exactly one bit.
The middle layer graph is bipartite, connected, the number of vertices is $N:=\binom{2n+1}{n}+\binom{2n+1}{n+1}=2^{\Theta(n)}$, and all vertices have degree $n+1=\Theta(\log(N))$ (i.e., the graph is sparse). Moreover, the middle layer graph is vertex-transitive, i.e., any pair of vertices can be mapped onto each other by an automorphism (informally speaking, the graph `looks' the same from the point of view of any vertex).
The \emph{middle levels conjecture}, also known as revolving door conjecture, asserts that the middle layer graph has a Hamilton cycle for every $n\geq 1$. This conjecture originated probably with Havel~\cite{MR737021} and Buck and Wiedemann~\cite{MR737262}, but has also been attributed to Dejter, Erd{\H{o}}s, Trotter~\cite{MR962224} and various others. It also appears as Exercise~56 in Knuth's book~\cite[Section~7.2.1.3]{knuth}.
There are two main motivations for tackling the middle levels conjecture. The first motivation are Gray codes: In its simplest form, a Gray code is a cyclic list of all binary code words (=bitstrings) of a certain length such that any two adjacent code words in the list differ in exactly one bit. Clearly, such a Gray code corresponds to a Hamilton cycle in the entire cube, and a Hamilton cycle in the middle layer graph is a restricted Gray code (see \cite{Savage:1997} for various applications of Gray codes in all their different flavours).
The second motivation is a classical conjecture due to Lov{\'a}sz~\cite{MR0263646}, which asserts that every connected vertex-transitive graph (as e.g.\ the middle layer graph) has a Hamilton path and, apart from five exceptional graphs, even a Hamilton cycle.
This vastly more general conjecture is still wide open today: Even for explicit families of vertex-transitive graphs as e.g.\ the so-called Kneser graphs and bipartite Kneser graphs, only the denser ones are known to have a Hamilton cycle \cite{MR1999733,MR2020936,Johnson11} (Kneser graphs were introduced by Lov{\'a}sz in his celebrated proof of Kneser's conjecture \cite{MR514625}). In fact, the middle layer graph is the sparsest bipartite Kneser graph, so in some sense it is the hardest obstacle in proving Hamiltonicity for this family of graphs.
For further results and references concerning Lov{\'a}sz' conjecture, in particular with respect to other interesting families of vertex-transitive graphs that are defined via group actions (Cayley graphs), we refer to the surveys \cite{Kutnar20095491,MR2548568}.

The middle levels conjecture has attracted considerable attention over the last 30 years.
In a sequence of algorithmic improvements and with the availability of more powerful computers, so far the conjecture has been verified for all $n\leq 19$ \cite{MR1745213,MR2548541,shimada-amano} (for $n=19$ the middle layer graph has $N=137.846.528.820$ vertices).
The first notable asymptotic result is \cite{savage:93}, where it was shown that the middle layer graph has a cycle of length $N^{0.836}$. Improving on this, it was shown in \cite{MR1350586} that there is a cycle that visits $0.25N$ many vertices of the middle layer graph, and in \cite{MR1329390} that there is a cycle that visits $0.839N$ many vertices. Another major step towards the conjecture was \cite{MR2046083}, where the existence of a cycle of length $(1-c/\sqrt{n})N$ was established, where $c$ is some constant.
Unfortunately, attempts to obtain a Hamilton cycle from the union of two perfect matchings in the middle layer graph have not been successful so far \cite{MR962223, MR962224, MR1268348} (even though these constructions of perfect matchings deepened our understanding of the structure of the middle layer graph).
For other relaxations of the middle levels conjecture and partial results, see e.g.\ \cite{horakEtAl:05,Gregor20102448}.

\subsection{Our results}

In this paper we prove the middle levels conjecture.

\begin{theorem}
\label{thm:main1}
For any $n\geq 1$, the middle layer graph has a Hamilton cycle.
\end{theorem}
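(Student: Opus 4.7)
My plan is to follow the long-standing strategy of building the Hamilton cycle from the symmetric difference of two suitably chosen perfect matchings. Concretely, I would start by fixing two perfect matchings $M_1,M_2$ between the level of $n$-element subsets $L_n$ and the level of $(n+1)$-element subsets $L_{n+1}$ of $\{1,\ldots,2n+1\}$. A natural choice is to let $M_1$ be the ``lexical'' matching of Kierstead--Trotter, which pairs a vertex of $L_n$ with the $L_{n+1}$-neighbour obtained by flipping the unique bit determined by a canonical parenthesis matching on the bitstring, and to let $M_2$ be obtained from $M_1$ by a global symmetry (cyclic rotation of coordinates together with bitwise complementation). The union $C_0 := M_1\cup M_2$ is a $2$-regular subgraph of the middle layer graph, hence a disjoint union of even cycles, and I would first check that its components admit a clean combinatorial description: each cycle corresponds to an equivalence class of length-$(2n+2)$ bitstrings with $n+1$ ones modulo cyclic rotation, i.e., to a binary necklace or equivalently to a plane tree with $n+1$ edges.

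With this $2$-factor in hand, the goal becomes to merge all its cycles into one. The standard tool is a \emph{flip}: given two cycles $C,C'$ of $C_0$ that share a pair of ``parallel'' edges $\{u_1v_1,u_2v_2\}\subseteq M_i$ such that the other matching $M_{3-i}$ also contains a pair of parallel edges $\{u_1u_2,v_1v_2\}$ (or, more generally, such that a short local detour through a third cycle is available), replacing these edges recombines $C$ and $C'$ into a single cycle on the same vertex set while preserving $2$-regularity. I would define an auxiliary \emph{flip graph} $\mathcal{H}$ whose vertices are the cycles of $C_0$ and whose edges record all available flips. By standard matroid-style reasoning, the existence of a spanning tree of flips in $\mathcal{H}$ yields a Hamilton cycle: one performs the flips along such a spanning tree, and since each flip strictly merges two components, the result is a single cycle through all $N$ vertices.

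The main obstacle, and where I expect essentially all the work to go, is proving that $\mathcal{H}$ is connected. Here I would exploit the combinatorial interpretation of the cycles of $C_0$ as plane trees (or equivalently as Dyck paths of length $2n+2$). Under this interpretation, a flip between two cycles corresponds to a very specific local surgery on the associated plane trees --- contracting/expanding an edge, or a rotation at a vertex --- and connectivity of $\mathcal{H}$ reduces to showing that any plane tree with $n+1$ edges can be transformed into any other by a sequence of such local moves. I would prove this by induction on $n$: fix a canonical ``caterpillar'' or ``path'' tree as a target, and show that from any plane tree one can execute flips that strictly decrease a suitable potential (for instance, the depth of the leftmost deepest leaf, or a lexicographic tree-invariant) until the canonical tree is reached.

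Two points would require extra care. First, the flip operation as defined above is only guaranteed to work when the ``parallel edges'' configuration genuinely occurs across two distinct cycles, so I need to verify that enough such configurations exist in $C_0$, which is where the precise definition of $M_1,M_2$ matters; if necessary I would enlarge the set of admissible local moves (e.g.\ by allowing simultaneous flips involving three or more cycles) to obtain a richer $\mathcal{H}$. Second, to get the promised lower bound of $2^{2^{\Omega(n)}}$ Hamilton cycles, I would observe that the flip graph $\mathcal{H}$ has $2^{\Omega(n)}$ vertices and, after connectedness is established, a doubly-exponential number of spanning trees, each giving rise to a distinct Hamilton cycle; the matching upper bound $2^{2^{O(n)}}$ is trivial from $N = 2^{\Theta(n)}$.
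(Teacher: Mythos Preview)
Your high-level architecture---build a 2-factor, identify its cycles with plane trees, define a flip/merge graph on the cycles, prove connectivity, and read off a Hamilton cycle from a spanning tree---is exactly the paper's strategy (see Section~\ref{sec:outline}, Lemma~\ref{lemma:GCS-connected}, Proposition~\ref{prop:main1}). So the skeleton is right. The proposal diverges from the paper in the two places where all the content actually lives, and in both places your plan, as stated, would not go through.

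First, your 2-factor is the union of two perfect matchings (Kierstead--Trotter plus a global symmetry). The paper explicitly notes in the introduction that this line of attack has been pursued repeatedly without success; its own 2-factor is \emph{not} a union of matchings but comes from an inductive construction of long dangling paths $\cP_{2n}(n,n+1)$ across all upper layers of $Q_{2n}$ (Section~\ref{sec:construction}). The ``flips'' are correspondingly not parallel-edge swaps between matchings but \emph{flippable pairs of paths} $(P,P')$ with alternative routings $(R,R')$ that reconnect the four endpoints the other way (Section~\ref{sec:flipp}). Your parallel-edge flips and the paper's path-pair flips are not the same gadget, and it is the latter that turns out to be controllable.

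Second, and more seriously, you treat connectivity of the flip graph as a routine step (``induction on $n$'', ``decrease a potential''). The paper shows this is precisely where the difficulty concentrates. For the most natural 2-factor $\cC_{2n+1}^0$---the one whose cycles are in clean bijection with plane trees via simple rotation (Lemma~\ref{lemma:2f-C0})---the resulting flip graph $\cG(\cC_{2n+1}^0,\cX_{2n}^0(n,n+1))$ is \emph{not} connected (Remark~\ref{remark:C0C1}). To obtain connectivity the paper must pass to a second, less transparent 2-factor $\cC_{2n+1}^1$ built with the twisted parameters $\alpha_{2i}=(1,\ldots,1)$, relate it back to $\cC_{2n+1}^0$ through an ad hoc bijection $h$ on Dyck paths (Lemma~\ref{lemma:g0-via-g1}, Lemma~\ref{lemma:2f01-bijection}), and then verify that under $h$ the available flippable pairs become exactly the leaf-moves $\tau_1,\tau_2$ on plane trees needed to connect everything (Lemmas~\ref{lemma:hinv-is-tau1}, \ref{lemma:hinv-is-tau2}, \ref{lemma:flipp-pairs-special}, \ref{lemma:gn-gCX}). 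None of this is visible from the outline you give, and your remark that one could ``enlarge the set of admissible local moves if necessary'' is exactly the point at which prior matching-based attempts stalled: one does not know how to manufacture enough cross-cycle flips from $M_1\cup M_2$ alone.
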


In fact, we prove the following more general result:

\begin{theorem}
\label{thm:main2}
For any $n\geq 1$, the middle layer graph has at least $\frac{1}{4}2^{2^{\lfloor(n+1)/4\rfloor}}=2^{2^{\Omega(n)}}$ different Hamilton cycles.
\end{theorem}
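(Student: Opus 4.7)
My plan is to prove Theorems~\ref{thm:main1} and~\ref{thm:main2} simultaneously by constructing Hamilton cycles via a procedure that exposes many independent binary choices. The overall strategy has three stages: build a highly symmetric $2$-factor of the middle layer graph, use local flip operations to merge its cycles into one, and then locate enough independent ``choice sites'' to account for the double exponential count.

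\emph{Stage one: a canonical $2$-factor.} I would begin by identifying each vertex of the middle layer graph with a lattice path of length $2n+1$ via the standard bijection that turns a $1$-bit into an up-step and a $0$-bit into a down-step. Under this identification the matching of parentheses (equivalently, the ``first return'' decomposition of the path) suggests a natural collection of short cycles that together cover every vertex exactly once. Verifying the covering property is essentially a counting exercise using the recursion underlying Catalan-type objects, and the resulting $2$-factor inherits a transparent recursive structure from that decomposition.

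\emph{Stage two: merging cycles into a Hamilton cycle.} I would then introduce a local flip that takes two parallel edges shared by adjacent cycles of the $2$-factor and replaces them by the other two edges of the surrounding $4$-cycle, thereby merging the two cycles into one. The key step is to show that the auxiliary graph whose vertices are the cycles of the $2$-factor and whose edges are the admissible flip positions is connected: once connected, any spanning tree in it encodes a sequence of flips producing a single Hamilton cycle, which already proves Theorem~\ref{thm:main1}.

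\emph{Stage three: counting Hamilton cycles.} For Theorem~\ref{thm:main2}, I would locate at least $2^{\lfloor(n+1)/4\rfloor}$ disjoint ``choice sites'' in the construction of stage one, each of which admits two locally distinct $2$-factor realizations that remain compatible with the merging scheme of stage two. To guarantee that distinct choice vectors produce distinct Hamilton cycles, I would exhibit, at each site, a small bit-pattern in the resulting cycle whose presence is determined by the local choice and unaffected by the choices elsewhere. Multiplying the $2^{2^{\lfloor(n+1)/4\rfloor}}$ choice vectors and dividing out by the trivial symmetries (cyclic rotation and orientation) accounts for the factor $\tfrac{1}{4}$ in the bound.

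The main obstacle will be the interaction of stages two and three: showing that the flip-merging procedure succeeds \emph{regardless} of which of the $2^{\lfloor(n+1)/4\rfloor}$ local configurations has been selected at each choice site. Equivalently, the cycle-merging auxiliary graph must remain connected under every admissible family of local perturbations, and the induced Hamilton cycles must all differ. A secondary challenge is establishing genuine independence of the choice sites -- so that no two distinct choice vectors accidentally encode the same Hamilton cycle -- which typically requires a local invariant preserved by the merging process away from each site.
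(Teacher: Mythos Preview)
Your Stages~1 and~2 are broadly aligned with the paper's approach: the paper also builds a specific $2$-factor, defines local ``flippable pairs'' (which are pairs of \emph{paths}, not pairs of edges as you describe, but the effect is the same), and reduces Theorem~\ref{thm:main1} to connectivity of an auxiliary graph whose nodes are the cycles of the $2$-factor.

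Your Stage~3, however, diverges from the paper and contains a real gap. The paper does \emph{not} vary the $2$-factor. It fixes one $2$-factor $\cC_{2n+1}^1$ and one set of flippable pairs $\cX$, and then observes (Lemma~\ref{lemma:GCS-spanning}) that \emph{every spanning tree} of the auxiliary graph $\cG(\cC_{2n+1}^1,\cX)$ yields a distinct Hamilton cycle. The double-exponential count then comes entirely from counting spanning trees of this single auxiliary graph: the paper embeds a chain of $2^k-1$ edge-disjoint $4$-cycles (indexed by a Gray code on $k=\lfloor(n-3)/4\rfloor$ bits) into $\cG_n$, and each $4$-cycle independently contributes a factor of~$4$ to the spanning-tree count, giving $4^{2^k-1}=\tfrac14\,2^{2^{\lfloor(n+1)/4\rfloor}}$. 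In particular, the factor $\tfrac14$ is an artefact of this counting and has nothing to do with rotational or reflective symmetries of the Hamilton cycle.

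Your plan instead proposes $2^{\lfloor(n+1)/4\rfloor}$ binary choice sites in the $2$-factor itself, and you correctly identify the obstacle: you must then prove that the merging scheme succeeds for \emph{every} one of the doubly-exponentially many perturbed $2$-factors, and that no two choice vectors collapse to the same Hamilton cycle. You do not indicate how to overcome either difficulty. The paper's approach sidesteps both problems at once: because the $2$-factor and the auxiliary graph are fixed, connectivity is proved once, and distinctness of Hamilton cycles follows immediately from distinctness of spanning trees (two different spanning trees differ in at least one flippable pair, hence in at least one edge of the resulting Hamilton cycle). If you want to salvage your outline, the cleanest fix is to drop the idea of perturbing the $2$-factor and instead count spanning trees of the auxiliary graph directly.
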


Note that $2^{2^{\Omega(n)}}$ Hamilton cycles are substantially more than we get from applying all $2(2n+1)!=2^{\Theta(n\log n)}$ automorphisms of the middle layer graph to a single Hamilton cycle (these automorphisms are given by bit permutations and possibly bit inversion). In fact, any graph $G$ has at most $|V(G)|!$ different Hamilton cycles, where $V(G)$ denotes the vertex set of $G$. This establishes an upper bound of $N!=2^{2^{\cO(n)}}$ for the number of Hamilton cycles in the middle layer graph and shows that Theorem~\ref{thm:main2} is best possible (up to the constant in the exponent).

Our arguments are constructive and yield an algorithm that outputs each of the Hamilton cycles referred to in Theorem~\ref{thm:main2} in polynomial time per cycle (polynomial in the size of the middle layer graph, which is exponential in $n$).

\subsection{Proof ideas}
\label{sec:outline}

Before starting work in earnest, we give an informal overview of the main ideas and techniques used in the proofs.
On a very high level, our construction of Hamilton cycles in the middle layer graph consists of two steps. In the first step, we construct a 2-factor in this graph, i.e., a set of disjoint cycles that visit all vertices of the graph, or equivalently, a $2$-regular spanning subgraph. Constructing a 2-factor is clearly much easier than constructing a Hamilton cycle directly. In the second step we modify the 2-factor locally by what we call \emph{flippable pairs} to join all of its cycles to a single cycle (which is then a Hamilton cycle). In fact, the concept of flippable pairs allows us to reduce the problem of proving that the middle layer graph has a Hamilton cycle (or many Hamilton cycles) to the problem of proving that a suitably defined auxiliary graph is connected (or has many spanning trees), which is considerably easier.
We emphasize here that the concept of flippable pairs in principle applies to \emph{any} graph, not just the middle layer graph. We therefore believe that this two-step approach to proving Hamiltonicity can be extended to other interesting families of graphs (e.g.\ the above-mentioned Kneser graphs).

In the following we explain the two steps of our construction in more detail. 
For the reader's convenience, the notions and ideas introduced below are illustrated in Figure~\ref{fig:idea}.

\subsubsection{Structure of Hamilton cycles in the middle layer graph}
\label{sec:outline-structure}

Our constructions are based on some simple observations about the structure of Hamilton cycles in the middle layer graph.
To discuss those, we need to introduce some definitions (throughout this paper, key definitions will be highlighted by \emph{italic} headings).

\textit{The discrete cube and its layers.}
For any $n\geq 1$ we define $B_n:=\{0,1\}^n$ as the set of all bitstrings of length $n$, and we let $B_n(k)\seq B_n$, $0\leq k\leq n$, denote the set of all bitstrings of length $n$ with exactly $k$ entries equal to 1 (and the other $n-k$ entries equal to 0).
We define the \emph{$n$-dimensional cube $Q_n$} as the graph with vertex set $B_n$ and an edge between any two vertices for which the corresponding bitstrings differ in exactly one bit. Moreover, we define the graph $Q_n(k,k+1)$, $0\leq k\leq n-1$, as the subgraph of $Q_n$ induced by the vertex sets $B_n(k)$ and $B_n(k+1)$, and we refer to $Q_n(k,k+1)$ as a \emph{layer} of $Q_n$. In particular, we refer to the layers $Q_{2n}(k,k+1)$, $k=n,n+1,\ldots,2n-1$, as the \emph{upper layers} of $Q_{2n}$, and to $Q_{2n+1}(n,n+1)$ as the \emph{middle layer} of $Q_{2n+1}$ (this is the middle layer graph).

In the following, for any bitstring $x$, we denote by $B_n(k)\circ x$ the set of bitstrings obtained from $B_n(k)$ by attaching $x$ to each bitstring from $B_n(k)$, and by $Q_n(k,k+1)\circ x$ the graph obtained by from $Q_n(k,k+1)$ by attaching $x$ to the bitstring at each vertex.
By partitioning the vertices of the middle layer graph $Q_{2n+1}(n,n+1)$ into two sets according to the value of the last bit (0 or 1), we observe that this graph has the following structure (see the top of Figure~\ref{fig:idea}): It consists of a copy of $Q_{2n}(n,n+1)\circ (0)$ (with edges between the vertex sets $B_{2n}(n)\circ (0)$ and $B_{2n}(n+1)\circ (0)$) and a copy of $Q_{2n}(n-1,n)\circ (1)$ (with edges between the vertex sets $B_{2n}(n-1)\circ(1)$ and $B_{2n}(n)\circ(1)$) plus a perfect matching $M_{2n+1}$ between the vertex sets $B_{2n}(n)\circ (0)$ and $B_{2n}(n)\circ(1)$ (these are exactly the edges on which the last bit flips).
As a consequence, \emph{any} Hamilton cycle $H$ in the middle layer graph $Q_{2n+1}(n,n+1)$ has the following structure: Removing from $H$ all edges from the matching $M_{2n+1}$, what is left are sets of disjoint paths $\cP$ and $\cP'$ in the subgraphs $Q_{2n}(n,n+1)\circ (0)$ and $Q_{2n}(n-1,n)\circ (1)$ that visit all vertices in these subgraphs, and that start and end in the vertex sets $B_{2n}(n)\circ(0)$ and $B_{2n}(n)\circ (1)$, respectively (see the top of Figure~\ref{fig:idea}). Note that $|\cP|=|\cP'|=|B_{2n}(n)|-|B_{2n}(n+1)|=\binom{2n}{n}-\binom{2n}{n+1}=\frac{1}{n+1}\binom{2n}{n}=C_n$, where $C_n$ denotes the $n$-th Catalan number, i.e., the number of paths in $\cP$ and $\cP'$ is $C_n=2^{\Theta(n)}$ (regardless of the Hamilton cycle $H$). In particular, any Hamilton cycle has exactly $2C_n$ edges on which the last bit flips (these are edges from $M_{2n+1}$). As this argument can be repeated for every bit position, any Hamilton cycle in the middle layer graph $Q_{2n+1}(n,n+1)$ has exactly $2C_n$ edges on which the $i$-th bit flips, for every $i=1,\ldots,2n+1$. This enforced balancedness of the number of bitflips in each coordinate explains the difficulty of proving the middle levels conjecture inductively: Any inductive argument has to show how to connect exponentially many paths to a single Hamilton cycle.

Observe that for the overall structure of the Hamilton cycle $H$, only the end vertices of the paths in $\cP$ and $\cP'$ are relevant. In fact, we can think of these paths as single edges of a matching, connecting the end vertices of the paths. Moreover, as far as the cycle structure of $H$ is concerned, we can ignore the edges from the matching $M_{2n+1}$ and think of $\cP$ and $\cP'$ as subgraphs of $Q_{2n}(n,n+1)$ and $Q_{2n}(n-1,n)$, respectively, graphs that share the set of vertices $B_{2n}(n)$ (see the middle of Figure~\ref{fig:idea}). Note that this corresponds to contracting the edges from the matching $M_{2n+1}$, or deleting the $(2n+1)$-th bit from all vertices in $\cP$ and $\cP'$.
This simplified way of thinking about Hamilton cycles in the middle layer graph will be very fruitful, and is also used at the bottom of Figure~\ref{fig:idea} (paths are drawn as matching edges, and edges from the matching $M_{2n+1}$ are ignored).

\begin{figure}
\centering
\PSforPDF{
 \psfrag{q2np1}{$Q_{2n+1}(n,n+1)$}
 \psfrag{q2n0}{$Q_{2n}(n,n+1)\circ(0)$}
 \psfrag{q2n1}{$Q_{2n}(n-1,n)\circ(1)$}
 \psfrag{q2n0n}{$Q_{2n}(n,n+1)$}
 \psfrag{q2n1n}{$Q_{2n}(n-1,n)$}
 \psfrag{b2nnp1}{$B_{2n}(n+1)\circ(0)$}
 \psfrag{b2n0}{$B_{2n}(n)\circ(0)$}
 \psfrag{b2n1}{$B_{2n}(n)\circ(1)$}
 \psfrag{b2nnm1}{$B_{2n}(n-1)\circ(1)$}
 \psfrag{m2}{$M_{2n+1}$}
 \psfrag{ham}{$H$}
 \psfrag{pa}{$\cP$}
 \psfrag{pap}{$\cP'$}
 \psfrag{pa0}{$\cP\circ(0)$}
 \psfrag{pa1}{$f(\cP)\circ(1)$}
 \psfrag{cc}{$\cC_{2n+1}$}
 \psfrag{p1}{\small $P_1$}
 \psfrag{p2}{\small $P_2$}
 \psfrag{p3}{\small $P_3$}
 \psfrag{p4}{\small $P_4$}
 \psfrag{p5}{\small $P_5$}
 \psfrag{p6}{\small $P_6$}
 \psfrag{r2}{\small $R_2$}
 \psfrag{r3}{\small $R_3$}
 \psfrag{fp1}{\small $f(P_1)$}
 \psfrag{fp2}{\small $f(P_2)$}
 \psfrag{fp3}{\small $f(P_3)$}
 \psfrag{fp4}{\small $f(P_4)$}
 \psfrag{fp5}{\small $f(P_5)$}
 \psfrag{fp6}{\small $f(P_6)$}
 \psfrag{c1}{$C_1$}
 \psfrag{c2}{$C_2$}
 \psfrag{c3}{$C_3$}
 \psfrag{gcc}{$\cG(\cC_{2n+1},\cX)$}
 \psfrag{p1p4}{\small $(P_1,P_4)$}
 \psfrag{p2p3}{\small $(P_2,P_3)$}
 \psfrag{p5p6}{\small $(P_5,P_6)$}
 \psfrag{sp}{\parbox{15mm}{the \\ flippable pair $(P_2,P_3)$}}
 \includegraphics{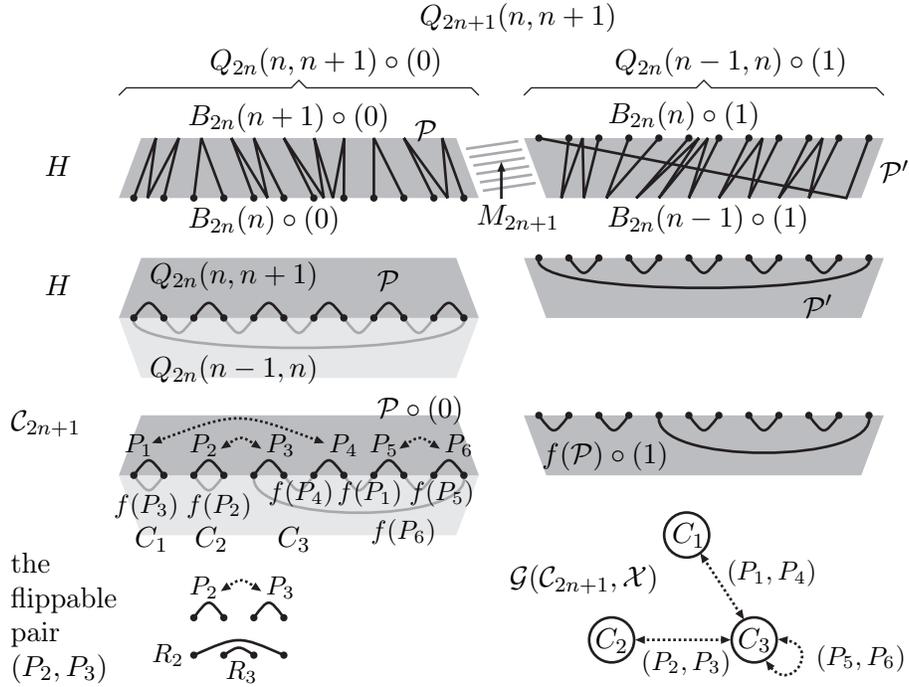}
}
\caption{A Hamilton cycle $H$ in the middle layer graph $Q_{2n+1}(n,n+1)$ (top), a schematic view of the end vertices of the sets of paths $\cP$ and $\cP'$ induced by this Hamilton cycle (middle), a 2-factor $\cC_{2n+1}=\{C_1,C_2,C_3\}$ and flippable pairs $\cX=\{(P_1,P_4),(P_2,P_3),(P_5,P_6)\}$ (bottom left), and the corresponding graph $\cG(\cC_{2n+1},\cX)$ (bottom right). Flippable pairs of paths are indicated by dotted arrows.}
\label{fig:idea}
\end{figure}

\subsubsection{Inductive construction of 2-factors in the middle layer graph}
\label{sec:outline-2factor}

As indicated before, to prove that the middle layer graph has a Hamilton cycle, we begin by constructing a 2-factor in this graph. This construction has already been presented and analyzed partially in our earlier work \cite{muetze-weber:12} (joint with Franziska Weber). In this approach, we inductively construct sets of disjoint paths in \emph{all} upper layers of $Q_{2n}$ (not just in a single layer). In particular, we obtain a set $\cP$ of exactly $C_n$ many disjoint paths in $Q_{2n}(n,n+1)$ that visit all vertices of this graph and that start and end in $B_{2n}(n)$.
In an intermediate step of the construction, the paths $\cP$ in $Q_{2n}(n,n+1)$ are used to build a 2-factor $\cC_{2n+1}$ in the middle layer of $Q_{2n+1}$ as follows (see the bottom of Figure~\ref{fig:idea}): By applying an isomorphism $f$ between the graphs $Q_{2n}(n,n+1)$ and $Q_{2n}(n-1,n)$, we obtain a set of disjoint paths $f(\cP)$ in $Q_{2n}(n-1,n)$ that visit all vertices of this graph and that start and end in $B_{2n}(n)$. The end vertices of paths in $\cP$ and the isomorphism $f$ are such that the set of all path end vertices (a subset of $B_{2n}(n)$) is mapped onto itself (however, the end vertices of one particular path are in general \emph{not} mapped onto themselves, see Figure~\ref{fig:idea}).
I.e., we obtain a 2-factor $\cC_{2n+1}$ in $Q_{2n+1}(n,n+1)$ by taking the union of $\cP\circ(0)$, $f(\cP)\circ(1)$ and the appropriate matching edges from $M_{2n+1}$. As mentioned before, for analyzing the cycle structure of $\cC_{2n+1}$, it suffices to consider the two matchings on the set of end vertices of $\cP$ and $f(\cP)$ induced by these paths.

It turns out that the choice of the isomorphism $f$ in each induction step allows some freedom, so this construction yields in total $2^{\binom{n}{2}}=2^{\Theta(n^2)}$ different 2-factors in the middle layer graph $Q_{2n+1}(n,n+1)$. Unfortunately, only few of them (for very simple choices of $f$) seem to be amenable to theoretical analysis. This is because varying $f$ changes many end vertices of paths in $f(\cP)$ simultaneously and therefore affects the resulting 2-factor $\cC_{2n+1}$ \emph{globally} (in a way that is hard to control).
Even though numerical experiments performed in \cite{muetze-weber:12} suggest that the entire family of 2-factors arising from this construction contains a Hamilton cycle for every $n\geq 1$, the 2-factors that could be analyzed theoretically have many short cycles (the cycle length is at most quadratic in $n$, so the number of cycles is exponential).

\subsubsection{Flippable pairs}
\label{sec:outline-flipp}

The construction of 2-factors from \cite{muetze-weber:12} outlined before is used as a basis for our construction. However, we add a new ingredient, and this is the concept of \emph{flippable pairs}. Let $P_1,P_1',\ldots,P_\ell,P_\ell'$ be pairwise different paths from the set $\cP$ as constructed before. We call $\cX=\{(P_1,P_1'),\ldots,(P_\ell,P_\ell')\}$ a set of flippable pairs, if for each pair $(P_i,P_i')$ there is an alternative pair of paths $(R_i,R_i')$, where $R_i$ and $R_i'$ are subgraphs of $Q_{2n}(n,n+1)$ such that $R_i$ and $R_i'$ together visit the same vertices as $P_i$ and $P_i'$, and such that $R_i$ and $R_i'$ connect the end vertices of $P_i$ and $P_i'$ the opposite way (see the bottom left of Figure~\ref{fig:idea}, where a flippable pair of paths $(P_2,P_3)$ and the corresponding pair $(R_2,R_3)$ is shown). Note that $R_i$ and $R_i'$ are \emph{not} contained in $\cP$.
We can think of replacing $P_i\circ(0)$ and $P_i'\circ(0)$ in the 2-factor $\cC_{2n+1}$ by the paths $R_i\circ(0)$ and $R_i'\circ(0)$ as a flipping operation (connecting the end vertices of the paths the other way).
Note that this flipping operation can be performed independently for each flippable pair, i.e., from a set of $\ell$ flippable pairs for the set of paths $\cP$ we obtain in total $2^\ell$ different 2-factors from the basic 2-factor $\cC_{2n+1}$ and very precise \emph{local} control over them (this is in stark contrast to what happens when varying the isomorphism $f$ in the above basic construction).
A set of flippable pairs $\cX$ for $\cP$ in the 2-factor $\cC_{2n+1}$ gives rise to the graph $\cG(\cC_{2n+1},\cX)$, in which each cycle of $\cC_{2n+1}$ becomes a node, and two nodes are connected by an edge whenever there is a flippable pair $(P,P')$ in $\cX$ such that $P\circ(0)$ and $P'\circ(0)$ are contained in the corresponding cycles (see the bottom right of Figure~\ref{fig:idea}). Observe that if $\cG(\cC_{2n+1},\cX)$ is connected, then we obtain a Hamilton cycle from the basic 2-factor $\cC_{2n+1}$ by flipping all pairs of paths that form a spanning tree in $\cG(\cC_{2n+1},\cX)$. Moreover, each spanning tree of $\cG(\cC_{2n+1},\cX)$ gives rise to a different Hamilton cycle in the middle layer graph.
This reduction step is crucial: It reduces the problem of proving that a graph has a Hamilton cycle (or many Hamilton cycles) to the problem of proving that some auxiliary graph is connected (or has many spanning trees), which is considerably easier.

Fortunately, flippable pairs are not just a void theoretical concept, but they can be constructed inductively along the lines of the construction of the 2-factor $\cC_{2n+1}$ outlined above. In fact, this construction gives rise to a set $\cX$ of $C_{n-1}$ many flippable pairs. As $2|\cX|/|\cP|=2C_{n-1}/C_n=(n+1)/(2n-1)>1/2$, more than half of all paths from $\cP$ are contained in a flippable pair in $\cX$, which is rather promising (on the other hand, we clearly need exponentially many flippable pairs to connect exponentially many cycles in the 2-factor $\cC_{2n+1}$ to a Hamilton cycle).

\subsubsection{Analysis of the graph \texorpdfstring{$\cG(\cC_{2n+1},\cX)$}{G(C2n+1,X)}}

While describing and proving the inductive constructions of the 2-factor $\cC_{2n+1}$ and the corresponding flippable pairs $\cX$ is relatively straightforward, the analysis of the graph $\cG(\cC_{2n+1},\cX)$ (proving that it is connected and that it has many spanning trees) is rather technical. We show that for one particular choice of construction parameters the cycles of $\cC_{2n+1}$ are in one-to-one correspondence with all plane trees with $n$ edges, i.e., each node of $\cG(\cC_{2n+1},\cX)$ can be interpreted as a plane tree (this rather unexpected correspondence has already been described in \cite{muetze-weber:12}). Moreover, each edge of $\cG(\cC_{2n+1},\cX)$, i.e., each flippable pair from $\cX$, can be interpreted as an elementary transformation between the corresponding plane trees, namely removing a leaf of the tree and attaching it to a different vertex (take a peek at Figure~\ref{fig:g6} below). Proving that $\cG(\cC_{2n+1},\cX)$ is connected then amounts to showing that each plane tree can be transformed into every other plane tree by a sequence of such elementary transformations.
For all these arguments we will repeatedly employ Catalan-type bijections between different sets of combinatorial objects such as certain types of bitstrings, lattice paths and trees.

\subsection{Outline of this paper}

As mentioned before, our construction of Hamilton cycles in the middle layer graph is based on the construction of 2-factors described in our earlier work \cite{muetze-weber:12}. Since both constructions are inherently linked, and since we aim for a self-contained paper, we reproduce some of the required results (including proofs) from \cite{muetze-weber:12} in this paper (without repeatedly mentioning this again). Basically, the contents of Section~\ref{sec:defs-construction}, \ref{sec:correctness} and some parts of Section~\ref{sec:structure-2-factor} of this paper already appeared in \cite{muetze-weber:12}.

In Section~\ref{sec:defs-construction} we describe the basic construction of 2-factors in the middle layer graph.
The proof of a key lemma which ensures that the construction works as claimed is deferred to Section~\ref{sec:correctness}.
In Section~\ref{sec:flipp} we describe the corresponding construction of flippable pairs for those 2-factors.
Moreover, in this section we spell out the details of the abovementioned reduction from a Hamiltonicity to a connectivity problem, and present the proofs of Theorem~\ref{thm:main1} and \ref{thm:main2}. These proofs rely on two propositions (Propositions~\ref{prop:main1} and \ref{prop:main2} below), which state that the graph $\cG(\cC_{2n+1},\cX)$ is connected and that is has the required number of spanning trees.
The rest of the paper is devoted to proving Proposition~\ref{prop:main1} and \ref{prop:main2}, i.e., to analyze the graph $\cG(\cC_{2n+1},\cX)$. Specifically, in Section~\ref{sec:structure-2-factor} we analyze the structure of the 2-factor $\cC_{2n+1}$, and in Section~\ref{sec:structure-flipp} we analyze the structure of the flippable pairs $\cX$. The proofs of Proposition~\ref{prop:main1} and \ref{prop:main2} are completed in Section~\ref{sec:proofs-prop12}.

\section{Construction of 2-factors in the middle layer graph}
\label{sec:defs-construction}

In this section we describe the construction of 2-factors in the middle layer graph outlined in Section~\ref{sec:outline-2factor}.

\subsection{Definitions and notation}
\label{sec:notation}

We start by introducing a few more definitions that will be used throughout the paper.

\textit{Composition of mappings, bijections between combinatorial objects.}
We write the composition of mappings $f,g$ as $f\bullet g$, where $(f\bullet g)(x):=f(g(x))$.
Given a mapping $f$ defined on a set of combinatorial objects $X$, and a bijection $g$ between $X$ and some other set of combinatorial objects $Y$, the function $f$ can be extended in a natural way to a mapping on $Y$ by setting
\begin{equation}
\label{eq:biject-map}
   f:=g\bullet f\bullet g^{-1} \enspace.
\end{equation}
In this paper we specifically deal with functions $f$ defined on certain types of bitstrings, lattice paths or trees (the precise definitions will be given later) and with bijections between these sets, and in understanding $f$ it is often useful to consider how $f$ operates on one of the other sets of objects.

\textit{Notational conventions.}
To simplify notation we regularly adopt the following conventions:
Singleton sets $\{x\}$ are denoted as $x$. For any function $f:X\rightarrow Y$ and any subset $X'\seq X$ we write $f(X'):=\bigcup_{x\in X'} f(x)$. Similarly, for sequences $(x_1,\ldots,x_k)\in X^k$ we write $f(x_1,\ldots,x_k):=(f(x_1),\ldots,f(x_k))$. Furthermore, for any function $f:X_1\times \cdots\times X_k\rightarrow Y$ and subsets $X_i'\seq X_i$, $i=1,\ldots,k$, we define $f(X_1',\ldots,X_k'):=f(X_1'\times \cdots\times X_k')$. For any function $f:X\rightarrow Y$ and any graph $G$ with vertex set $V(G)\seq X$ we denote by $f(G)$ the graph obtained from $G$ by replacing each vertex $v$ by $f(v)$ (so the vertex set of $f(G)$ is $f(V(G))$).

\textit{Reversing, inverting and concatenating bitstrings.}
For any bitstring $x=(x_1,x_2,\ldots,x_n)\in\{0,1\}^n$ we define $\rev(x):=(x_n,x_{n-1},\ldots,x_1)$. Furthermore, setting $\ol{0}:=1$, $\ol{1}:=0$, by the above conventions we have $\ol{x}=(\ol{x_1},\ol{x_2},\ldots,\ol{x_n})$.
For bitstrings $x$ and $y$ we denote by $x\circ y$ the concatenation of $x$ and $y$.
For any bitstring $x$ we define $x^0:=()$ and $x^k:=x\circ x^{k-1}$ for any integer $k\geq 1$.
By these definitions and the above conventions we can write e.g.\ $\rev\big((0)^2\circ\{(1,1),(0,1)\}\circ \ol{(1,0)}\big)=\rev(\{(0,0,1,1,0,1),(0,0,0,1,0,1)\})=\{(1,0,1,1,0,0),(1,0,1,0,0,0)\}$.
Several examples how the concatenation $\circ$ operates on graphs whose vertices are bitstrings were already presented in Section~\ref{sec:outline-structure}.

\textit{Inductive decomposition of the discrete cube.}
In addition to the decomposition of $Q_n$ into layers discussed at the beginning of Section~\ref{sec:outline}, there is another important inductive decomposition of this graph. Note that $Q_n$ consists of a copy of $Q_{n-1}\circ(0)$, a copy of $Q_{n-1}\circ(1)$ and a perfect matching $M_n$ that connects corresponding vertices in the two subgraphs (along the edges of $M_n$, the last bit flips). Unrolling this inductive construction for another step, $Q_n$ is obtained from $Q_{n-2}\circ(0,0)$, $Q_{n-2}\circ(1,0)$, $Q_{n-2}\circ(0,1)$ and $Q_{n-2}\circ(1,1)$ plus two perfect matchings $M_n$ and $M_n':=M_{n-1}\circ(0)\cup M_{n-1}\circ (1)$ (see Figure~\ref{fig:cube-decomposition}). Our inductive construction of 2-factors in the middle layer of $Q_{2n+1}$ is based on this inductive decomposition of $Q_{2n+2}$ into four copies of $Q_{2n}$ plus the two perfect matchings $M_{2n+2}$ and $M_{2n+2}'$.

\begin{figure}
\centering
\PSforPDF{
 \psfrag{b20}{$B_2(0)$}
 \psfrag{b21}{$B_2(1)$}
 \psfrag{b22}{$B_2(2)$}
 \psfrag{b40}{$B_4(0)$}
 \psfrag{b40}{$B_4(0)$}
 \psfrag{b41}{$B_4(1)$}
 \psfrag{b42}{$B_4(2)$}
 \psfrag{b43}{$B_4(3)$}
 \psfrag{b44}{$B_4(4)$}
 \psfrag{q5}{\Large $Q_2$}
 \psfrag{q6}{\Large $Q_4$}
 \psfrag{q1}{$Q_2\circ(0,0)$}
 \psfrag{q2}{$Q_2\circ(1,0)$}
 \psfrag{q3}{$Q_2\circ(0,1)$}
 \psfrag{q4}{$Q_2\circ(1,1)$}
 \psfrag{m30}{$M_3\circ(0)$}
 \psfrag{m31}{$M_3\circ(1)$}
 \psfrag{m4}{$M_4$}
 \psfrag{m4p}{$M_4'=M_3\circ(0)\cup M_3\circ(1)$}
 
 \psfrag{b2n2n}{\small $B_{2n}(2n)$}
 \psfrag{vdots}{$\vdots$}
 \psfrag{b2nnp1}{\small $B_{2n}(n+1)$}
 \psfrag{b2nn}{\small $B_{2n}(n)$}
 \psfrag{b2nnm1}{\small $B_{2n}(n-1)$}
 \psfrag{b2n0}{\small $B_{2n}(0)$}
 \psfrag{b2np22np2}{\small $B_{2n+2}(2n+2)$}
 \psfrag{b2np2np2}{\small $B_{2n+2}(n+2)$}
 \psfrag{b2np2np1}{\small $B_{2n+2}(n+1)$}
 \psfrag{b2np2n}{\small $B_{2n+2}(n)$}
 \psfrag{b2np20}{\small $B_{2n+2}(0)$}
 \psfrag{q2n}{\Large $Q_{2n}$}
 \psfrag{q2np2}{\Large $Q_{2n+2}$}
 \psfrag{q2n1}{$Q_{2n}\circ(0,0)$}
 \psfrag{q2n2}{$Q_{2n}\circ(1,0)$}
 \psfrag{q2n3}{$Q_{2n}\circ(0,1)$}
 \psfrag{q2n4}{$Q_{2n}\circ(1,1)$}
 \psfrag{m1}{$M_{2n+1}\circ(0)$}
 \psfrag{m2}{$M_{2n+1}\circ(1)$}
 \psfrag{m3}{$M_{2n+2}$}
 \psfrag{m2np2p}{$M_{2n+2}'=M_{2n+1}\circ(0)\cup M_{2n+1}\circ(1)$}
 \includegraphics{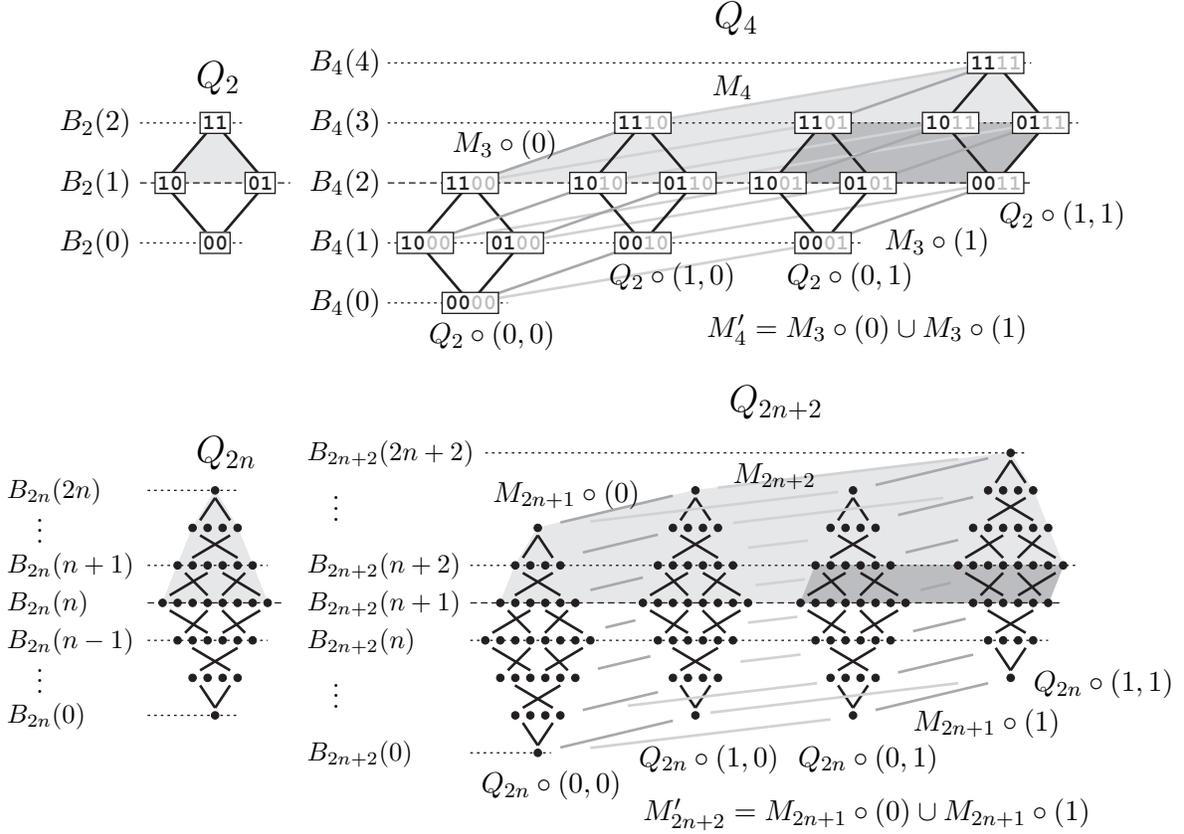}
}
\caption{Decomposition of $Q_{2n+2}$ into four copies of $Q_{2n}$ plus two perfect matchings (the top part shows a concrete example, the bottom part a schematic representation of the general structure). The light grey regions show the upper layers of $Q_{2n}$ and $Q_{2n+2}$ and the dark grey region the middle layer of $Q_{2n+1}\circ(1)$.}
\label{fig:cube-decomposition}
\end{figure}

\textit{Oriented paths, dangling paths.}
In our approach we construct certain paths as subgraphs of layers of the cube. The order of vertices along those paths is important for us, i.e., $P=(v_1,v_2,\ldots,v_\ell)$ is a different \emph{oriented path} than $P'=(v_\ell,v_{\ell-1},\ldots,v_1)$.
For an oriented path $P=(v_1,v_2,\ldots,v_\ell)$ we define $F(P):=v_1$, $S(P):=v_2$ and $L(P):=v_\ell$, as the \emph{first}, \emph{second} and \emph{last} vertex of $P$, respectively.
We refer to a path $P$ in $Q_n(k,k+1)$ that starts and ends at a vertex in the set $B_n(k)$ as a \emph{dangling path}. As $Q_n(k,k+1)$ is bipartite, every second vertex of such a path $P$ is contained in the set $B_n(k+1)$ (and $P$ has even length).

\subsection{Construction of 2-factors}
\label{sec:construction}

The construction is parametrized by some sequence $(\alpha_{2i})_{i\geq 1}$, $\alpha_{2i}\in\{0,1\}^{i-1}$. Given this sequence, we inductively construct a set $\cP_{2n}(k,k+1)$ of disjoint dangling oriented paths in $Q_{2n}(k,k+1)$ for all $n\geq 1$ and all $k=n,n+1,\ldots,2n-1$ such that the following conditions hold:
\begin{enumerate}[(i)]
\item The paths in $\cP_{2n}(n,n+1)$ visit all vertices in the sets $B_{2n}(n+1)$ and $B_{2n}(n)$.
\item For $k=n+1,\ldots,2n-1$, the paths in $\cP_{2n}(k,k+1)$ visit all vertices in the set $B_{2n}(k+1)$, and the only vertices not visited in the set $B_{2n}(k)$ are exactly the elements in the set $S(\cP_{2n}(k-1,k))$.
\end{enumerate}
For simplicity we do not make the dependence of the sets $\cP_{2n}(k,k+1)$ from the parameters $(\alpha_{2i})_{i\geq 1}$ explicit, but we will discuss those dependencies in detail in Section~\ref{sec:dependence-alpha} below.

\textbf{Induction basis $n=1$ ($Q_2$):}
For the induction basis we define
\begin{equation} \label{eq:ind-base-P}
  \cP_{2}(1,2):=\{((1,0),(1,1),(0,1))\} \enspace,
\end{equation}
i.e., the set $\cP_{2}(1,2)$ consists only of a single oriented path on three vertices. It is easily checked that this set of paths in \emph{the} upper layer of $Q_2$ satisfies the conditions~(i) and (ii) (condition~(ii) is satisfied trivially).

\textbf{Induction step $n\rightarrow n+1$ ($Q_{2n}\rightarrow Q_{2n+2}$), $n\geq 1$:}
The inductive construction consists of two intermediate steps. For the reader's convenience those steps are illustrated in Figure~\ref{fig:construction}.

\textit{First intermediate step: Construction of a 2-factor in the middle layer of $Q_{2n+1}$.}
Using only the paths in the set $\cP_{2n}(n,n+1)$ and the parameter $\alpha_{2n}=(\alpha_{2n}(1),\ldots,\alpha_{2n}(n-1))\in\{0,1\}^{n-1}$ we first construct a 2-factor in the middle layer of $Q_{2n+1}$.

Note that the graphs $Q_{2n}(n,n+1)$ and $Q_{2n}(n-1,n)$ are isomorphic to each other. We define an isomorphism $f_{\alpha_{2n}}$ between these graphs as follows: Let $\pi_{\alpha_{2n}}$ denote the permutation on the set $B_{2n}=\{0,1\}^{2n}$ that swaps any two adjacent bits at positions $2i$ and $2i+1$ for all $i=1,\ldots,n-1$, if and only if $\alpha_{2n}(i)=1$, and that leaves the bits at position $1$ and $2n$ unchanged. If e.g.\ $\alpha_{2n}=(0,\ldots,0)$, then no bits are swapped and $\pi_{\alpha_{2n}}=\id$ is simply the identity mapping.
For any bitstring $x\in B_{2n}$ we then define
\begin{equation} \label{eq:f-alpha}
  f_{\alpha_{2n}}(x):=\ol{\rev(\pi_{\alpha_{2n}}(x))} \enspace.
\end{equation}
The fact that this mapping is indeed an isomorphism between the graphs $Q_{2n}(n,n+1)$ and $Q_{2n}(n-1,n)$ follows easily by observing that $\rev(\pi_{\alpha_{2n}}())$ is an automorphism of the graph $Q_{2n}(n,n+1)$ (this mapping just permutes bits).

\begin{figure}
\centering
\PSforPDF{
 \psfrag{b2n2n}{\scriptsize $B_{2n}(2n)$}
 \psfrag{p2n2nm12n}{\scriptsize $\cP_{2n}(2n-1,2n)$}
 \psfrag{vdots}{$\vdots$}
 \psfrag{b2nnp1}{\scriptsize $B_{2n}(n+1)$}
 \psfrag{p2nnnp1}{\scriptsize $\cP_{2n}(n,n+1)$}
 \psfrag{b2nn}{\scriptsize $B_{2n}(n)$}
 \psfrag{fp}{\scriptsize $f_{\alpha_{2n}}(\cP_{2n}(n,n+1))$}
 \psfrag{b2nnm1}{\scriptsize $B_{2n}(n-1)$}
 \psfrag{b2np22np2}{\scriptsize $B_{2n+2}(2n+2)$}
 \psfrag{p2np2np2np3}{\scriptsize $\cP_{2n+2}(n+2,n+3)$}
 \psfrag{b2np2np2}{\scriptsize $B_{2n+2}(n+2)$}
 \psfrag{p2np2np1np2}{\scriptsize $\cP_{2n+2}(n+1,n+2)$}
 \psfrag{b2np2np1}{\scriptsize $B_{2n+2}(n+1)$}
 \psfrag{q2n}{\Large $Q_{2n}$}
 \psfrag{q2np1}{\Large $Q_{2n+1}\circ(1)$}
 \psfrag{q2np2}{\Large $Q_{2n+2}$}
 \psfrag{q2n1}{$Q_{2n}\circ(0,0)$}
 \psfrag{q2n2}{$Q_{2n}\circ(1,0)$}
 \psfrag{q2n3}{$Q_{2n}\circ(0,1)$}
 \psfrag{q2n4}{$Q_{2n}\circ(1,1)$}
 \psfrag{m2}{$M_{2n+1}^{FL}\circ(1)$} 
 \psfrag{m1}{$M_{2n+2}^{S}$}
 \includegraphics{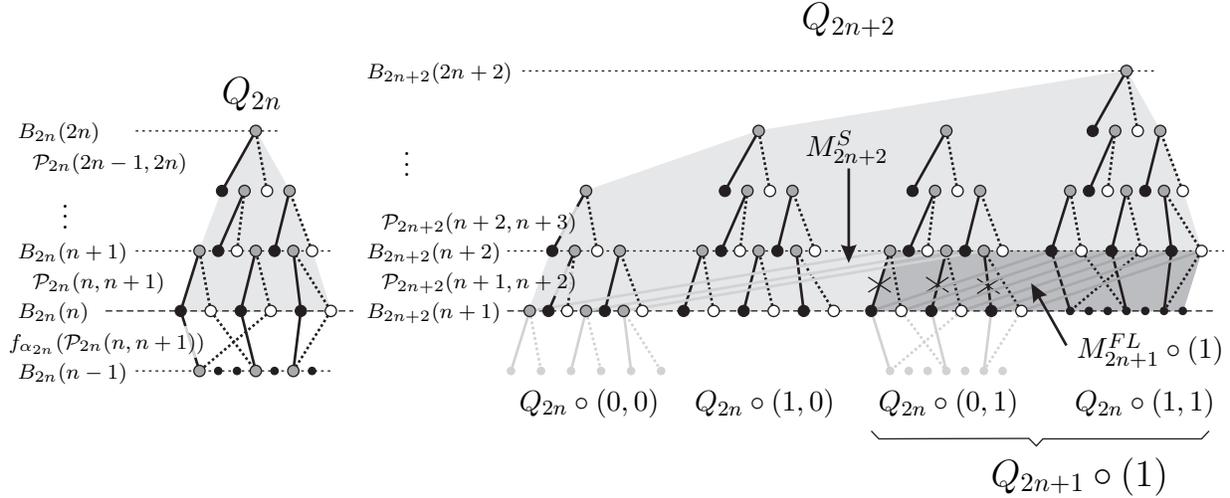}
}
\caption{Schematic illustration of the induction step. The light grey regions show the upper layers of $Q_{2n}$ and $Q_{2n+2}$ and the dark grey region the middle layer of $Q_{2n+1}\circ(1)$. For each dangling oriented path $P=(v_1,v_2,\ldots,v_\ell)$ contained in one of the layers, only the first vertex $F(P)=v_1$ (black), the second vertex $S(P)=v_2$ (grey) and the last vertex $L(P)=v_\ell$ (white) are shown, and the path between the vertices $S(P)=v_2$ and $L(P)=v_\ell$ is represented by a dotted black line (even if this path has more than one edge). The crossed-out edges are deleted from the 2-factor in the middle layer of $Q_{2n+1}\circ(1)$ to construct the paths in $\cP_{2n+2}(n+1,n+2)$.}
\label{fig:construction}
\end{figure}

We will later prove the following crucial lemma.
It states that the sets of first and last vertices of paths from $\cP_{2n}(n,n+1)$ are preserved under the mapping $f_{\alpha_{2n}}$ (see the left hand side of Figure~\ref{fig:construction}). Note however, that the end vertices of one particular path are in general \emph{not} mapped onto themselves.

\begin{lemma} \label{lemma:FL-invariant}
For any $n\geq 1$ and any $\alpha_{2n}\in\{0,1\}^{n-1}$, we have
\begin{equation} \label{eq:FL-invariant}
  f_{\alpha_{2n}}(F(\cP_{2n}(n,n+1)))=F(\cP_{2n}(n,n+1)) \quad\text{and}\quad f_{\alpha_{2n}}(L(\cP_{2n}(n,n+1)))=L(\cP_{2n}(n,n+1)) \enspace,
\end{equation}
where $\cP_{2n}(n,n+1)$ is the set of paths in $Q_{2n}(n,n+1)$ constructed in previous steps for an arbitrary sequence of parameters $(\alpha_{2i})_{1\leq i\leq n-1}$, $\alpha_{2i}\in\{0,1\}^{i-1}$.
\end{lemma}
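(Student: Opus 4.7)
The proof will proceed by induction on $n$. For the base case $n=1$, we have $\cP_{2}(1,2)=\{((1,0),(1,1),(0,1))\}$, hence $F(\cP_{2}(1,2))=\{(1,0)\}$ and $L(\cP_{2}(1,2))=\{(0,1)\}$. Since $\alpha_2\in\{0,1\}^0$ is empty, $\pi_{\alpha_2}=\id$ and $f_{\alpha_2}(x)=\ol{\rev(x)}$, and a direct computation gives $f_{\alpha_2}((1,0))=\ol{(0,1)}=(1,0)$ and $f_{\alpha_2}((0,1))=\ol{(1,0)}=(0,1)$, so both $F(\cP_{2}(1,2))$ and $L(\cP_{2}(1,2))$ are in fact fixed pointwise.

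For the inductive step, the key preliminary observation is how $f_{\alpha_{2n}}$ interacts with the outer two coordinates. Because $\pi_{\alpha_{2n}}$ fixes the bits at positions $1$ and $2n$, a direct unfolding of the definition shows that for $x=(x_1,x_2,\ldots,x_{2n})$ the first and last entries of $f_{\alpha_{2n}}(x)$ equal $\ol{x_{2n}}$ and $\ol{x_1}$, respectively. Consequently $f_{\alpha_{2n}}$ preserves as a set the subset of $B_{2n}(n)$ consisting of bitstrings starting with $1$ and ending with $0$, and likewise the subset of bitstrings starting with $0$ and ending with $1$. My strategy is therefore to inspect the construction of $\cP_{2n+2}(n+1,n+2)$ from the 2-factor $\cC_{2n+1}$ in the middle layer of $Q_{2n+1}\circ(1)$ (see Figure~\ref{fig:construction}), and to describe $F(\cP_{2n+2}(n+1,n+2))$ and $L(\cP_{2n+2}(n+1,n+2))$ explicitly as disjoint unions of pieces of the form $F(\cP_{2n}(n,n+1))\circ(0,1)$ and $L(\cP_{2n}(n,n+1))\circ(0,1)$, together with analogous pieces carrying the suffix $(1,1)$ that come from the copy $f_{\alpha_{2n}}(\cP_{2n}(n,n+1))\circ(1,1)$. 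Once such a decomposition is available, one computes the action of $f_{\alpha_{2n+2}}$ on each individual piece and checks that the image is again a union of pieces of the same two kinds, invoking the inductive hypothesis to control the action on the middle $2n$ bits.

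The main obstacle I anticipate is a parity mismatch between consecutive levels. The pairs of positions swapped by $\pi_{\alpha_{2n+2}}$ on a length-$(2n+2)$ string are $(2i,2i+1)$ for $i=1,\ldots,n$, and after removing the two fixed outer bits these become the pairs $(2i-1,2i)$, $i=1,\ldots,n$, of the length-$2n$ interior substring, which form a perfect matching of $\{1,\ldots,2n\}$. By contrast, $\pi_{\alpha_{2n}}$ swaps the pairs $(2,3),(4,5),\ldots,(2n-2,2n-1)$, leaving positions $1$ and $2n$ unmatched. Thus the restriction of $f_{\alpha_{2n+2}}$ to the middle $2n$ coordinates is \emph{not} literally $f_{\alpha_{2n}}$, so the inductive hypothesis does not apply verbatim to the interior. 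Reconciling the two groupings --- by combining the outermost bit of the length-$2n$ interior with the corresponding bit of the $(0,1)$ or $(1,1)$ suffix to realize the extra swap pair of $\pi_{\alpha_{2n+2}}$, and then matching the resulting reshuffled blocks against the two halves of the decomposition of $F$ and $L$ --- is where I expect the technical bulk of the argument, and the content of the forthcoming Section~\ref{sec:correctness}, to lie.
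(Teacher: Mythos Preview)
Your induction does not close, and the obstacle is more fundamental than the parity issue you flag. From \eqref{eq:ind-step2-P} and \eqref{eq:new-paths-FSL} one reads off
\[
F(\cP_{2n+2}(n+1,n+2)) = F(\cP_{2n}(n+1,n+2))\circ(0,0) \;\cup\; F(\cP_{2n}(n,n+1))\circ(1,0) \;\cup\; S(\cP_{2n}(n,n+1))\circ(0,0),
\]
not a union of pieces of the form $F(\cP_{2n}(n,n+1))\circ(\cdot)$ and $L(\cP_{2n}(n,n+1))\circ(\cdot)$ as you propose. Two of the three pieces --- the first vertices from the \emph{higher} layer $Q_{2n}(n+1,n+2)$ and the \emph{second} vertices of $\cP_{2n}(n,n+1)$ --- are not controlled by your inductive hypothesis at all; the analogous decomposition of $L(\cP_{2n+2}(n+1,n+2))$ has the same defect. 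So even before the parity mismatch bites, the hypothesis is too weak to run the induction.

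The paper therefore does not induct on the invariance statement directly. Instead it proves the stronger fact (Lemma~\ref{lemma:FSL-D-isomorphic}) that, regardless of the parameter sequence, the sets $F(\cP_{2n}(k,k+1))$, $S(\cP_{2n}(k,k+1))$, $L(\cP_{2n}(k,k+1))$ coincide under the bitstring-to-lattice-path bijection $\varphi$ with the explicit sets $D_{2n}^{=0}(k)$, $D_{2n}^{>0}(k+1)$, $D_{2n}^-(k)$; this is what the induction is spent on, and it necessarily tracks all three families in all upper layers simultaneously. Once $\varphi(F(\cP_{2n}(n,n+1)))=D_{2n}^{=0}(n)$ and $\varphi(L(\cP_{2n}(n,n+1)))=D_{2n}^-(n)$ are known, invariance under $f_{\alpha_{2n}}$ is a direct non-inductive check on lattice paths (Lemma~\ref{lemma:dyck-paths-invariant}): $\ol{\rev}$ is mirror reflection, and each swap in $\pi_{\alpha_{2n}}$ preserves both sets because these paths have even height at even abscissas, so any local peak that a swap could flatten sits at height $\geq 2$. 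This global description also dissolves the parity obstacle you correctly identified.
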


As explained in Section~\ref{sec:outline-structure}, the middle layer graph $Q_{2n+1}(n,n+1)$ can be decomposed into the graphs $Q_{2n}(n,n+1)\circ(0)$ and $Q_{2n}(n-1,n)\circ(1)$ plus the edges from $M_{2n+1}$ that connect the vertices in the set $B_{2n}(n)\circ(0)$ to the vertices in the set $B_{2n}(n)\circ(1)$ (see the top of Figure~\ref{fig:idea} and the right hand side of Figure~\ref{fig:construction}). Denoting by $M_{2n+1}^{FL}$ the edges from $M_{2n+1}$ that have one end vertex in the set $\big(F(\cP_{2n}(n,n+1))\cup L(\cP_{2n}(n,n+1))\big)\circ(0)\seq B_{2n}(n)\circ(0)$ (and the other in the set $\big(F(\cP_{2n}(n,n+1))\cup L(\cP_{2n}(n,n+1))\big)\circ(1)\seq B_{2n}(n)\circ(1)$), by Lemma~\ref{lemma:FL-invariant} the graph
\begin{equation} \label{eq:2-factor}
  \cC_{2n+1}:=\cP_{2n}(n,n+1)\circ(0)\cup f_{\alpha_{2n}}(\cP_{2n}(n,n+1))\circ(1)\cup M_{2n+1}^{FL}
\end{equation}
is a 2-factor in the middle layer of $Q_{2n+1}$, with the property that on every cycle of $\cC_{2n+1}$, every edge of the form $(F(P),S(P))\circ(0)$ for some $P\in\cP_{2n}(n,n+1)$ is oriented the same way. Even though we are eventually only interested in the 2-factor $\cC_{2n+1}$ defined in \eqref{eq:2-factor}, we need to specify how to proceed with the inductive construction of the sets of paths $\cP_{2n+2}(k,k+1)$.

Before doing this, we define two 2-factors that are obtained from the general definition \eqref{eq:2-factor} for two particular parameter sequences. These 2-factors will become important in later sections of this paper.
We define
\begin{equation}
\label{eq:2-factor-0}
  \cC_{2n+1}^0 := \cC_{2n+1} \quad \text{for} \enspace \alpha_{2i}:=(0,0,\ldots,0)\seq \{0,1\}^{i-1}, \enspace i=1,\ldots,n
\end{equation}
(the all-zero vector is used as a parameter in each construction step).
We also define
\begin{equation}
\label{eq:2-factor-1}
  \cC_{2n+1}^1 := \cC_{2n+1} \quad \text{for} \enspace
  \alpha_{2i}:= \begin{cases}
                (1,1,\ldots,1)\seq \{0,1\}^{i-1}, \enspace i=1,\ldots,n-1 \enspace, \\
                (0,0,\ldots,0)\seq \{0,1\}^{n-1}, \enspace i=n \\
                \end{cases}
\end{equation}
(up to the last construction step the all-one vector is used as a parameter, and in the last step the all-zero vector).

\textit{Second intermediate step: Splitting up the 2-factor into dangling paths.}
We proceed by describing how the sets of paths $\cP_{2n+2}(k,k+1)$ for all $k=n+1,n+2,\ldots,2n+1$ satisfying the conditions~(i) and (ii) are defined, using the previously constructed sets $\cP_{2n}(k,k+1)$ and the 2-factor $\cC_{2n+1}$ defined in the first intermediate step.

Consider the decomposition of $Q_{2n+2}$ into $Q_{2n}\circ(0,0)$, $Q_{2n}\circ(1,0)$, $Q_{2n}\circ(0,1)$ and $Q_{2n}\circ(1,1)$ plus the two perfect matchings $M_{2n+2}$ and $M_{2n+2}'$ as described in Section~\ref{sec:notation}.
For all $k=n+2,\ldots,2n+1$ we define
\begin{equation}
\label{eq:ind-step1-P}
\begin{split}
  \cP_{2n+2}(k,k+1) &:= \cP_{2n}(k,k+1)\circ(0,0)\cup\cP_{2n}(k-1,k)\circ(1,0) \\
                    &\qquad \cup \cP_{2n}(k-1,k)\circ(0,1)\cup\cP_{2n}(k-2,k-1)\circ(1,1)  \enspace,
\end{split}
\end{equation}
where we use the convention $\cP_{2n}(2n,2n+1):=\emptyset$ and $\cP_{2n}(2n+1,2n+2):=\emptyset$ to unify treatment of the sets of paths $\cP_{2n+2}(2n,2n+1)$ and $\cP_{2n+2}(2n+1,2n+2)$ in the two uppermost layers of $Q_{2n+2}$ (see Figure~\ref{fig:construction}). Note that so far none of the edges from the matchings $M_{2n+2}$ or $M_{2n+2}'$ is used.

The definition of the set $\cP_{2n+2}(n+1,n+2)$ is slightly more involved. Note that the graph $Q_{2n+2}(n+1,n+2)$ can be decomposed into $Q_{2n+1}(n+1,n+2)\circ(0)$ and $Q_{2n+1}(n,n+1)\circ(1)$ plus the edges from $M_{2n+2}$ that connect the vertices in the set $B_{2n+1}(n+1)\circ(0)$ to the vertices in the set $B_{2n+1}(n+1)\circ(1)$. The first graph can be further decomposed into $Q_{2n}(n+1,n+2)\circ(0,0)$ and $Q_{2n}(n,n+1)\circ(1,0)$ plus some matching edges that are not relevant here. The second graph is the middle layer of $Q_{2n+1}\circ(1)$. Let $\cC_{2n+1}^-$ denote the graph obtained from the 2-factor $\cC_{2n+1}$  defined in \eqref{eq:2-factor} by removing every edge of the form $(F(P),S(P))\circ(0)$ for some $P\in\cP_{2n}(n,n+1)$ (those edges are crossed out in Figure~\ref{fig:construction}). As on every cycle of $\cC_{2n+1}$ every such edge is oriented the same way, $\cC_{2n+1}^-$ is a set of paths (visiting all vertices of the middle layer of $Q_{2n+1}$), with the property that each of those paths starts at a vertex of the form $S(P)\circ(0)$ and ends at a vertex of the form $F(P')\circ(0)$ for two paths $P,P'\in\cP_{2n}(n,n+1)$. Denoting by $M_{2n+2}^S$ the edges from $M_{2n+2}$ that have one end vertex in the set $S(\cP_{2n}(n,n+1))\circ(0,0)\seq B_{2n}(n+1)\circ(0,0)$ (and the other in the set $S(\cP_{2n}(n,n+1))\circ(0,1)\seq B_{2n}(n+1)\circ(0,1)$), it follows that
\begin{equation} \label{eq:new-paths}
  \cP_{2n+2}^*:=M_{2n+2}^S\cup \cC_{2n+1}^-\circ(1)
\end{equation}
is a set of dangling oriented paths, where we choose the orientation of each path such that the edge from the set $M_{2n+2}^S$ is the first edge (see Figure~\ref{fig:construction}).
Note that we have
\begin{subequations} \label{eq:new-paths-FSL}
\begin{align}
  F(\cP_{2n+2}^*) &= S(\cP_{2n}(n,n+1))\circ(0,0) \enspace, \label{eq:new-paths-F} \\
  S(\cP_{2n+2}^*) &= S(\cP_{2n}(n,n+1))\circ(0,1) \enspace, \label{eq:new-paths-S} \\
  L(\cP_{2n+2}^*) &= F(\cP_{2n}(n,n+1))\circ(0,1) \enspace. \label{eq:new-paths-L}
\end{align}
\end{subequations}
We then define
\begin{equation} \label{eq:ind-step2-P}
  \cP_{2n+2}(n+1,n+2):=\cP_{2n}(n+1,n+2)\circ(0,0)\cup\cP_{2n}(n,n+1)\circ(1,0)\cup \cP_{2n+2}^* \enspace,
\end{equation}
where in the case $n=1$ we use the convention $\cP_2(2,3):=\emptyset$.

We now argue that the sets of paths $\cP_{2n+2}(k,k+1)$, $k=n+1,n+2,\ldots,2n+1$, defined in \eqref{eq:ind-step1-P} and \eqref{eq:ind-step2-P} satisfy the conditions~(i) and (ii). For every $k=n+3,\ldots,2n+1$, by the definition in \eqref{eq:ind-step1-P} and by induction, the paths in $\cP_{2n+2}(k,k+1)$ visit all vertices in the set
\begin{equation*}
  B_{2n}(k+1)\circ(0,0)\cup B_{2n}(k)\circ(1,0)\cup B_{2n}(k)\circ(0,1)\cup B_{2n}(k-1)\circ(1,1)=B_{2n+2}(k+1) \enspace,
\end{equation*}
and the only vertices not visited in the set $B_{2n+2}(k)$ are exactly the elements in the set
\begin{multline*}
  S(\cP_{2n}(k-1,k))\circ(0,0)\cup S(\cP_{2n}(k-2,k-1))\circ(1,0) \\
  \cup S(\cP_{2n}(k-2,k-1))\circ(0,1)\cup S(\cP_{2n}(k-3,k-2))\circ(1,1) \enspace.
\end{multline*}
As for those $k$ the set of paths $\cP_{2n+2}(k-1,k)$ in the layer below is also defined via \eqref{eq:ind-step1-P}, this set is equal to $S(\cP_{2n+2}(k-1,k))$, proving that $\cP_{2n+2}(k,k+1)$ indeed satisfies condition~(ii).

By the definition in \eqref{eq:ind-step1-P} and by induction, the paths in the set $\cP_{2n+2}(n+2,n+3)$ visit all vertices in the set $B_{2n+2}(n+3)$, and the only vertices not visited in the set $B_{2n+2}(n+2)$ are exactly the elements in the set
\begin{equation*}
  S(\cP_{2n}(n+1,n+2))\circ(0,0)\cup S(\cP_{2n}(n,n+1))\circ(1,0)\cup S(\cP_{2n}(n,n+1))\circ(0,1) \enspace.
\end{equation*}
By the definition in \eqref{eq:ind-step2-P} and by \eqref{eq:new-paths-S} this set is equal to $S(\cP_{2n+2}(n+1,n+2))$, proving that $\cP_{2n+2}(n+2,n+3)$ indeed satisfies condition~(ii).

It remains to show that the set $\cP_{2n+2}(n+1,n+2)$ satisfies condition~(i). This follows directly from the definitions in \eqref{eq:new-paths} and \eqref{eq:ind-step2-P} and by induction, using that the paths in $\cC_{2n+1}^-\circ(1)$ visit \emph{all} vertices in the middle layer of $Q_{2n+1}\circ(1)$ (recall that those paths were obtained from a 2-factor in this graph), and that the only vertices in $Q_{2n}(n+1,n+2)\circ(0,0)$ not visited by the paths in $\cP_{2n}(n+1,n+2)\circ(0,0)$ are exactly the first vertices of the paths $\cP_{2n+2}^*$ (see~\eqref{eq:new-paths-F}).

\subsection{Dependence on the parameter sequence}
\label{sec:dependence-alpha}

It follows inductively from our construction that for all $k=n,\ldots,2n-1$, the set of paths $\cP_{2n}(k,k+1)$ depends on all parameters $\alpha_2,\alpha_4,\ldots,\alpha_{2(2n-1-k)}$, and that the 2-factor $\cC_{2n+1}$ defined in \eqref{eq:2-factor} depends on all parameters $\alpha_2,\alpha_4,\ldots,\alpha_{2n}$.

Even though the paths in the sets $\cP_{2n}(k,k+1)$ depend on the parameter sequence $(\alpha_{2i})_{i\geq 1}$, it follows from Lemma~\ref{lemma:FL-invariant} that the sets of first, second and last vertices of those paths do not depend on the sequence $(\alpha_{2i})_{i\geq 1}$. In particular, the number of paths in the sets $\cP_{2n}(k,k+1)$ is independent of $(\alpha_{2i})_{i\geq 1}$ (those numbers are already fixed by the conditions~(i) and (ii) from Section~\ref{sec:construction} and the cardinalities of the sets $B_{2n}(k)$, $k=n,n+1,\ldots,2n$). Note moreover that the pairs $(F(P),S(P))$ for all paths $P\in\cP_{2n}(k,k+1)$ are the same regardless of the sequence $(\alpha_{2i})_{i\geq 1}$ (which last vertex $L(P)$ from the set of all last vertices belongs to this path does of course depend on the chosen parameter sequence).

As $\alpha_{2i}\in\{0,1\}^{i-1}$, our construction yields at most $\prod_{i=1}^{n} 2^{i-1}=2^{\binom{n}{2}}$ different 2-factors in the middle layer of $Q_{2n+1}$. It can be shown that all these 2-factors are indeed different subgraphs of $Q_{2n+1}(n,n+1)$ (see \cite[Theorem~6]{muetze-weber:12}).
However, as mentioned in Section~\ref{sec:outline-2factor}, only few of them seem to be amenable to theoretical analysis. Essentially only the parameter choices in \eqref{eq:2-factor-0} and \eqref{eq:2-factor-1} yield a 2-factor with a well-understood cycle structure. We shall see in Section~\ref{sec:structure-2-factor} that these two 2-factors are actually intimately related --- in particular, they have the same number of cycles.

\section{Construction of flippable pairs and proofs of Theorem~\texorpdfstring{\ref{thm:main1}}{1} and \texorpdfstring{\ref{thm:main2}}{2}}
\label{sec:flipp}

In this section we introduce the concept of flippable pairs outlined in Section~\ref{sec:outline-flipp}, show how flippable pairs can be constructed inductively along the lines of the construction of 2-factors presented in the previous section, and finally show how they can be used to prove Theorem~\ref{thm:main1} and \ref{thm:main2}. These proofs rely on two propositions (Proposition~\ref{prop:main1} and \ref{prop:main2} below) that will be proved in later sections of this paper.

\textit{Flippable pairs.}
Let $\cP$ be a set of disjoint oriented paths in a graph $G$, and let $P_1,P_1',\ldots,P_\ell,P_\ell'$ be pairwise different paths from $\cP$.
We call $\cX=\{(P_1,P_1'),\ldots,(P_\ell,P_\ell')\}$ \emph{a set of flippable pairs for $\cP$ in $G$}, if for every $i=1,\ldots,\ell$, there are two oriented paths $R_i$ and $R_i'$ in $G$ such that $V(P_i)\cup V(P_i')=V(R_i)\cup V(R_i')$ and such that $F(P_i)=F(R_i)$, $F(P_i')=F(R_i')$, $L(P_i)=L(R_i')$ and $L(P_i')=L(R_i)$.
The first condition states that the paths $R_i$ and $R_i'$ together visit the same vertices as $P_i$ and $P_i'$, and the second condition states that the matching between first and last vertices of the pairs $(P_i,P_i')$ and $(R_i,R_i')$ is interchanged/flipped. Note that $R_i$ and $R_i'$ are \emph{not} contained in $\cP$.
We call pairs of paths $(R_1,R_1'),\ldots,(R_\ell,R_\ell')$ satisfying these conditions \emph{flipped pairs} corresponding to $\cX$.

We state the following simple and general observation for further reference. For any graph $G$ and any subset $U\seq V(G)$ we denote by $G[U]$ the subgraph of $G$ induced by the vertices in $U$.

\begin{lemma}
\label{lemma:flipp-propagation}
Let $G$ be a graph, and let $U$ and $U'$ be disjoint subsets of $V(G)$. Furthermore, let $\cP$ and $\cP'$ be sets of disjoint oriented paths in $G[U]$ and $G[U']$, and let $\cX$ and $\cX'$ be sets of flippable pairs for $\cP$ and $\cP'$, respectively. Then $\cX\cup\cX'$ is a set of flippable pairs for $\cP\cup\cP'$ in $G$.
\end{lemma}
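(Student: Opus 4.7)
The plan is to verify directly from the definition of a set of flippable pairs. A set $\cX \cup \cX'$ is a set of flippable pairs for $\cP \cup \cP'$ in $G$ precisely when (a)~the paths appearing in the pairs are pairwise different elements of $\cP \cup \cP'$, and (b)~for each pair there exist corresponding flipped paths in $G$ satisfying the vertex-equality and the interchanged-endpoint conditions. Both (a) and (b) follow essentially for free from the assumption that $U$ and $U'$ are disjoint.

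For (a), observe that every path $P \in \cP$ has $V(P) \subseteq U$ while every path $P' \in \cP'$ has $V(P') \subseteq U'$, so the disjointness $U \cap U' = \emptyset$ forces $\cP \cap \cP' = \emptyset$ (a single path cannot have all its vertices in both $U$ and $U'$ unless it is empty, and paths in our setting have at least one vertex). Hence the paths used by pairs in $\cX$ are disjoint from those used by pairs in $\cX'$, and combined with the internal pairwise-distinctness guaranteed by the hypothesis that $\cX$ and $\cX'$ are each sets of flippable pairs, we conclude that all paths appearing in pairs of $\cX \cup \cX'$ are pairwise different elements of $\cP \cup \cP'$.

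For (b), take any pair in $\cX \cup \cX'$. If it lies in $\cX$, then by hypothesis the flipped paths $R$ and $R'$ live in $G[U]$, which is a subgraph of $G$, and they already satisfy the vertex-union identity together with the flipped matching of first and last vertices; so the same pair $(R, R')$ witnesses the flippable property in $G$. The symmetric argument with $\cX'$ and $G[U']$ handles pairs from $\cX'$. No interaction between the two sides can occur because $U$ and $U'$ are disjoint, so no flipped pair from one side can accidentally interfere with the other.

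There is no real obstacle here: the lemma is a bookkeeping statement that the notion of flippable pair is preserved under disjoint union, and the only thing to watch is that one unpacks the definition carefully and invokes $U \cap U' = \emptyset$ in both the distinctness check and the localization of the flipped paths. I would present the argument in a few lines without introducing additional notation.
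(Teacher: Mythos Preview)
Your proof is correct and matches the paper's treatment: the paper states this lemma as a ``simple and general observation'' without giving any proof, so your direct verification from the definition is exactly what is intended.
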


Consider the set of paths $\cP_{2n}(n,n+1)$ and the 2-factor $\cC_{2n+1}$ defined in Section~\ref{sec:construction} for an arbitrary parameter sequence $(\alpha_{2i})_{i\geq 1}$, $\alpha_{2i}\in\{0,1\}^{i-1}$. Suppose we are given a set of flippable pairs $\cX=\{(P_1,P_1'),\ldots,(P_\ell,P_\ell')\}$ for $\cP_{2n}(n,n+1)$ in the graph $Q_{2n}(n,n+1)$.
We can think of replacing the paths $P_i\circ(0)$ and $P_i'\circ(0)$ in $\cC_{2n+1}$ (recall \eqref{eq:2-factor}) by corresponding flipped paths $R_i\circ(0)$ and $R_i'\circ(0)$ as a flipping operation (connecting the first and last vertices of these paths the other way, see the bottom left of Figure~\ref{fig:idea}). As this flipping operation can be performed independently for each flippable pair, we obtain in total $2^\ell$ different 2-factors from the basic 2-factor $\cC_{2n+1}$ and very precise \emph{local} control over them.
As mentioned before, this is in stark contrast to what happens when varying the parameter $\alpha_{2n}$ of the isomorphism $f_{\alpha_{2n}}$ in our basic construction: Varying $\alpha_{2n}$ in \eqref{eq:2-factor} affects many of the paths from $f_{\alpha_{2n}}(\cP_{2n}(n,n+1))$ simultaneously und therefore affects the resulting 2-factor $\cC_{2n+1}$ \emph{globally} (in a way that is hard to control).

\subsection{The graph \texorpdfstring{$\cG(\cC_{2n+1},\cX)$}{G(C2n+1,X)}}
\label{sec:gCX}

These insights motivate us to define a directed multigraph $\cG(\cC_{2n+1},\cX)$ as follows: The nodes of $\cG(\cC_{2n+1},\cX)$ are the cycles of $\cC_{2n+1}$. For each flippable pair $(P,P')\in\cX$, we consider the cycles $C,C'\in\cC_{2n+1}$ that contain the paths $P\circ (0)$ and $P'\circ(0)$, respectively (recall \eqref{eq:2-factor}), and add a directed edge from $C$ to $C'$ to the graph $\cG(\cC_{2n+1},\cX)$. Note that this graph may have multiple edges and/or loops.
The definition of the graph $\cG(\cC_{2n+1},\cX)$ is illustrated at the bottom of Figure~\ref{fig:idea}.

The following two crucial lemmas reduce the problem of proving that the middle layer graph has a Hamilton cycle (or many Hamilton cycles) to the problem of proving that $\cG(\cC_{2n+1},\cX)$ is connected (or has many spanning trees), which is considerably easier.

As customary, we call a directed (multi)graph \emph{weakly} connected, if replacing all directed edges by undirected edges yields a connected (multi)graph.

\begin{lemma}
\label{lemma:GCS-connected}
For any $n\geq 1$, if the graph $\cG(\cC_{2n+1},\cX)$ is weakly connected, then the middle layer graph $Q_{2n+1}(n,n+1)$ has a Hamilton cycle.
\end{lemma}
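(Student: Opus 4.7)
The plan is to fix a spanning tree $T$ of $\cG(\cC_{2n+1},\cX)$, perform the flips encoded by its edges on the 2-factor $\cC_{2n+1}$, and argue that every such flip reduces the number of cycles by exactly one, so that after the $m-1$ flips (where $m:=|\cC_{2n+1}|$) only a single cycle remains, which will be the desired Hamilton cycle of $Q_{2n+1}(n,n+1)$.

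More concretely, since $\cG(\cC_{2n+1},\cX)$ is weakly connected, its underlying undirected multigraph admits a spanning tree $T$ with $m-1$ edges, which I enumerate as $e_1,\ldots,e_{m-1}$. Each $e_i$ corresponds to a flippable pair $(P_i,P_i')\in\cX$ together with a flipped pair $(R_i,R_i')$. Define $\cC^T$ to be the 2-regular spanning subgraph of $Q_{2n+1}(n,n+1)$ obtained from $\cC_{2n+1}$ by simultaneously replacing the edges of $P_i\circ(0)$ and $P_i'\circ(0)$ with those of $R_i\circ(0)$ and $R_i'\circ(0)$ for $i=1,\ldots,m-1$. The definition of flippable pairs requires the $2(m-1)$ paths $P_i,P_i'$ to be pairwise distinct, so these edits do not conflict, and the endpoint conditions $V(P_i)\cup V(P_i')=V(R_i)\cup V(R_i')$, $F(P_i)=F(R_i)$, $F(P_i')=F(R_i')$, $L(P_i)=L(R_i')$, $L(P_i')=L(R_i)$ ensure that $\cC^T$ is 2-regular and spans the same vertex set as $\cC_{2n+1}$.

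To show that $\cC^T$ consists of a single cycle, I would establish the following invariant by induction on $k$: the 2-regular graph $\cC^{F_k}$ obtained by performing only the flips for $F_k:=\{e_1,\ldots,e_k\}$ has cycles in bijection with the connected components of the spanning subgraph $(V(\cG),F_k)$ of $T$. The case $k=0$ is just the definition of $\cG(\cC_{2n+1},\cX)$. For the inductive step, $T$ being a tree means $e_{k+1}$ connects two different components of $(V(\cG),F_k)$, so the subpaths $P_{k+1}\circ(0)$ and $P_{k+1}'\circ(0)$---which are still present in $\cC^{F_k}$ since the paths participating in $\cX$ are pairwise distinct---lie in two \emph{different} cycles $C$ and $C'$. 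Deleting these two subpaths leaves $C\cup C'$ as two vertex-disjoint paths, and inserting the flipped subpaths $R_{k+1}\circ(0)$ and $R_{k+1}'\circ(0)$ reconnects their four endpoints crosswise into a single longer cycle, merging $C$ and $C'$ as claimed. Applied at $k=m-1$, the invariant shows that $\cC^T$ has exactly one cycle, which is then a Hamilton cycle of $Q_{2n+1}(n,n+1)$. The only nontrivial input is this elementary crosswise-merge observation; the rest is routine bookkeeping using the acyclicity of $T$ and the pairwise distinctness of paths in flippable pairs.
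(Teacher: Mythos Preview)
Your proposal is correct and follows essentially the same approach as the paper: fix a spanning tree of $\cG(\cC_{2n+1},\cX)$, perform the flips corresponding to its edges, and use that each such flip merges two distinct cycles into one. The paper states this more tersely (asserting directly that all cycles in $V(T)$ become a single cycle), whereas you spell out the inductive merge argument and the crosswise-reconnection step; one minor remark is that your invariant should really say that the cycle of $\cC^{F_k}$ containing an unflipped $P\circ(0)$ is the one corresponding to the component of $(V(\cG),F_k)$ containing the original cycle of $\cC_{2n+1}$ through $P\circ(0)$, but this is exactly what your inductive step establishes.
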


\begin{proof}
Let $\cP_{2n}(n,n+1)$ be the set of paths defined in Section~\ref{sec:construction}. Furthermore, let $(P_1,P_1'),\ldots,(P_\ell,P_\ell')$ be the flippable pairs in $\cX$ for $\cP_{2n}(n,n+1)$, and let $(R_1,R_1'),\ldots,(R_\ell,R_\ell')$ be corresponding flipped pairs (these are paths in $Q_{2n}(n,n+1)$).
We fix any (not necessarily spanning) subtree $T$ of the graph $\cG(\cC_{2n+1},\cX)$, and let $I_T\seq\{1,\ldots,\ell\}$ be the set of all indices $i$ of flippable pairs $(P_i,P_i')$ that correspond to edges of $T$.
We define $\cP_T:=(\cP_{2n}(n,n+1)\setminus\{P_i,P_i'\mid i\in I_T\})\cup \{R_i,R_i'\mid i\in I_T\}$.
Observe that by the definition of flippable pairs and by the structure of the 2-factor $\cC_{2n+1}$ defined in \eqref{eq:2-factor}, the graph
\begin{equation}
\label{eq:2-factor-x}
  \cC_T := \cP_T\circ (0)\cup f_{\alpha_{2n}}(\cP_{2n}(n,n+1))\circ (1) \cup M_{2n+1}^{FL}
\end{equation}
is a 2-factor in the middle layer graph $Q_{2n+1}(n,n+1)$ whose cycle structure differs from the cycle structure of $\cC_{2n+1}$ only in that all cycles in $V(T)$ (the nodes of $T$ correspond to cycles of $\cC_{2n+1}$) are joined to a single cycle (all other cycles are exactly the same).
In particular, if $T$ is a spanning tree of $\cG(\cC_{2n+1},\cX)$, then $\cC_T$ is a Hamilton cycle of $Q_{2n+1}(n,n+1)$.
\end{proof}

\begin{lemma}
\label{lemma:GCS-spanning}
For any $n\geq 1$, if the graph $\cG(\cC_{2n+1},\cX)$ has $t$ different spanning trees, then the middle layer graph $Q_{2n+1}(n,n+1)$ has at least $t$ different Hamilton cycles.
\end{lemma}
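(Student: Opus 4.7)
The plan is to upgrade the construction from the proof of Lemma~\ref{lemma:GCS-connected} into an injection from the set of spanning trees of $\cG(\cC_{2n+1},\cX)$ to the set of Hamilton cycles of $Q_{2n+1}(n,n+1)$. For every spanning tree $T$, the 2-factor $\cC_T$ defined in~\eqref{eq:2-factor-x} is a Hamilton cycle, so the task reduces to showing that two distinct spanning trees $T\ne T'$ produce distinct Hamilton cycles $\cC_T\ne\cC_{T'}$.

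To see this, first note that two of the three components in~\eqref{eq:2-factor-x}, namely $f_{\alpha_{2n}}(\cP_{2n}(n,n+1))\circ(1)$ and $M_{2n+1}^{FL}$, are independent of $T$. Hence the Hamilton cycle $\cC_T$ determines the path collection $\cP_T\circ(0)$, and therefore $\cP_T$ itself. Since by construction the edges of $\cG(\cC_{2n+1},\cX)$ are in bijection with the flippable pairs in $\cX$, the spanning tree $T$ is, in turn, determined by the index set $I_T\seq\{1,\ldots,\ell\}$ of its edges. It therefore suffices to argue that $I_T$ can be read off from $\cP_T$.

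For this I would use the defining property of a flippable pair. For each $i\in\{1,\ldots,\ell\}$, either $\{P_i,P_i'\}\seq\cP_T$ (when $i\notin I_T$) or $\{R_i,R_i'\}\seq\cP_T$ (when $i\in I_T$). Viewed as edge sets in $Q_{2n}(n,n+1)$, both alternatives produce a disjoint union of two paths on $V(P_i)\cup V(P_i')$, but with different pairs of endpoints: $\{F(P_i),L(P_i)\}$ together with $\{F(P_i'),L(P_i')\}$ in the unflipped case versus $\{F(P_i),L(P_i')\}$ and $\{F(P_i'),L(P_i)\}$ in the flipped case. These pairs are distinct because the dangling paths $P_i\ne P_i'$ from $\cP_{2n}(n,n+1)$ have distinct last vertices (being distinct disjoint paths). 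Since paths associated with different flippable pairs are vertex-disjoint, each bit of $I_T$ can be decoded locally from $\cP_T$, giving the desired injection; the claim on the number of Hamilton cycles follows. The only real subtlety — making sure that the flip of a pair is locally detectable from the resulting edge set — is handled precisely by the endpoint-matching clause in the definition of a flippable pair.
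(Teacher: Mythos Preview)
Your argument is correct and follows the same route as the paper's proof: the paper simply asserts that different subtrees $T\neq T'$ of $\cG(\cC_{2n+1},\cX)$ yield different 2-factors $\cC_T\neq\cC_{T'}$ (hence different Hamilton cycles when $T,T'$ are spanning), and you have supplied exactly the missing justification for this assertion by observing that the $(0)$-part of $\cC_T$ recovers $\cP_T$, and that the endpoint-pairing clause in the definition of flippable pairs lets one decode each bit of $I_T$ locally from $\cP_T$.
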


As in the case of connectedness, for spanning trees the direction of edges is also irrelevant for us.

\begin{proof}
The proof is a straightforward extension of the proof of Lemma~\ref{lemma:GCS-connected} and follows by observing that if $T$ and $T'$ are different subtrees of $\cG(\cC_{2n+1},\cX)$, then the 2-factors $\cC_T$ and $\cC_{T'}$ defined in \eqref{eq:2-factor-x} are different subgraphs of $Q_{2n+1}(n,n+1)$ (in particular, different spanning trees yield different Hamilton cycles).
\end{proof}

In the following we show how to construct a set of flippable pairs $\cX$ such that the resulting graph $\cG(\cC_{2n+1},\cX)$ satisfies the preconditions of Lemma~\ref{lemma:GCS-connected} and \ref{lemma:GCS-spanning} (with $t=2^{2^{\Omega(n)}}$). So after all, flippable pairs are not just a void theoretical concept, but they are very useful for proving Hamiltonicity results about the middle layer graph.

\subsection{Construction of flippable pairs}
\label{sec:construction-flipp}

Let $\cP_{2n}(k,k+1)$ be the sets of paths defined in Section~\ref{sec:construction} for an arbitrary parameter sequence $(\alpha_{2i})_{i\geq 1}$, $\alpha_{2i}\in\{0,1\}^{i-1}$.
In the following we show how to inductively construct a set of flippable pairs $\cX_{2n}(k,k+1)$ for the set $\cP_{2n}(k,k+1)$ for all $n\geq 2$ and all $k=n,n+1,\ldots,2n-1$. This construction arises very naturally from the inductive construction of the sets $\cP_{2n}(k,k+1)$ and is based on Lemma~\ref{lemma:flipp-propagation}. 

\textbf{Induction basis $n=2$ ($Q_4$):}
Consider the set of paths $\cP_4(2,3)=\{P,P'\}$ in $Q_4(2,3)$ with
\begin{subequations}
\label{eq:ind-base-flipp}
\begin{align}
  P &:= ((1,1,0,0),(1,1,0,1),(0,1,0,1),(0,1,1,1),(0,0,1,1),(1,0,1,1),(1,0,0,1)) \enspace, \\
  P' &:= ((1,0,1,0),(1,1,1,0),(0,1,1,0)) \enspace,
\end{align}
arising from the basic inductive construction after one step (as $\alpha_2=()$ this step does not involve any parameter choices yet).
The set
\begin{equation}
  \cX_4(2,3):=\{(P,P')\}
\end{equation}
is a set of flippable pairs for $\cP_4(2,3)$, which can be seen by considering the flipped paths $(R,R')$ in $Q_4(2,3)$ defined by
\begin{align*}
  R &:= ((1,1,0,0),(1,1,1,0),(0,1,1,0)) \enspace, \\
  R' &:= ((1,0,1,0),(1,0,1,1),(0,0,1,1),(0,1,1,1),(0,1,0,1),(1,1,0,1),(1,0,0,1)) \enspace.
\end{align*}
For completeness we also define
\begin{equation}
  \cX_4(3,4):=\emptyset \enspace,
\end{equation}
\end{subequations}
which is trivially a set of flippable pairs for the set of paths $\cP_4(3,4)$ in $Q_4(3,4)$ arising from the basic construction.

\textbf{Induction step $n\rightarrow n+1$ ($Q_{2n}\rightarrow Q_{2n+2}$), $n\geq 2$:}
Consider the sets of flippable pairs $\cX_{2n}(k,k+1)$, $k=n,n+1,\ldots,2n-1$, for the sets of paths $\cP_{2n}(k,k+1)$. In the following we describe how to use them to construct sets of flippable pairs $\cX_{2n+2}(k,k+1)$, $k=n+1,n+2,\ldots,2n+1$, for the sets $\cP_{2n+2}(k,k+1)$ in $Q_{2n+2}(k,k+1)$.

For all $k=n+2,\ldots,2n+1$ we define, in analogy to \eqref{eq:ind-step1-P},
\begin{equation}
\label{eq:ind-step1-flipp}
\begin{split}
  \cX_{2n+2}(k,k+1) &:= \cX_{2n}(k,k+1)\circ(0,0)\cup\cX_{2n}(k-1,k)\circ(1,0) \\
                    &\qquad \cup \cX_{2n}(k-1,k)\circ(0,1)\cup\cX_{2n}(k-2,k-1)\circ(1,1) \enspace,
\end{split}
\end{equation}
where we use the convention $\cX_{2n}(2n,2n+1):=\emptyset$ and $\cX_{2n}(2n+1,2n+2):=\emptyset$ to unify treatment of the sets of flippable pairs $\cX_{2n+2}(2n,2n+1)$ and $\cX_{2n+2}(2n+1,2n+2)$ in the two uppermost layers of $Q_{2n+2}$.
The sets of flippable pairs on the right hand side of \eqref{eq:ind-step1-flipp} clearly lie in four disjoint subgraphs of $Q_{2n+2}(k,k+1)$, so by Lemma~\ref{lemma:flipp-propagation} and by induction $\cX_{2n+2}(k,k+1)$ is indeed a set of flippable pairs for $\cP_{2n+2}(k,k+1)$.

To define the set $\cX_{2n+2}(n+1,n+2)$, we consider the oriented paths $\cP_{2n+2}^*$ defined in \eqref{eq:new-paths} (recall that these paths originate from splitting up the 2-factor $\cC_{2n+1}$ defined in \eqref{eq:2-factor}).
By \eqref{eq:2-factor} and \eqref{eq:new-paths}, every path $P^+\in\cP_{2n+2}^*$ has the following structure (see the right hand side of Figure~\ref{fig:construction}): There are two paths $P,\Phat\in\cP_{2n}(n,n+1)$ with $f_{\alpha_{2n}}(L(\Phat))=L(P)$ such that $P^+$ contains all edges except the first one from $P\circ(0,1)$ and all edges from $f_{\alpha_{2n}}(\Phat)\circ(1,1)$ ($P^+$ has three more edges, two from the matching $M_{2n+1}^{FL}\circ(1)$ and one from the matching $M_{2n+2}^S$).
By this structural property of paths from $\cP_{2n+2}^*$ and the fact that $f_{\alpha_{2n}}$ is an isomorphism between the graphs $Q_{2n}(n,n+1)$ and $Q_{2n}(n-1,n)$, the set
\begin{equation}
\label{eq:new-flipp-paths}
\begin{split}
  \cX_{2n+2}^* &:= \big\{(P^+,P^{+'}) \mid P^+,P^{+'}\in\cP_{2n+2}^* \text{ and there is a flippable pair } (\Phat,\Phat')\in\cX_{2n}(n,n+1) \\
               &\hspace{3cm} \text{ with } f_{\alpha_{2n}}(\Phat)\circ(1,1) \seq P^+ \;\wedge\; f_{\alpha_{2n}}(\Phat')\circ(1,1) \seq P^{+'} \big\} \enspace.
\end{split}
\end{equation}
is a set of flippable pairs for $\cP_{2n+2}^*$.
We now define, in analogy to \eqref{eq:ind-step2-P},
\begin{equation}
\label{eq:ind-step2-flipp}
  \cX_{2n+2}(n+1,n+2):=\cX_{2n}(n+1,n+2)\circ(0,0)\cup\cX_{2n}(n,n+1)\circ(1,0)\cup \cX_{2n+2}^* \enspace.
\end{equation}
The sets of flippable pairs on the right hand side of \eqref{eq:ind-step2-flipp} lie in three disjoint subgraphs of $Q_{2n+2}(n+1,n+2)$, so by Lemma~\ref{lemma:flipp-propagation} and by induction $\cX_{2n+2}(n+1,n+2)$ is indeed a set of flippable pairs for $\cP_{2n+2}(n+1,n+2)$.

In analogy to \eqref{eq:2-factor-0} and \eqref{eq:2-factor-1}, we define two sets of flippable pairs that are obtained from the general definitions above for two particular parameter sequences by
\begin{align}
  \cX_{2n}^0(k,k+1) &:= \cX_{2n}(k,k+1) \quad \text{for} \enspace \alpha_{2i}:=(0,0,\ldots,0)\seq \{0,1\}^{i-1}, \enspace i=1,\ldots,n-1 \enspace, \label{eq:flipp-0} \\
  \cX_{2n}^1(k,k+1) &:= \cX_{2n}(k,k+1) \quad \text{for} \enspace \alpha_{2i}:=(1,1,\ldots,1)\seq \{0,1\}^{i-1}, \enspace i=1,\ldots,n-1 \label{eq:flipp-1}
\end{align}
(the all-zero vector or the all-one vector is used as a parameter in each construction step, respectively).

\subsection{Proofs of Theorem~\texorpdfstring{\ref{thm:main1}}{1} and \texorpdfstring{\ref{thm:main2}}{2}}

The rest of this paper is devoted to proving the following two propositions:

\begin{proposition}
\label{prop:main1}
Let $\cC_{2n+1}^1$ be the 2-factor defined in \eqref{eq:2-factor-1} and $\cX_{2n}^1(n,n+1)$ the set of flippable pairs defined in \eqref{eq:flipp-1}. For any $n\geq 1$, the graph $\cG(\cC_{2n+1}^1,\cX_{2n}^1(n,n+1))$ defined in Section~\ref{sec:gCX} is weakly connected.
\end{proposition}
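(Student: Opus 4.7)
The proof will proceed by exploiting structural results to be established in the subsequent sections. Specifically, Section~\ref{sec:structure-2-factor} provides a bijection $\Phi$ between the cycles of $\cC_{2n+1}^1$ and a suitable family of combinatorial objects (namely, plane trees with $n$ edges, as indicated in the outline), and Section~\ref{sec:structure-flipp} characterizes each flippable pair $(P,P') \in \cX_{2n}^1(n,n+1)$ as an elementary leaf-move transformation between the trees $\Phi(C)$ and $\Phi(C')$, where $C$ and $C'$ are the cycles containing $P \circ (0)$ and $P' \circ (0)$. Granting these correspondences, Proposition~\ref{prop:main1} reduces to the purely combinatorial statement that the graph on plane trees whose edges encode the allowable leaf moves is connected.

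To prove this, the plan is to fix a canonical plane tree $T_0$ — for concreteness the right comb, in which every internal vertex has exactly one child — and show inductively that every plane tree with $n$ edges can be transformed into $T_0$ by a sequence of allowable leaf moves. Combined with the reversibility of leaf moves (obtained by swapping the roles of source and target vertex in the flipped pair), this yields connectivity. The base case $n=1$ is trivial. For the inductive step, the recursive structure of the flippable pair construction in \eqref{eq:ind-step2-flipp} and \eqref{eq:new-flipp-paths} should translate into a recursive decomposition of a plane tree into a principal subtree together with the remainder: moves within the subtree are supplied by the inductive hypothesis applied to $\cX_{2n}^1(n,n+1)$ embedded via the sub-cube operations, while the new pairs $\cX_{2n+2}^*$ arising from the middle-layer construction provide the moves that alter the global shape of the tree, allowing us to pry off leaves one by one and reassemble them into $T_0$.

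The main obstacle is the combinatorial analysis showing that the leaf moves arising from $\cX_{2n}^1(n,n+1)$ — which form a restricted subset of all conceivable leaf-move transformations — are nevertheless rich enough to reach the canonical tree. Extracting a clean description of \emph{which} leaves may be moved and \emph{where} they may be reattached from the definition of $\cX_{2n+2}^*$ in \eqref{eq:new-flipp-paths} will require careful tracking of the first, second, and last vertex structure of the paths in $\cP_{2n}(n,n+1)$ through the isomorphism $f_{\alpha_{2n}}$. I expect the specific all-ones parameter choice $\alpha_{2i} = (1,1,\ldots,1)$ defining $\cX_{2n}^1$ to be crucial here, since it gives $f_{\alpha_{2n}}$ a particularly symmetric form under which the induced correspondence between paths and tree leaves has a transparent combinatorial interpretation (e.g.\ always acting on the leftmost or rightmost leaf of the recursive decomposition), thereby making the inductive step tractable.
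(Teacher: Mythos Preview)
Your outline is essentially the paper's approach: reduce to a bijection between cycles of $\cC_{2n+1}^1$ and plane trees with $n$ edges, interpret flippable pairs as leaf moves, and prove the resulting graph on plane trees is connected by steering every tree to a canonical one. Two tactical points where the paper diverges from what you sketch are worth flagging.

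First, the connectivity argument in the paper is not by induction on $n$ but a direct argument for fixed $n$: it defines two explicit leaf moves $\tau_1$ (on thin leaves) and $\tau_2$ (on thick leaves whose clockwise-next leaf is thin) and shows any plane tree reaches the path $P_n$ by alternating these. Your proposed induction on $n$, feeding the recursive structure of $\cX_{2n+2}^*$ back into the tree picture, would require you to simultaneously understand how the cycle--tree bijection behaves under the inductive step of the 2-factor construction, which is more entangled than it looks; the direct argument on plane trees avoids this.

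Second, and more substantively, the bijection $\Phi$ you posit is not the naive one. The cycles of $\cC_{2n+1}^1$ do \emph{not} directly correspond to plane trees via tree rotation; rather, one first proves that the simpler 2-factor $\cC_{2n+1}^0$ has this rotation structure, and then exhibits an explicit recursively defined bijection $h$ on Dyck paths (involving the permutation $\pi_1$ coming from the all-ones parameter) satisfying $h\bullet g_0 = g_1\bullet h$, which transports the cycle structure of $\cC_{2n+1}^0$ to that of $\cC_{2n+1}^1$. Consequently, determining ``which leaves may be moved and where'' requires computing $h$-images of the flippable-pair first vertices, and the paper devotes two technical lemmas to showing that certain natural pairs of trees survive this passage through $h$. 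Your final paragraph correctly anticipates that the parameter choice is crucial, but the mechanism is this conjugating map $h$ rather than a direct symmetry of $f_{\alpha_{2n}}$.
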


\begin{proposition}
\label{prop:main2}
Let $\cC_{2n+1}^1$ be the 2-factor defined in \eqref{eq:2-factor-1} and $\cX_{2n}^1(n,n+1)$ the set of flippable pairs defined in \eqref{eq:flipp-1}. For any $n\geq 1$, the graph $\cG(\cC_{2n+1}^1,\cX_{2n}^1(n,n+1))$ defined in Section~\ref{sec:gCX} has at least $\frac{1}{4}2^{2^{\lfloor (n+1)/4\rfloor}}$ different spanning trees.
\end{proposition}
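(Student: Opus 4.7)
My plan is to bootstrap Proposition~\ref{prop:main1}, which gives weak connectedness of $\cG:=\cG(\cC_{2n+1}^1,\cX_{2n}^1(n,n+1))$ and hence at least one spanning tree $T_0$, into a doubly-exponential lower bound by exhibiting $k:=2^{\lfloor(n+1)/4\rfloor}$ \emph{independent binary toggles} in $\cG$. Here a toggle is a pair of edges $(e,e')$ of $\cG$ such that replacing $e$ by $e'$ (or vice versa) in a spanning tree containing one of them yields another spanning tree; the cleanest instances are pairs of parallel edges sharing a common pair of endpoint-nodes. If the $k$ toggles have pairwise disjoint endpoint-pairs, then for every subset $S\subseteq\{1,\ldots,k\}$ the tree obtained from $T_0$ by performing exactly the swaps indexed by $S$ is still a spanning tree, and distinct $S$ yield distinct trees, giving the required $2^k$ spanning trees (with the leading factor of $\tfrac{1}{4}$ absorbing the handful of small-$n$ cases where toggles are not yet available).

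To produce the $2^{\lfloor(n+1)/4\rfloor}$ toggles I would argue inductively, using the recursive construction of $\cX_{2n+2}^1$ in \eqref{eq:ind-step2-flipp}. The ``local'' parts $\cX_{2n}^1(n+1,n+2)\circ(0,0)$ and $\cX_{2n}^1(n,n+1)\circ(1,0)$ pass toggles up from smaller instances, while the starred set $\cX_{2n+2}^*$ defined in \eqref{eq:new-flipp-paths} should manufacture new toggles: two distinct paths $P_1^+,P_2^+\in\cP_{2n+2}^*$ whose ``upper halves'' both contain $f_{\alpha_{2n}}(\widehat P)\circ(1,1)$ for the \emph{same} lifted flippable pair $(\widehat P,\widehat P')\in\cX_{2n}^1(n,n+1)$, but which differ in the path $P\in\cP_{2n}(n,n+1)$ whose initial edge gets deleted in passing from $\cC_{2n+1}$ to $\cC_{2n+1}^-$, should yield two edges of $\cG(\cC_{2n+3}^1,\cX_{2n+2}^1(n+1,n+2))$ with a common pair of endpoint-cycles, i.e.\ a parallel-edge toggle. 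The bijection of Section~\ref{sec:structure-2-factor} between cycles of $\cC_{2n+1}^1$ and plane trees with $n$ edges, under which flippable pairs correspond to leaf-transplants, will be used throughout to track exactly which two cycles a given flippable pair connects. Iterating the resulting recursion $k_{n+4}\geq 2k_n$ on the toggle count then produces $k_n\geq 2^{\lfloor(n+1)/4\rfloor}$, hence the claimed bound.

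The main obstacle, I expect, is verifying the \emph{independence} of the toggles: I must ensure that the $k$ toggles produced across the various recursion levels act on pairwise disjoint pairs of cycles of $\cC_{2n+1}^1$, so that performing any subset of toggles in parallel yields a well-defined and distinct spanning tree, rather than merging cycles that have already been merged by another toggle and thereby collapsing distinct swap-patterns to the same tree. This requires a careful analysis of how the plane-tree label of a cycle distributes across the four quadrants of the cube decomposition in Figure~\ref{fig:cube-decomposition}, together with a counting argument showing that only a vanishing fraction of candidate toggles can accidentally share a cycle-endpoint. A secondary subtlety is the handling of the base case and the resulting leading constant $\tfrac{1}{4}$ in the statement, which I expect to fall out of a small case check for $n\leq 3$ or so.
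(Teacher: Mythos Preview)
Your proposal has a genuine gap in the mechanism you describe for producing parallel-edge toggles. You write that two distinct paths $P_1^+,P_2^+\in\cP_{2n+2}^*$ could both contain $f_{\alpha_{2n}}(\widehat P)\circ(1,1)$ for the \emph{same} $\widehat P$, differing only in the path $P$ whose initial edge was removed. But this cannot happen: by the construction around \eqref{eq:new-paths} and \eqref{eq:new-flipp-paths}, each $P^+\in\cP_{2n+2}^*$ is uniquely determined by the vertex $S(P)\circ(0,0)$ at which it starts, and the segment $f_{\alpha_{2n}}(\widehat P)\circ(1,1)$ it traverses is pinned down by the relation $f_{\alpha_{2n}}(L(\widehat P))=L(P)$. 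Since $f_{\alpha_{2n}}$ is a bijection on last vertices (Lemma~\ref{lemma:FL-invariant}), $\widehat P$ determines $P$ and hence $P^+$ uniquely. So your source of parallel edges does not exist, and with it the recursion $k_{n+4}\geq 2k_n$ is left completely unsupported. Even granting an abundance of parallel edges in $\cG$, you would still need to arrange that some spanning tree uses exactly one edge from each of $2^{\lfloor(n+1)/4\rfloor}$ disjoint parallel pairs, which you do not address.

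The paper's argument is rather different and does not use parallel edges at all. It first passes to the explicit spanning subgraph $\cG_n$ on plane trees with $n$ edges (Lemma~\ref{lemma:gn-gCX}), and then, inside $\cG_n$, constructs a concrete family of plane trees $T(x,y)$ encoding pairs of bitstrings $x,y\in\{0,1\}^k$ with $k=\lfloor(n-3)/4\rfloor$: a central star with $3+\ell$ edges (to kill symmetries) with two subtrees hanging off prescribed leaves, each subtree built from $k$ copies of the two rooted trees on two edges according to the bits of $x$ or $y$. A single bitflip in $x$ or $y$ corresponds to a $\tau_1$-move, so any Gray code $(x_i)_{1\le i\le 2^k}$ yields a chain of $2^k-1$ four-cycles on the nodes $T(x_i,x_i),T(x_i,x_{i+1}),T(x_{i+1},x_i),T(x_{i+1},x_{i+1})$, glued at consecutive diagonal nodes. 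Each four-cycle independently contributes a factor $4$ (choose any $3$ of its $4$ edges), giving $4^{2^k-1}=\tfrac14\,2^{2^{\lfloor(n+1)/4\rfloor}}$ spanning trees of this subgraph, and hence of $\cG_n$ by Lemma~\ref{lemma:Gn-connected}. The factor $\tfrac14$ arises naturally from the $-1$ in the exponent, not from a small-$n$ case analysis; the small cases $n\le 6$ are handled because then $t_n\le 1$.
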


With these propositions at hand, proving Theorem~\ref{thm:main1} and \ref{thm:main2} is straightforward.

\begin{proof}[Proof of Theorem~\ref{thm:main1}]
Combine Lemma~\ref{lemma:GCS-connected} and Proposition~\ref{prop:main1}.
\end{proof}

\begin{proof}[Proof of Theorem~\ref{thm:main2}]
Combine Lemma~\ref{lemma:GCS-spanning} and Proposition~\ref{prop:main2}.
\end{proof}

\begin{remark}
\label{remark:C0C1}
We remark that in contrast to $\cG(\cC_{2n+1}^1,\cX_{2n}^1(n,n+1))$, the graph $\cG(\cC_{2n+1}^0,\cX_{2n}^0(n,n+1))$ is not connected (so it is not useful for proving Hamiltonicity of the middle layer graph). However, the 2-factor $\cC_{2n+1}^0$ will be crucial in understanding the 2-factor $\cC_{2n+1}^1$ (as indicated before, both 2-factors are intimately related and have the same number of cycles).
\end{remark}

\section{Correctness of the construction}
\label{sec:correctness}

In this section we prove Lemma~\ref{lemma:FL-invariant}, thus showing that the construction of 2-factors described in Section~\ref{sec:construction} indeed works as claimed. Our proof strategy is as follows: After setting up some machinery that relates bitstrings to another set of combinatorial objects, namely lattice paths, we consider an abstract recursion over sets of bitstrings and show that the solutions of this recursion correspond to certain sets of lattice paths. It will then be easy to convince ourselves that the sets of first, second and last vertices of the oriented paths in the sets $\cP_{2n}(k,k+1)$ arising in our basic construction satisfy exactly this abstract recursion, which allows us to apply our knowledge from the world of lattice paths and to derive Lemma~\ref{lemma:FL-invariant}.

\subsection{Bitstrings and lattice paths}
\label{sec:lattice-paths}

We begin by introducing some terminology related to lattice paths in $\mathbb{Z}^2$, explain the relation of those combinatorial objects to bitstrings (these are the vertex labels of $Q_n$ and thus the objects our basic construction works with), and establish an invariance property of certain sets of lattice paths (Lemma~\ref{lemma:dyck-paths-invariant} below).

\textit{Lattice paths, Dyck paths.}
For any $n\geq 0$ we denote by $P_n$ the set of lattice paths in $\mathbb{Z}^2$ that start at $(0,0)$ and move $n$ steps, each of which changes the current coordinate by either $(+1,+1)$ or $(+1,-1)$. We refer to such a step as an upstep or downstep, respectively.
For any $n\geq 0$ and $k\geq 0$ we denote by $D_n(k)$ the set of lattice paths from $P_n$ that never move below the line $y=0$ and that have exactly $k$ upsteps.
Note that such a path has $n-k$ downsteps and therefore ends at $(n,2k-n)$.
For $n\geq 1$ we define $D_n^{>0}(k)\seq D_n(k)$ as the set of lattice paths that have no point of the form $(x,0)$, $1\leq x\leq n$, and $D_n^{=0}(k)\seq D_n(k)$ as the set of lattice paths that have at least one point of the form $(x,0)$, $1\leq x\leq n$. For $n=0$ we define $D_0^{=0}(0):=\{()\}$, where $()$ denotes the empty lattice path, and $D_0^{>0}(0):=\emptyset$.
We clearly have $D_n(k)=D_n^{=0}(k)\cup D_n^{>0}(k)$.
Furthermore, for $n\geq 1$ we let $D_n^-(k)$ denote the set of lattice paths from $P_n$ that move below the line $y=0$ exactly once and that have exactly $k$ upsteps (such a path has exactly one point of the form $(x,-1)$, $1\leq x\leq n$).
Depending on the values of $n$ and $k$ the sets of lattice paths we just defined might be empty. E.g., we have $D_{2n}^{>0}(n)=\emptyset$ and therefore $D_{2n}(n)=D_{2n}^{=0}(n)$.

Given two lattice paths $p$ and $q$, we denote by $p\circ q$ the lattice path obtained by gluing the first point of $q$ onto the last point of $p$ (the first point of $p\circ q$ is the same as the first point of $p$).
We sometimes identify a lattice path $p\in P_n$ with its step sequence $p=(p_1,\ldots,p_n)$, $p_i\in\{\upstep,\downstep\}$, where $p_i=\upstep$ if the $i$-th step of $p$ is an upstep and $p_i=\downstep$ if the $i$-th step of $p$ is a downstep.
Using these notations we clearly have for $*\in\{=0,-\}$ ($*$ is a symbolic placeholder, denoting the superscripts $=0$ and $-$), all $n\geq 1$ and all $k=n+2,\ldots,2n+1$ that
\begin{subequations} \label{eq:D-partitions}
\begin{align}
  D_{2n+2}^*(k) &= D_{2n}^*(k)\circ (\downstep,\downstep)
                 \cup D_{2n}^*(k-1)\circ (\upstep,\downstep) \notag \\
                 &\qquad \cup D_{2n}^*(k-1)\circ (\downstep,\upstep)
                               \cup D_{2n}^*(k-2)\circ (\upstep,\upstep) \enspace, \label{eq:D2np2-*-u-partition} \\
  D_{2n+2}^{>0}(k+1) &= D_{2n}^{>0}(k+1)\circ (\downstep,\downstep)
                 \cup D_{2n}^{>0}(k)\circ (\upstep,\downstep) \notag \\
                 &\qquad \cup D_{2n}^{>0}(k)\circ (\downstep,\upstep)
                               \cup D_{2n}^{>0}(k-1)\circ (\upstep,\upstep) \enspace. \label{eq:D2np2-g0-u-partition}
\end{align}
Similarly, for all $n\geq 1$ we have
\begin{align}
  D_{2n+2}^{=0}(n+1) &= \big(D_{2n}^{=0}(n+1)\cup D_{2n}^{>0}(n+1)\big) \circ (\downstep,\downstep) \cup
                             D_{2n}^{=0}(n)\circ (\upstep,\downstep) \enspace, \label{eq:D2np2-eq0-m-partition} \\
  D_{2n+2}^{>0}(n+2) &= D_{2n}^{>0}(n+2)\circ (\downstep,\downstep) \cup
                        D_{2n}^{>0}(n+1)\circ (\upstep,\downstep) \cup
                        D_{2n}^{>0}(n+1)\circ (\downstep,\upstep) \enspace, \label{eq:D2np2-g0-m-partition} \\
  D_{2n+2}^-(n+1) &= D_{2n}^-(n+1)\circ (\downstep,\downstep) \cup
                     D_{2n}^-(n)\circ (\upstep,\downstep) \cup
                     D_{2n}^{=0}(n)\circ (\downstep,\upstep) \enspace. \label{eq:D2np2-m-m-partition}
\end{align}
\end{subequations}
Note that all the unions in \eqref{eq:D-partitions} are disjoint and that some of the sets participating in the unions might be empty.

\textit{Bijection $\varphi$ between bitstrings and lattice paths.}
For any $x\in B_n=\{0,1\}^n$, $x=(x_1,\ldots,x_n)$, we define $\varphi(x)$ as the lattice path from $P_n$ whose $i$-th step is an upstep if $x_i=1$ and a downstep if $x_i=0$. Note that the step sequence of $\varphi(x)$ is obtained from $(x_1,\ldots,x_n)$ by replacing every $1$ by $\upstep$ and every $0$ by $\downstep$. This mapping is clearly a bijection between $B_n$ and $P_n$.

\textit{The mappings $\ol{\rev}$, $\pi_{\alpha_{2n}}$ and $f_{\alpha_{2n}}$ on lattice paths.}
Via the bijection $\varphi$, the operation $\ol{\rev}$ of reversing and inverting a bitstring can be extended naturally to lattice paths (recall \eqref{eq:biject-map}).
Note that $\ol{\rev}$ simply mirrors every lattice path from the set $P_{2n}$ with endpoint $(2n,0)$ along the axis $x=n$.
In a similar fashion we also extend the mappings $\pi_{\alpha_{2n}}$ and $f_{\alpha_{2n}}$, defined around \eqref{eq:f-alpha} as mappings on the set $B_{2n}$, to mappings on the set $P_{2n}$.
Note that $\pi_{\alpha_{2n}}$ swaps the order of any two adjacent steps $2i$ and $2i+1$, $i=1,\ldots,n-1$, of a given lattice path from $P_{2n}$, if and only if $\alpha_{2n}(i)=1$.

\begin{lemma} \label{lemma:dyck-paths-invariant}
For any $n\geq 1$ and any $\alpha_{2n}\in\{0,1\}^{n-1}$ the mapping $f_{\alpha_{2n}}$ defined in \eqref{eq:f-alpha}
 (viewed as a mapping $P_{2n}\rightarrow P_{2n}$) satisfies $f_{\alpha_{2n}}(D_{2n}^{=0}(n))=D_{2n}^{=0}(n)$ and $f_{\alpha_{2n}}(D_{2n}^-(n))=D_{2n}^-(n)$.
\end{lemma}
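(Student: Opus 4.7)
My plan is to decompose $f_{\alpha_{2n}}=\ol{\rev}\bullet\pi_{\alpha_{2n}}$ and to show that each of the two factors separately preserves both $D_{2n}^{=0}(n)$ and $D_{2n}^-(n)$. Throughout I identify a lattice path $p\in P_{2n}$ with its height sequence $(y_0,y_1,\ldots,y_{2n})$, where $y_i$ is the $y$-coordinate after the $i$-th step; in particular $y_i$ always has the same parity as $i$.

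First I would unwind $\ol{\rev}$ on heights: for $p\in P_{2n}$ with $y_{2n}=0$, the height sequence of $\ol{\rev}(p)$ is $(y_{2n},y_{2n-1},\ldots,y_0)$, so geometrically $\ol{\rev}$ just reflects the graph of $p$ across the vertical line $x=n$. Both the Dyck property and the ``exactly one dip at height $-1$'' property are manifestly invariant under such a reflection, so $\ol{\rev}$ fixes $D_{2n}^{=0}(n)$ and $D_{2n}^-(n)$ setwise.

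Next, $\pi_{\alpha_{2n}}$ factors into the product of the commuting, disjoint-support transpositions $\tau_i$ swapping positions $2i$ and $2i+1$ for those indices $i\in\{1,\ldots,n-1\}$ with $\alpha_{2n}(i)=1$. A single $\tau_i$ leaves every $y_j$ unchanged except $y_{2i}$, whose new value lies in $\{y_{2i-1}-1,\,y_{2i-1}+1\}$ (or is unchanged when $p_{2i}=p_{2i+1}$). For $D_{2n}^{=0}(n)$ the Dyck property gives $y_{2i-1}\ge 0$, and since $2i-1$ is odd it is in fact $\ge 1$, so the new $y_{2i}$ remains $\ge 0$. Hence each $\tau_i$, and thus $\pi_{\alpha_{2n}}$, preserves $D_{2n}^{=0}(n)$.

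The main obstacle is the case $D_{2n}^-(n)$, as a swap could in principle duplicate, shift, or deepen the unique height-$-1$ point. Let $a$ be the (odd) position of the dip of a path $p\in D_{2n}^-(n)$. The only potentially problematic index is $i$ with $2i-1=a$ (i.e.\ $2i=a+1$), because only then is $y_{2i-1}=-1$. For this $i$ a forcing argument shows that $\tau_i$ acts trivially on $p$: $y_a=-1$ and $y_{a+1}=0$ give $p_{a+1}=\upstep$, while $y_{a+2}$ (if defined) differs from $y_{a+1}=0$ by exactly $\pm 1$ and cannot equal $-1$ by uniqueness of the dip, so $y_{a+2}=1$ and $p_{a+2}=\upstep$ as well. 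For every other $i$ we have $2i-1\ne a$, so $y_{2i-1}$ is odd and $\ne -1$; combined with $y_{2i-1}\ge -1$ this yields $y_{2i-1}\ge 1$, and the Dyck-case computation shows the post-swap $y_{2i}\ge 0$. Hence no new height-$-1$ point is created and $\pi_{\alpha_{2n}}$ preserves $D_{2n}^-(n)$. Combining the two steps shows $f_{\alpha_{2n}}$ maps each of $D_{2n}^{=0}(n)$ and $D_{2n}^-(n)$ into itself, and bijectivity of $f_{\alpha_{2n}}$ on the finite set $P_{2n}$ upgrades the inclusions to equalities.
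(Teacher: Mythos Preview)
Your proof is correct and follows essentially the same approach as the paper's: you decompose $f_{\alpha_{2n}}=\ol{\rev}\bullet\pi_{\alpha_{2n}}$, dispose of $\ol{\rev}$ by the reflection observation, and handle $\pi_{\alpha_{2n}}$ via the parity of heights at odd versus even abscissas. The only cosmetic difference is that the paper phrases the key step as ``if steps $2i,2i+1$ form an up-down pair then $y_{2i}\ge 2$'' (otherwise two negative heights would arise), whereas you bound the post-swap height via $y_{2i-1}\ge 1$ and separately verify that the swap adjacent to the unique dip is trivial; these are two sides of the same coin.
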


Note that even though the sets $D_{2n}^{=0}(n)$ and $D_{2n}^-(n)$ are invariant under the mapping $f_{\alpha_{2n}}$, changing the parameter $\alpha_{2n}$ will of course change the images of certain lattice paths from those sets.

\begin{proof}
The lemma follows from \eqref{eq:f-alpha} by showing that each of the mappings $\overline{\rev}$ and $\pi_{\alpha_{2n}}$ (viewed as mappings $P_{2n}\rightarrow P_{2n}$) maps each of the sets $D_{2n}^{=0}(n)$ and $D_{2n}^-(n)$ onto itself.
For the mapping $\overline{\rev}$ this is trivial, as $\overline{\rev}$ simply mirrors every lattice path from the set $P_{2n}$ with endpoint $(2n,0)$ along the axis $x=n$.

Note that the permutation $\pi_{\alpha_{2n}}$ leaves the $y$-coordinates of a given lattice path from $P_{2n}$ at all \emph{odd} abscissas $x=1,3,\ldots,2n-1$ invariant, and decreases the $y$-coordinates at all \emph{even} abscissas $x=2i$, $i=1,\ldots,n-1$, by $-2$ if and only if $\alpha_{2n}(i)=1$ and the steps $2i$ and $2i+1$ of the path are an upstep and a downstep, respectively. This mapping clearly leaves the $y$-coordinates at the abscissas $x=0$ and $x=2n$ invariant as well.

Observe that for every lattice path from $D_{2n}^{=0}(n)$ or from $D_{2n}^-(n)$, the $y$-coordinates at all odd abscissas $x=1,3,\ldots,2n-1$ are odd, and the $y$-coordinates at all even abscissas $x=0,2,\ldots,2n$ are even (in particular, the abscissa where a lattice path from the set $D_{2n}^-(n)$ touches the line $y=-1$ is odd). This property implies that for any pair $2i$ and $2i+1$, $1\leq i\leq n-1$, of an upstep and a downstep on such a path, the point $(2i,y')$ on the path satisfies $y'\geq 2$ ($y'$ must be even, and if it were 0 or less, then this path would have at least two points with a negative $y$-coordinate). Using these observations and the above-mentioned properties how the permutation $\pi_{\alpha_{2n}}$ affects the $y$-coordinates at the odd and even abscissas, it follows that $\pi_{\alpha_{2n}}$ indeed maps each of the sets $D_{2n}^{=0}(n)$ and $D_{2n}^-(n)$ onto itself. This proves the lemma.
\end{proof}

\subsection{An abstract recursion}

In this section we define an abstract recursion over sets of bitstrings and show that the solutions of this recursion correspond to certain sets of lattice paths (Lemma~\ref{lemma:FSL-D-isomorphic} below).

For all $n\geq 1$ and all $k=n,n+1,\ldots,2n-1$ we define sets of bitstrings $F_{2n}(k,k+1)\seq B_{2n}(k)$, $S_{2n}(k,k+1)\seq B_{2n}(k+1)$ and $L_{2n}(k,k+1)\seq B_{2n}(k)$ recursively as follows:

For $n=1$ we define
\begin{equation} \label{eq:ind-base-FSL}
  F_2(1,2) := \{(1,0)\} \enspace, \quad
  S_2(1,2) := \{(1,1)\} \enspace, \quad
  L_2(1,2) := \{(0,1)\} \enspace.
\end{equation}

For any $n\geq 1$ and all $k=n+2,\ldots,2n+1$ we define
\begin{subequations} \label{eq:ind-step1-FSL}
\begin{align}
  F_{2n+2}(k,k+1) &:= F_{2n}(k,k+1)\circ(0,0)\cup F_{2n}(k-1,k)\circ(1,0) \notag \\
                  &\qquad \cup F_{2n}(k-1,k)\circ(0,1)\cup F_{2n}(k-2,k-1)\circ(1,1) \enspace, \label{eq:ind-step1-F} \\
  S_{2n+2}(k,k+1) &:= S_{2n}(k,k+1)\circ(0,0)\cup S_{2n}(k-1,k)\circ(1,0) \notag \\
                  &\qquad \cup S_{2n}(k-1,k)\circ(0,1)\cup S_{2n}(k-2,k-1)\circ(1,1) \enspace, \label{eq:ind-step1-S} \\
  L_{2n+2}(k,k+1) &:= L_{2n}(k,k+1)\circ(0,0)\cup L_{2n}(k-1,k)\circ(1,0) \notag \\
                  &\qquad \cup L_{2n}(k-1,k)\circ(0,1)\cup L_{2n}(k-2,k-1)\circ(1,1) \enspace, \label{eq:ind-step1-L}
\end{align}
\end{subequations}
where we use the convention $F_{2n}(2n,2n+1):=\emptyset$, $S_{2n}(2n,2n+1):=\emptyset$, $L_{2n}(2n,2n+1):=\emptyset$ and $F_{2n}(2n+1,2n+2):=\emptyset$, $S_{2n}(2n+1,2n+2):=\emptyset$, $L_{2n}(2n+1,2n+2):=\emptyset$.

Furthermore, for any $n\geq 1$ we define
\begin{subequations} \label{eq:ind-step2-FSL}
\begin{align}
  F_{2n+2}(n+1,n+2) &:= F_{2n}(n+1,n+2)\circ(0,0) \notag \\
                    &\qquad \cup F_{2n}(n,n+1)\circ(1,0)\cup S_{2n}(n,n+1)\circ(0,0) \enspace, \label{eq:ind-step2-F} \\
  S_{2n+2}(n+1,n+2) &:= S_{2n}(n+1,n+2)\circ(0,0) \notag \\
                    &\qquad \cup S_{2n}(n,n+1)\circ(1,0)\cup S_{2n}(n,n+1)\circ(0,1) \enspace, \label{eq:ind-step2-S} \\
  L_{2n+2}(n+1,n+2) &:= L_{2n}(n+1,n+2)\circ(0,0) \notag \\
                    &\qquad \cup L_{2n}(n,n+1)\circ(1,0)\cup F_{2n}(n,n+1)\circ(0,1) \enspace, \label{eq:ind-step2-L}
\end{align}
\end{subequations}
where in the case $n=1$ we use the convention $F_2(2,3):=\emptyset$, $S_2(2,3):=\emptyset$, $L_2(2,3):=\emptyset$.

\begin{lemma} \label{lemma:FSL-D-isomorphic}
For any $n\geq 1$ and all $k=n,n+1,\ldots,2n-1$ we have
\begin{subequations}
\begin{align}
  \varphi(F_{2n}(k,k+1)) &= D_{2n}^{=0}(k) \enspace, \label{eq:F-D-isomorphic} \\
  \varphi(S_{2n}(k,k+1)) &= D_{2n}^{>0}(k+1) \enspace, \label{eq:S-D-isomorphic} \\
  \varphi(L_{2n}(k,k+1)) &= D_{2n}^-(k) \enspace, \label{eq:L-D-isomorphic}
\end{align}
\end{subequations}
where the sets $F_{2n}(k,k+1)$, $S_{2n}(k,k+1)$ and $L_{2n}(k,k+1)$ are defined in \eqref{eq:ind-base-FSL}, \eqref{eq:ind-step1-FSL} and \eqref{eq:ind-step2-FSL}.
\end{lemma}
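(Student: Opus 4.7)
The proof is by induction on $n$. The base case $n=1$ is immediate from \eqref{eq:ind-base-FSL}: we have $\varphi((1,0)) = (\upstep,\downstep)$, the unique path in $D_2^{=0}(1)$; $\varphi((1,1)) = (\upstep,\upstep)$, the unique path in $D_2^{>0}(2)$; and $\varphi((0,1)) = (\downstep,\upstep)$, the unique path in $D_2^-(1)$. The key observation for the induction step is that $\varphi$ commutes with concatenation, so appending the two-bit suffixes $(0,0), (1,0), (0,1), (1,1)$ to a bitstring corresponds exactly to appending the two-step suffixes $(\downstep,\downstep), (\upstep,\downstep), (\downstep,\upstep), (\upstep,\upstep)$ to the associated lattice path.

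For the upper layers $k \in \{n+2, \ldots, 2n+1\}$, apply $\varphi$ to the recursions \eqref{eq:ind-step1-F}--\eqref{eq:ind-step1-L}, invoke the inductive hypothesis term by term, and compare with the four-term partitions \eqref{eq:D2np2-*-u-partition} (for $* \in \{=0,-\}$) and \eqref{eq:D2np2-g0-u-partition}; each bitstring term lines up with the correspondingly-suffixed lattice path term, giving the required identities. The empty-set conventions $F_{2n}(2n, 2n+1) = F_{2n}(2n+1, 2n+2) = \emptyset$ (and similarly for $S$, $L$) are matched by emptiness of $D_{2n}^{=0}(k), D_{2n}^{>0}(k+1), D_{2n}^-(k)$ for $k \in \{2n, 2n+1\}$, which holds because a length-$2n$ path with $2n$ upsteps is the strictly monotone all-upstep path (which lies in $D_{2n}^{>0}(2n)$ but not in the other three classes), and a length-$2n$ path with more than $2n$ upsteps does not exist.

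For the middle layer $k = n+1$, apply $\varphi$ to \eqref{eq:ind-step2-F}--\eqref{eq:ind-step2-L} and match against the three-term partitions \eqref{eq:D2np2-eq0-m-partition}--\eqref{eq:D2np2-m-m-partition}. The two matchings that are not purely syntactic are the following. In the $F$ recursion, the two summands $F_{2n}(n+1, n+2) \circ (0,0)$ and $S_{2n}(n, n+1) \circ (0,0)$ merge after applying $\varphi$ into $(D_{2n}^{=0}(n+1) \cup D_{2n}^{>0}(n+1)) \circ (\downstep, \downstep) = D_{2n}(n+1) \circ (\downstep, \downstep)$, which is the first term of \eqref{eq:D2np2-eq0-m-partition}. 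In the $L$ recursion, the summand $F_{2n}(n, n+1) \circ (0, 1)$ translates to $D_{2n}^{=0}(n) \circ (\downstep, \upstep)$: geometrically a path in $D_{2n}^{=0}(n)$ ends on $y=0$, so the appended $\downstep, \upstep$ produces exactly one dip to $y=-1$ at abscissa $2n+1$, yielding the third term of \eqref{eq:D2np2-m-m-partition}. The main obstacle is bookkeeping: one must track six sets through two separate recursions and five partition identities while correctly aligning all boundary conventions; beyond the two non-syntactic matchings singled out above, no combinatorial subtlety arises.
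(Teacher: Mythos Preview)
Your proof is correct and follows essentially the same approach as the paper: induction on $n$, with the base case verified directly from \eqref{eq:ind-base-FSL}, and the induction step split into the upper layers $k\in\{n+2,\ldots,2n+1\}$ (handled via \eqref{eq:ind-step1-FSL} and the partitions \eqref{eq:D2np2-*-u-partition}, \eqref{eq:D2np2-g0-u-partition}) and the middle layer $k=n+1$ (handled via \eqref{eq:ind-step2-FSL} and the partitions \eqref{eq:D2np2-eq0-m-partition}--\eqref{eq:D2np2-m-m-partition}). Your treatment is slightly more explicit than the paper's in spelling out the empty-set conventions and the geometric reason why $D_{2n}^{=0}(n)\circ(\downstep,\upstep)$ appears in the $L$ case, but the argument is the same.
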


Note that all unions in \eqref{eq:ind-step1-FSL} and \eqref{eq:ind-step2-FSL} are disjoint: This is obvious for the definitions in \eqref{eq:ind-step1-FSL}, \eqref{eq:ind-step2-S} and \eqref{eq:ind-step2-L}, as the two-bit strings attached to the sets participating in each of the unions are distinct. For the definition in \eqref{eq:ind-step2-F} this follows from Lemma~\ref{lemma:FSL-D-isomorphic}, as by \eqref{eq:F-D-isomorphic} and \eqref{eq:S-D-isomorphic} the sets $F_{2n}(n+1,n+2)$ and $S_{2n}(n,n+1)$ participating in the union correspond to the sets $D_{2n}^{=0}(n+1)$ and $D_{2n}^{>0}(n+1)$ and are therefore disjoint.

\begin{proof}
We argue by induction over $n$. The fact that all three claimed relations hold for $n=1$ follows immediately from \eqref{eq:ind-base-FSL}.
For the induction step let $n\geq 1$ be fixed. We prove that the statement of the lemma holds for $n+1$ assuming that it holds for $n$. We distinguish the cases $k=n+2,\ldots,2n+1$ and $k=n+1$.

For $k=n+2,\ldots,2n+1$ we have
\begin{equation*}
\begin{split}
  \varphi(F_{2n+2}(k,k+1)) &\eqBy{eq:ind-step1-F}
    \varphi(F_{2n}(k,k+1)\circ(0,0)) \cup
    \varphi(F_{2n}(k-1,k)\circ(1,0)) \\
    &\qquad \cup
    \varphi(F_{2n}(k-1,k)\circ(0,1)) \cup
    \varphi(F_{2n}(k-2,k-1)\circ(1,1)) \\
  &\eqBy{eq:F-D-isomorphic}
    D_{2n}^{=0}(k)\circ(\downstep,\downstep) \cup
    D_{2n}^{=0}(k-1)\circ(\upstep,\downstep) \\
    &\qquad \cup
    D_{2n}^{=0}(k-1)\circ(\downstep,\upstep) \cup
    D_{2n}^{=0}(k-2)\circ(\upstep,\upstep)
  \eqBy{eq:D2np2-*-u-partition} D_{2n+2}^{=0}(k) \enspace,
\end{split}
\end{equation*}
where we used the induction hypothesis in the second step.
The proof that also the last two relations stated in the lemma hold in this case goes along very similar lines, using \eqref{eq:ind-step1-S}, \eqref{eq:S-D-isomorphic} and \eqref{eq:D2np2-g0-u-partition} in the first, second and third step, or \eqref{eq:ind-step1-L}, \eqref{eq:L-D-isomorphic} and \eqref{eq:D2np2-*-u-partition}, respectively. We omit the details here.

For the case $k=n+1$ we obtain
\begin{equation*}
\begin{split}
  \varphi(& F_{2n+2}(n+1,n+2)) \\
  &\eqBy{eq:ind-step2-F} 
    \varphi(F_{2n}(n+1,n+2)\circ(0,0)) \cup
    \varphi(F_{2n}(n,n+1)\circ(1,0)) \cup
    \varphi(S_{2n}(n,n+1)\circ(0,0)) \\
  &\eqByM{\eqref{eq:F-D-isomorphic},\eqref{eq:S-D-isomorphic}}
    \big(D_{2n}^{=0}(n+1) \cup D_{2n}^{>0}(n+1)\big)\circ(\downstep,\downstep) \cup
    D_{2n}^{=0}(n)\circ(\upstep,\downstep)
  \eqBy{eq:D2np2-eq0-m-partition} D_{2n+2}^{=0}(n+1) \enspace,
\end{split}
\end{equation*}
where we used the induction hypothesis in the second step.
In a similar fashion we obtain
\begin{equation*}
\begin{split}
  \varphi(& S_{2n+2}(n+1,n+2)) \\
  &\eqBy{eq:ind-step2-S}
    \varphi(S_{2n}(n+1,n+2)\circ(0,0)) \cup
    \varphi(S_{2n}(n,n+1)\circ(1,0)) \cup
    \varphi(S_{2n}(n,n+1)\circ(0,1)) \\
  &\eqBy{eq:S-D-isomorphic}
    D_{2n}^{>0}(n+2)\circ(\downstep,\downstep) \cup
    D_{2n}^{>0}(n+1) \circ(\upstep,\downstep) \cup
    D_{2n}^{>0}(n+1)\circ(\downstep,\upstep)
  \eqBy{eq:D2np2-g0-m-partition} D_{2n+2}^{>0}(n+2)
\end{split}
\end{equation*}
and
\begin{equation*}
\begin{split}
  \varphi(& L_{2n+2}(n+1,n+2)) \\
  &\eqBy{eq:ind-step2-L}
    \varphi(L_{2n}(n+1,n+2)\circ(0,0)) \cup
    \varphi(L_{2n}(n,n+1)\circ(1,0)) \cup
    \varphi(F_{2n}(n,n+1)\circ(0,1)) \\
  &\eqByM{\eqref{eq:F-D-isomorphic},\eqref{eq:L-D-isomorphic}}
    D_{2n}^-(n+1)\circ(\downstep,\downstep) \cup
    D_{2n}^-(n)\circ(\upstep,\downstep) \cup
    D_{2n}^{=0}(n)\circ(\downstep,\upstep)
  \eqBy{eq:D2np2-m-m-partition} D_{2n+2}^-(n+1) \enspace.
\end{split}
\end{equation*}
This completes the proof.
\end{proof}

\subsection{Proof of Lemma~\texorpdfstring{\ref{lemma:FL-invariant}}{3}}

We are now ready to complete the proof of Lemma~\ref{lemma:FL-invariant}, thus showing that the construction of 2-factors described in Section~\ref{sec:construction} works as claimed.

We introduce the abbreviations
\begin{subequations} \label{eq:FSL-abbreviation}
\begin{align}
  F_{2n}(k,k+1) &:= F(\cP_{2n}(k,k+1)) \enspace, \\
  S_{2n}(k,k+1) &:= S(\cP_{2n}(k,k+1)) \enspace, \\
  L_{2n}(k,k+1) &:= L(\cP_{2n}(k,k+1))
\end{align}
\end{subequations}
for the sets of first, second and last vertices of the oriented paths in the sets $\cP_{2n}(k,k+1)$ arising in our construction.

\begin{proof}[Proof of Lemma~\ref{lemma:FL-invariant}]
Observe that the sets $F_{2n}(k,k+1)$, $S_{2n}(k,k+1)$ and $L_{2n}(k,k+1)$ defined in \eqref{eq:FSL-abbreviation} satisfy exactly the recursive relations in \eqref{eq:ind-base-FSL}, \eqref{eq:ind-step1-FSL} and \eqref{eq:ind-step2-FSL}: This can be seen by comparing \eqref{eq:ind-base-P} with \eqref{eq:ind-base-FSL}, \eqref{eq:ind-step1-P} with \eqref{eq:ind-step1-FSL} and finally \eqref{eq:ind-step2-P} with \eqref{eq:ind-step2-FSL}, in the last step also using \eqref{eq:new-paths-FSL}.
We may thus apply Lemma~\ref{lemma:FSL-D-isomorphic}, and using the relations \eqref{eq:F-D-isomorphic} and \eqref{eq:L-D-isomorphic} for $k=n$, we obtain that proving \eqref{eq:FL-invariant} is equivalent to proving that the mapping $f_{\alpha_{2n}}$ defined in \eqref{eq:f-alpha} satisfies $f_{\alpha_{2n}}(D_{2n}^{=0}(n))=D_{2n}^{=0}(n)$ and $f_{\alpha_{2n}}(D_{2n}^-(n))=D_{2n}^-(n)$, which is exactly the assertion of Lemma~\ref{lemma:dyck-paths-invariant}.
\end{proof}

\begin{remark} \label{remark:use-lemma-FSL-D}
Using the abbreviations defined in \eqref{eq:FSL-abbreviation} we may and will from now on use Lemma~\ref{lemma:FSL-D-isomorphic} as a statement about the sets of first, second and last vertices of the oriented paths in the sets $\cP_{2n}(k,k+1)$ arising in our construction (rather than as a statement about abstractly defined sets of bitstrings).
\end{remark}

\section{Structure of the 2-factors \texorpdfstring{$\cC_{2n+1}^0$}{C2n+10} and \texorpdfstring{$\cC_{2n+1}^1$}{C2n+11}}
\label{sec:structure-2-factor}

This section constitutes the first building block of our analysis of the graph $\cG(\cC_{2n+1}^1,\cX_{2n}^1(n,n+1))$ that is required for proving Proposition~\ref{prop:main1} and \ref{prop:main2}. Specifically, we analyze in detail the cycle structure of the 2-factors $\cC_{2n+1}^0$ and $\cC_{2n+1}^1$ defined in \eqref{eq:2-factor-0} and \eqref{eq:2-factor-1}, and show that the cycles of each of those 2-factors are in one-to-one correspondence with all plane trees with $n$ edges (these trees will be defined shortly). It follows that the nodes of $\cG(\cC_{2n+1}^1,\cX_{2n}^1(n,n+1))$ correspond to plane trees (recall that the nodes of $\cG(\cC_{2n+1}^1,\cX_{2n}^1(n,n+1))$ are the cycles of $\cC_{2n+1}^1$, see the bottom of Figure~\ref{fig:idea}).
This correspondence between the cycles of $\cC_{2n+1}^0$ and $\cC_{2n+1}^1$ and plane trees is stated in Lemma~\ref{lemma:2f-C0} and \ref{lemma:2f-C1} below, respectively.
Recall from Remark~\ref{remark:C0C1} that we are ultimately only interested in the 2-factor $\cC_{2n+1}^1$, but that the simpler 2-factor $\cC_{2n+1}^0$ is the key to understanding $\cC_{2n+1}^1$.

As we have seen in Section~\ref{sec:correctness}, to prove that our construction of 2-factors works as claimed, we only needed to consider the sets of first, second and last vertices of the paths in the sets $\cP_{2n}(k,k+1)$ (and could neglect all other vertices on these paths). So far we did not use any information about which of those vertices actually lie on the same paths, e.g., which first vertices are connected to which last vertices. This information will become crucial in the following.

\subsection{Subpaths of lattice paths}

We begin by extending some of the notation introduced in Section~\ref{sec:lattice-paths}.

\textit{The subpaths $p[x,x']$, $\ell(p)$, $r(p)$.}
For any lattice path $p\in P_n$ and any two abscissas $0\leq x\leq x'\leq n$ we define $p[x,x']$ as the subpath of $p$ between (and including) the abscissas $x$ and $x'$.

For any $n\geq 1$ and any lattice path $p$ in one of the sets $D_{2n}^{=0}(k)$, $D_{2n}^{>0}(k+1)$ and $D_{2n}^-(k)$, $k=n,n+1,\ldots,2n-1$, we define disjoint subpaths $\ell(p)$ and $r(p)$ of $p$ that cover all but two steps of $p$ as follows (see Figure~\ref{fig:subpaths}):
\begin{subequations} \label{eq:def-ell-r}
\begin{itemize}
\item
If $p\in D_{2n}^{=0}(k)$ we define
\begin{equation} \label{eq:def-ell-r-F}
  \ell(p):=p[1,x-1] \quad \text{and} \quad r(p):=p[x,2n] \enspace,
\end{equation}
where $x$ is the smallest strictly positive abscissa where $p$ touches the $y$-axis.

\item
If $p\in D_{2n}^{>0}(k+1)$ we define
\begin{equation} \label{eq:def-ell-r-S}
  \ell(p):=p[1,x] \quad \text{and} \quad r(p):=p[x+1,2n] \enspace,
\end{equation}
where $x$ is the largest abscissa where $p$ touches the line $y=1$ (the first time $p$ touches it is at $(1,1)$).

\item
If $p\in D_{2n}^-(k)$ we define
\begin{equation} \label{eq:def-ell-r-L}
  \ell(p):=p[0,x-1] \quad \text{and} \quad r(p):=p[x+1,2n] \enspace,
\end{equation}
where $x$ is the abscissa where $p$ touches the line $y=-1$.
\end{itemize}
\end{subequations}

With those definitions, depending on whether $p$ is contained in $D_{2n}^{=0}(k)$, $D_{2n}^{>0}(k+1)$ or $D_{2n}^-(k)$, we have
\begin{subequations}
\begin{align}
  p &= (\upstep)\circ\ell(p)\circ(\downstep)\circ r(p) \enspace, \label{eq:ell-r-F-partition} \\
  p &= (\upstep)\circ\ell(p)\circ(\upstep)\circ r(p) \enspace, \label{eq:ell-r-S-partition} \\
  p &= \ell(p)\circ(\downstep,\upstep)\circ r(p) \enspace, \notag 
\end{align}
\end{subequations}
respectively. In all cases, the subpath $\ell(p)$ starts and ends at the same ordinate and never moves below this ordinate in between. Furthermore, the ordinate of the endpoint of the subpath $r(p)$ is by $2(k-n)$ higher than the ordinate of its starting point and also this subpath never moves below the ordinate of its starting point.

\begin{figure}
\centering
\PSforPDF{
 \psfrag{z}{0}
 \psfrag{x}{$x$}
 \psfrag{2n}{$2n$}
 \psfrag{2k}[c][c][1][90]{\small $2(k-n)$}
 \psfrag{2kp}[c][c][1][90]{\small $2(k+1-n)$}
 \psfrag{Deq0}{$p\in D_{2n}^{=0}(k)$}
 \psfrag{Dgt0}{$p\in D_{2n}^{>0}(k+1)$}
 \psfrag{Dm}{$p\in D_{2n}^-(k)$}
 \psfrag{ellp}{$\ell(p)$}
 \psfrag{rp}{$r(p)$}
 \includegraphics{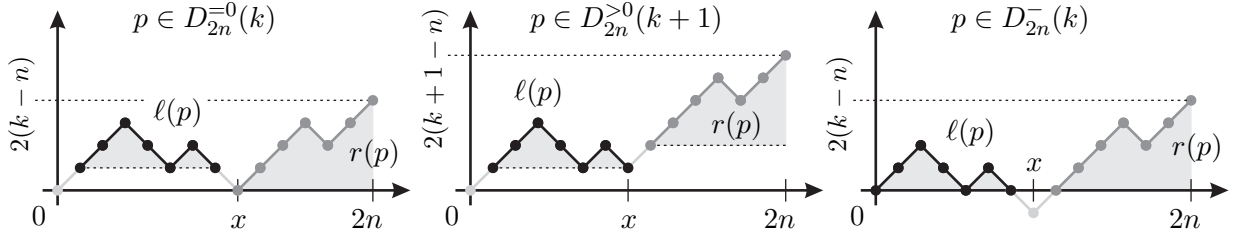}
}
\caption{Illustration of the definitions \eqref{eq:def-ell-r}.}
\label{fig:subpaths}
\end{figure}

\subsection{Matching between first and last vertices}

The next lemma relates the lattice paths $\varphi(F(P))$, $\varphi(S(P))$ and $\varphi(L(P))$ corresponding to the first, second and last vertex on each of the paths $P\in\cP_{2n}(k,k+1)$ arising in the construction of Section~\ref{sec:construction} for the all-zero parameter sequence. In the following we will repeatedly use that by Lemma~\ref{lemma:FSL-D-isomorphic} those lattice paths satisfy $\varphi(F(P))\in D_{2n}^{=0}(k)$, $\varphi(S(P))\in D_{2n}^{>0}(k+1)$ and $\varphi(L(P))\in D_{2n}^-(k)$ (recall Remark~\ref{remark:use-lemma-FSL-D}).

\begin{lemma}
\label{lemma:FL-matching-0}
For any $n\geq 1$, the sets of paths $\cP_{2n}(k,k+1)$, $k=n,n+1,\ldots,2n-1$, defined in Section~\ref{sec:construction} for the parameter sequence $(\alpha_{2i})_{1\leq i\leq n-1}$ with $\alpha_{2i}=(0,0,\ldots,0)\in\{0,1\}^{i-1}$ for all $i=1,\ldots,n-1$ have the following properties: For any path $P\in\cP_{2n}(k,k+1)$, defining $p_F:=\varphi(F(P))\in D_{2n}^{=0}(k)$, $p_S:=\varphi(S(P))\in D_{2n}^{>0}(k+1)$ and $p_L:=\varphi(L(P))\in D_{2n}^-(k)$, we have
\begin{align}
  (\ell(p_F),r(p_F)) &= (\ell(p_S),r(p_S)) \enspace, \label{eq:FS-relation-0} \\
  (\ell(p_F),r(p_F)) &= (\ell(p_L),r(p_L)) \enspace, \label{eq:FL-relation-0}
\end{align}
where $\ell(p_F)$ and $r(p_F)$ are defined in \eqref{eq:def-ell-r-F}, $\ell(p_S)$ and $r(p_S)$ in \eqref{eq:def-ell-r-S}, and $\ell(p_L)$ and $r(p_L)$ in \eqref{eq:def-ell-r-L}.
\end{lemma}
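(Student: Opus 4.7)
The plan is induction on $n$, guided by the inductive construction of $\cP_{2n}(k,k+1)$ from Section~\ref{sec:construction}. With the all-zero parameter sequence the isomorphism $f_{\alpha_{2n}}$ specializes to the involution $\ol{\rev}$, which reverses a step sequence and swaps each $\upstep$ with $\downstep$. The base case $n=1$ is a direct check on the unique path of $\cP_2(1,2)$: we have $p_F=(\upstep,\downstep)$, $p_S=(\upstep,\upstep)$, $p_L=(\downstep,\upstep)$, and both $\ell$ and $r$ are empty in all three cases.

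For the inductive step we treat two kinds of paths $P^+\in\cP_{2n+2}(k,k+1)$ separately. The easy case is $P^+ = P\circ(a,b)$ with $P$ from a lower-level set, covering every summand of \eqref{eq:ind-step1-P} and the first two summands of \eqref{eq:ind-step2-P}. Here $F(P^+)$, $S(P^+)$, $L(P^+)$ all pick up the same suffix $(a,b)$, and a short case analysis over the four possible suffixes shows that in each of the extended paths the two appended steps remain strictly above the critical line of the decomposition ($y=0$ for $p_F$, $y=1$ for $p_S$, $y=-1$ for $p_L$). The defining abscissa for $\ell$ therefore does not move, so $\ell$ is unchanged while $r$ merely gains the two new steps, and the induction hypothesis for $P$ transfers directly to $P^+$.

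The interesting case is $P^+ \in \cP_{2n+2}^*$. Combining \eqref{eq:new-paths-FSL} with Lemma~\ref{lemma:FL-invariant} and tracing the trajectory in $\cC_{2n+1}^-$ from $S(\widehat P_1)\circ(0)$ through $\widehat P_1\circ(0)$, a matching edge of $M_{2n+1}^{FL}$, the path $f_{\alpha_{2n}}(\widehat P_1')\circ(1)$, and another matching edge back to $F(\cP_{2n}(n,n+1))\circ(0)$, we obtain three paths $\widehat P_1,\widehat P_1',\widehat P_2\in\cP_{2n}(n,n+1)$ with $L(\widehat P_1') = \ol{\rev}(L(\widehat P_1))$ and $F(\widehat P_2) = \ol{\rev}(F(\widehat P_1'))$, so that $F(P^+) = S(\widehat P_1)\circ(0,0)$, $S(P^+) = S(\widehat P_1)\circ(0,1)$, and $L(P^+) = F(\widehat P_2)\circ(0,1)$. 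The induction hypothesis for $\widehat P_1$ provides $\ell''$ and $r''$ with $\varphi(S(\widehat P_1)) = (\upstep)\circ\ell''\circ(\upstep)\circ r''$ and $\varphi(L(\widehat P_1)) = \ell''\circ(\downstep,\upstep)\circ r''$. Applying $\ol{\rev}$ gives $\varphi(L(\widehat P_1')) = \ol{\rev}(r'')\circ(\downstep,\upstep)\circ\ol{\rev}(\ell'')$, and the induction hypothesis for $\widehat P_1'$ then identifies its $\ell$ and $r$ parts as $\ol{\rev}(r'')$ and $\ol{\rev}(\ell'')$; one more application of $\ol{\rev}$ combined with $\ol{\rev}^2 = \id$ yields $\varphi(F(\widehat P_2)) = \ell''\circ(\upstep)\circ r''\circ(\downstep)$. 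A direct computation of $\ell$ and $r$ on $\varphi(S(\widehat P_1))\circ(\downstep,\downstep)$, $\varphi(S(\widehat P_1))\circ(\downstep,\upstep)$, and $\varphi(F(\widehat P_2))\circ(\downstep,\upstep)$ then produces empty $r$ in all three cases and the common $\ell = \ell''\circ(\upstep)\circ r''\circ(\downstep)$, closing the induction.

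The main obstacle is this last case: one must correctly identify the three auxiliary paths $\widehat P_1,\widehat P_1',\widehat P_2$ from the trajectory in $\cC_{2n+1}^-$, and then combine three separate instances of the induction hypothesis with two applications of $\ol{\rev}$ that fortuitously cancel via $\ol{\rev}^2=\id$.
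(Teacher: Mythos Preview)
Your proof is correct and follows essentially the same approach as the paper: induction on $n$, the same case split between the ``direct extension'' paths $P\circ(a,b)$ and the paths in $\cP_{2n+2}^*$, and in the latter case the same chain of three auxiliary paths (your $\widehat P_1,\widehat P_1',\widehat P_2$ are the paper's $P,\Phat,P'$) linked by $\ol{\rev}$ and the induction hypothesis. One minor remark: in the easy case your phrase ``remain strictly above the critical line'' is only the genuine constraint for $p_S$ (where the decomposition uses the \emph{last} touch of $y=1$); for $p_F$ and $p_L$ the defining abscissa is the first touch and hence cannot move under a suffix extension regardless, so no case analysis is needed there.
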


With the equalities in \eqref{eq:FS-relation-0} and \eqref{eq:FL-relation-0} we mean that the step sequences of the lattice paths $\ell(p_F)$, $\ell(p_S)$ and $\ell(p_L)$, and the step sequences of the lattice paths $r(p_F)$, $r(p_S)$ and $r(p_L)$ are the same. The absolute coordinates of those subpaths of $p_F$, $p_S$ and $p_L$ might be different.

The proof of Lemma~\ref{lemma:FL-matching-0} shows that \eqref{eq:FS-relation-0} holds for \emph{any} choice of the parameter sequence, not just for the all-zero parameter sequence (recall the remarks from Section~\ref{sec:dependence-alpha}).

\begin{proof}
We argue by induction over $n$. By the definition in \eqref{eq:ind-base-P}, for $n=1$ the sets of paths $\cP_{2n}(k,k+1)$ consist only of a single set $\cP_2(1,2)$, which contains only a single path $P:=((1,0),(1,1),(0,1))$ ($P$ has two edges). We clearly have $p_F:=\varphi(F(P))=(\upstep,\downstep)\in D_2^{=0}(1)$, $p_S:=\varphi(S(P))=(\upstep,\upstep)\in D_2^{>0}(2)$ and $p_L:=\varphi(L(P))=(\downstep,\upstep)\in D_2^-(1)$, and by the definitions in \eqref{eq:def-ell-r} the subpaths $\ell(p_F)$, $r(p_F)$, $\ell(p_S)$, $r(p_S)$, $\ell(p_L)$ and $r(p_L)$ of those lattice paths all consist only of a single point (and zero steps), showing that both claims of the lemma hold. This settles the induction basis.

For the induction step $n\rightarrow n+1$ let $n\geq 1$ be fixed. We consider a fixed path $P^+$ from one of the sets $\cP_{2n+2}(k,k+1)$, $k=n+1,n+2,\ldots,2n+1$, and define the lattice paths $p_F^+:=\varphi(F(P^+))\in D_{2n+2}^{=0}(k)$, $p_S^+:=\varphi(S(P^+))\in D_{2n+2}^{>0}(k+1)$ and $p_L^+:=\varphi(L(P^+))\in D_{2n+2}^-(k)$.
By the definitions in \eqref{eq:ind-step1-P} and \eqref{eq:ind-step2-P}, $P^+$ is either contained in the set
\begin{equation} \label{eq:upper-paths}
  \cP_{2n}(n,n+1)\circ \{(1,0),(1,1)\} \cup \bigcup_{k'=n+1}^{2n-1} \cP_{2n}(k',k'+1)\circ\{(0,0),(1,0),(0,1),(1,1)\} 
\end{equation}
or in the set $\cP_{2n+2}^*$ defined in \eqref{eq:new-paths} (in the latter case we have $k=n+1$).

We first consider the case that $P^+$ is contained in \eqref{eq:upper-paths}, i.e., $P^+$ is obtained from some path $P\in\cP_{2n}(k',k'+1)$, $n\leq k'\leq 2n-1$, by extending each vertex label of $P$ by two bits $x_1,x_2\in\{0,1\}$.
We know by induction that the lattice paths $p_F:=\varphi(F(P))\in D_{2n}^{=0}(k')$, $p_S:=\varphi(S(P))\in D_{2n}^{>0}(k'+1)$ and $p_L:=\varphi(L(P))\in D_{2n}^-(k')$ satisfy the relations
\begin{align}
  (\ell(p_F),r(p_F)) &= (\ell(p_S),r(p_S)) \enspace, \label{eq:ind-FS-relation} \\
  (\ell(p_F),r(p_F)) &= (\ell(p_L),r(p_L)) \enspace. \label{eq:ind-FL-relation}
\end{align}
Moreover, we clearly have
\begin{subequations}
\begin{align}
  p_F^+ &= p_F\circ\varphi((x_1,x_2)) \enspace, \label{eq:pFp-pF} \\
  p_S^+ &= p_S\circ\varphi((x_1,x_2)) \enspace, \label{eq:pSp-pS} \\
  p_L^+ &= p_L\circ\varphi((x_1,x_2)) \enspace. \label{eq:pLp-pL}
\end{align}
\end{subequations}
Using \eqref{eq:pFp-pF} and the fact that $p_F$ is contained in the set $D_{2n}^{=0}(k')$, the definition in \eqref{eq:def-ell-r-F} yields
\begin{equation} \label{eq:ell-r-pFp-pF}
  (\ell(p_F^+),r(p_F^+)) = \big(\ell(p_F),r(p_F)\circ \varphi((x_1,x_2))\big) \enspace.
\end{equation}
Similarly, using \eqref{eq:pLp-pL} and the fact that $p_L$ is contained in the set $D_{2n}^-(k')$, the definition in \eqref{eq:def-ell-r-L} yields
\begin{equation} \label{eq:ell-r-pLp-pL}
  (\ell(p_L^+),r(p_L^+)) = \big(\ell(p_L),r(p_L)\circ \varphi((x_1,x_2))\big) \enspace.
\end{equation}
Using that $p_S\in D_{2n}^{>0}(k'+1)$, it follows that if $k'=n$, then the $y$-coordinate of the last point of $p_S$ is 2, whereas if $k'\geq n+1$, then the $y$-coordinate of the last point of $p_S$ is at least 4. Combined with \eqref{eq:upper-paths} and \eqref{eq:pSp-pS} it follows that the last two steps of $p_S^+$ do not move below the line $y=2$. By the definition in \eqref{eq:def-ell-r-S} and by \eqref{eq:pSp-pS} we therefore have
\begin{equation} \label{eq:ell-r-pSp-pS}
  (\ell(p_S^+),r(p_S^+)) = \big(\ell(p_S),r(p_S)\circ \varphi((x_1,x_2))\big) \enspace.
\end{equation}
Combining \eqref{eq:ind-FS-relation}, \eqref{eq:ell-r-pFp-pF} and \eqref{eq:ell-r-pSp-pS} yields $(\ell(p_F^+),r(p_F^+))=(\ell(p_S^+),r(p_S^+))$ and thus proves \eqref{eq:FS-relation-0}. Combining \eqref{eq:ind-FL-relation}, \eqref{eq:ell-r-pFp-pF} and \eqref{eq:ell-r-pLp-pL} yields $(\ell(p_F^+),r(p_F^+))=(\ell(p_L^+),r(p_L^+))$ and thus proves \eqref{eq:FL-relation-0}.
(Note that so far we did not use that $\alpha_{2n}=(0,0,\ldots,0)\in\{0,1\}^{n-1}$, but only that all other elements of the parameter sequence used in previous construction steps are zero vectors as well, so that the induction hypothesis can be used.)

We now consider the case that $P^+$ is contained in the set $\cP_{2n+2}^*$. For the reader's convenience, Figure~\ref{fig:new-paths} illustrates the notations used in this part of the proof. By the definition in \eqref{eq:new-paths}, there are two paths $P,P'\in\cP_{2n}(n,n+1)$ with
\begin{subequations} \label{eq:FS-Pp-P}
\begin{align}
  F(P^+) &= S(P)\circ(0,0) \enspace, \\
  S(P^+) &= S(P)\circ(0,1) \enspace, \\
  L(P^+) &= F(P')\circ(0,1)
\end{align}
\end{subequations}
(see~\eqref{eq:new-paths-FSL}).
Defining $p_S:=\varphi(S(P))\in D_{2n}^{>0}(n+1)$ and $p_F':=\varphi(F(P'))\in D_{2n}^{=0}(n)$ we obtain from \eqref{eq:FS-Pp-P} that
\begin{subequations}
\begin{align}
  p_F^+ &= p_S\circ (\downstep,\downstep) \enspace, \label{eq:pFp-pS} \\
  p_S^+ &= p_S\circ (\downstep,\upstep) \enspace, \label{eq:pSp-pS2} \\
  p_L^+ &= p_F'\circ (\downstep,\upstep) \enspace. \label{eq:pLp-pFpr}
\end{align}
\end{subequations}
The lattice path $p_S\in D_{2n}^{>0}(n+1)$ clearly ends at $(2n,2)$.
From \eqref{eq:pFp-pS} it follows that $p_F^+\in D_{2n+2}^{=0}(n+1)$ and that the smallest strictly positive abscissa where this lattice path touches the $y$-axis is $2n+2$ (see Figure~\ref{fig:new-paths}). By the definition in \eqref{eq:def-ell-r-F} and by \eqref{eq:pFp-pS} we therefore have
\begin{equation} \label{eq:ell-r-pF+}
  (\ell(p_F^+),r(p_F^+))=\big(p_S[1,2n]\circ (\downstep),()\big)
\end{equation}
($r(p_F^+)$ consists only of a single point).
From \eqref{eq:pSp-pS2} it follows that $p_S^+\in D_{2n+2}^{>0}(n+2)$ and that the largest abscissa where this lattice path touches the line $y=1$ is $2n+1$. By the definition in \eqref{eq:def-ell-r-S} and by \eqref{eq:pSp-pS2} we therefore have
\begin{equation*}
  (\ell(p_S^+),r(p_S^+))=\big(p_S[1,2n]\circ (\downstep),()\big) \enspace,
\end{equation*}
which together with \eqref{eq:ell-r-pF+} shows that \eqref{eq:FS-relation-0} also holds in this case. (Note that the inductive proof of \eqref{eq:FS-relation-0} goes through for any choice of $\alpha_{2n}\in\{0,1\}^{n-1}$.)

It remains to prove \eqref{eq:FL-relation-0} in this case (this part of the proof uses that $\alpha_{2n}=(0,0,\ldots,0)\in\{0,1\}^{n-1}$).
From \eqref{eq:pLp-pFpr} it follows that $p_L^+\in D_{2n+2}^-(n+1)$ and that the only abscissa where this lattice path touches the line $y=-1$ is $2n+1$. By the definition in \eqref{eq:def-ell-r-L} and by \eqref{eq:pLp-pFpr} we therefore have
\begin{equation} \label{eq:ell-r-pL+}
  (\ell(p_L^+),r(p_L^+))=\big(p_F',()\big) \enspace.
\end{equation}
By \eqref{eq:ell-r-pF+} and \eqref{eq:ell-r-pL+}, to complete the proof of the lemma we need to show that $p_F'=p_S[1,2n]\circ(\downstep)$.

By induction we know that the lattice paths $p_F:=\varphi(F(P))\in D_{2n}^{=0}(n)$ and $p_L:=\varphi(L(P))\in D_{2n}^-(n)$ satisfy the relations
\begin{align}
  (\ell(p_F),r(p_F)) &= (\ell(p_S),r(p_S)) \enspace, \label{eq:ell-r-pF-pS} \\
  (\ell(p_F),r(p_F)) &= (\ell(p_L),r(p_L)) \enspace. \label{eq:ell-r-pF-pL}
\end{align}
By Lemma~\ref{lemma:FL-invariant} there is a path $\Phat\in\cP_{2n}(n,n+1)$ satisfying $f_{\alpha_{2n}}(L(\Phat))=L(P)$ and $f_{\alpha_{2n}}(F(\Phat))=F(P')$. By the definition in \eqref{eq:f-alpha}, for $\alpha_{2n}=(0,0,\ldots,0)\in\{0,1\}^{n-1}$ we have $f_{\alpha_{2n}}=\ol{\rev}$, so these relations imply that the corresponding lattice paths $\phat_L:=\varphi(L(\Phat))\in D_{2n}^-(n)$ and $\phat_F:=\varphi(F(\Phat))\in D_{2n}^{=0}(n)$ satisfy
\begin{align}
  \big(\ol{\rev}(r(\phat_L)),\ol{\rev}(\ell(\phat_L))\big) &= (\ell(p_L),r(p_L)) \enspace, \label{eq:ell-r-olpL-pL-flipp} \\
  \ol{\rev}(\phat_F) &= p_F' \label{eq:rev-phF}
\end{align}
(recall that $\ol{\rev}$ simply mirrors a lattice path with $2n$ steps that ends at $(2n,0)$ along the axis $x=n$).
By induction we also know that
\begin{equation}
\label{eq:ell-r-phF-phL}
  (\ell(\phat_F),r(\phat_F)) = (\ell(\phat_L),r(\phat_L)) \enspace.
\end{equation}
Combining our previous observations we obtain
\begin{equation} \label{eq:pF'}
\begin{split}
  p_F' &\eqByM{\eqref{eq:rev-phF}, \eqref{eq:ell-r-F-partition}} \ol{\rev}\big((\upstep)\circ \ell(\phat_F) \circ(\downstep)\circ r(\phat_F)\big) \\
    &\eqBy{eq:ell-r-phF-phL} \ol{\rev}(r(\phat_L))\circ (\upstep)\circ \ol{\rev}(\ell(\phat_L))\circ (\downstep) \\
    &\eqByM{\eqref{eq:ell-r-pF-pL},\eqref{eq:ell-r-olpL-pL-flipp}} \ell(p_F) \circ (\upstep) \circ r(p_F) \circ (\downstep) \\
    &\eqByM{\eqref{eq:ell-r-pF-pS},\eqref{eq:ell-r-S-partition}} p_S[1,2n] \circ (\downstep) \enspace,
\end{split}
\end{equation}
completing the proof.
\end{proof}

\begin{figure}
\centering
\PSforPDF{
 \psfrag{bnp2}{$B_{2n+2}(n+2)$}
 \psfrag{bnp1}{$B_{2n+2}(n+1)$}
 \psfrag{q2n4}{$Q_{2n+1}\circ(1)$}
 \psfrag{q2n1}{$Q_{2n}\circ(0,0)$}
 \psfrag{q2n2}{$Q_{2n}\circ(0,1)$}
 \psfrag{q2n3}{$Q_{2n}\circ(1,1)$}
 \psfrag{m2}{\scriptsize $M_{2n+1}^{FL}\circ(1)$} 
 \psfrag{m1}{\scriptsize $M_{2n+2}^{S}$}
 \psfrag{pp}[c][c][1][13]{$P^+\in\cP_{2n+2}^*$}
 \psfrag{pm0}{\scriptsize $P\circ(0,0)$}
 \psfrag{pm1}{\scriptsize $P\circ(0,1)$}
 \psfrag{pb}{\scriptsize $\Phat\circ(0,1)$}
 \psfrag{ppr}{\scriptsize $P'\circ(0,1)$}
 \psfrag{fpp}{\scriptsize $F(P^+)$}
 \psfrag{spp}{\scriptsize $S(P^+)$}
 \psfrag{lpp}{\scriptsize $L(P^+)$}
 \psfrag{fpb1}{\scriptsize $f_{\alpha_{2n}}(\Phat)\circ(0,1)$}
 \psfrag{fpb2}{\scriptsize $f_{\alpha_{2n}}(\Phat)\circ(1,1)$}
 \psfrag{p2n1}{\scriptsize $\cP_{2n}(n,n+1)\circ(0,0)$}
 \psfrag{p2n2}{\scriptsize $\cP_{2n}(n,n+1)\circ(0,1)$}
 \psfrag{z}{\scriptsize $0$}
 \psfrag{2n}{\scriptsize $2n$}
 \psfrag{ps}{\scriptsize $p_S=\varphi(S(P))$}
 \psfrag{pfp}{\scriptsize $p_F^+=\varphi(F(P^+))$}
 \psfrag{psp}{\scriptsize $p_S^+=\varphi(S(P^+))$}
 \psfrag{plp}{\scriptsize $p_L^+=\varphi(L(P^+))$}
 \psfrag{pfpr}{\scriptsize $p_F'=\varphi(F(P'))$}
 \psfrag{pphi}{\scriptsize $=\varphi(f_{\alpha_{2n}}(F(\Phat)))$}
 \psfrag{pa1}{\scriptsize $p_F=\varphi(F(P))$}
 \psfrag{pa2}{\scriptsize $p_L=\varphi(L(P))$}
 \psfrag{pa4}{\scriptsize $\phat_F=\varphi(F(\Phat))$}
 \psfrag{pa5}{\scriptsize $\phat_L=\varphi(L(\Phat))$}
 \psfrag{f1}{\scriptsize $p_L^+=\ol{\rev}(\phat_F)\circ(\downstep,\upstep)$}
 \psfrag{f2}{\scriptsize $p_L=\ol{\rev}(\phat_L)$}
 \psfrag{psi1}{\scriptsize $\psi(p_F)$}
 \psfrag{psi2}{\scriptsize $\psi(p_F')$}
 \psfrag{psiell}{\scriptsize $\psi(\ell(p_F))$}
 \psfrag{psir}{\scriptsize $\psi(r(p_F))$}
 \includegraphics{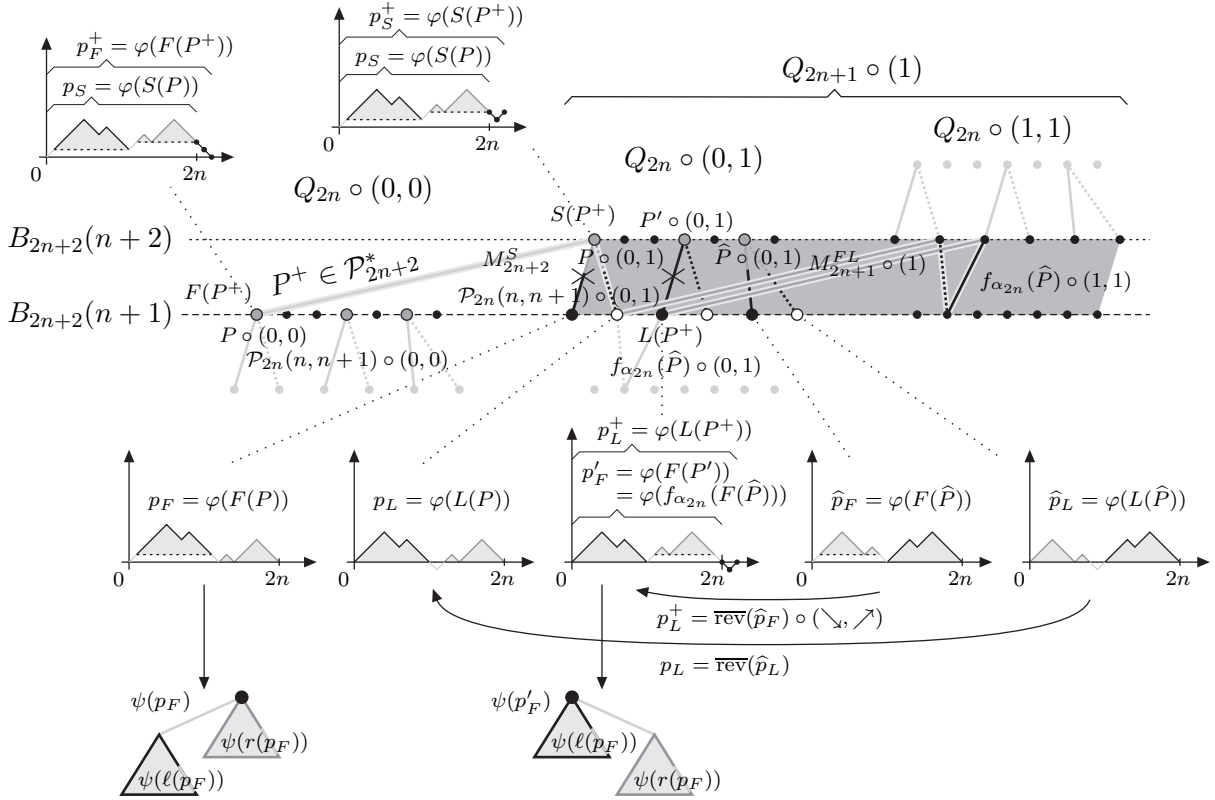}
}
\caption{Notations used in the proof of Lemma~\ref{lemma:FL-matching-0}. The figure illustrates the relations between various lattice paths corresponding to first, second and last vertices of oriented paths in the construction described in Section~\ref{sec:construction} in the induction step $n\rightarrow n+1$ ($Q_{2n}\rightarrow Q_{2n+2}$) when the all-zero parameter sequence is used.}
\label{fig:new-paths}
\end{figure}

\textit{The mapping $\pi_1$ on bitstrings and lattice paths.}
The next lemma is the analogue of Lemma~\ref{lemma:FL-matching-0} for the all-one parameter sequence.
To state it we introduce another definition.
We define
\begin{equation}
\label{eq:def-pi1}
  \pi_1:=\pi_{\alpha_{2n}} \quad \text{for} \enspace \alpha_{2n}:=(1,1,\ldots,1)\in\{0,1\}^{n-1} \enspace,
\end{equation}
where $\pi_{\alpha_{2n}}$ is defined before \eqref{eq:f-alpha}, and for the empty bitstring $()$ we define $\pi_1(()):=()$.
Via the bijection $\varphi$ between bitstrings and lattice paths, we can naturally extend $\pi_1$ to lattice paths (recall \eqref{eq:biject-map}).
For any lattice path with $2n$ steps, the mapping $\pi_1$ swaps any two adjacent steps at positions $2i$ and $2i+1$ for all $i=1,\ldots,n-1$.

\begin{lemma}
\label{lemma:FL-matching-1}
For any $n\geq 1$, the sets of paths $\cP_{2n}(k,k+1)$, $k=n,n+1,\ldots,2n-1$, defined in Section~\ref{sec:construction} for the parameter sequence $(\alpha_{2i})_{1\leq i\leq n-1}$ with $\alpha_{2i}=(1,1,\ldots,1)\in\{0,1\}^{i-1}$ for all $i=1,\ldots,n-1$ have the following properties: For any path $P\in\cP_{2n}(k,k+1)$, defining $p_F:=\varphi(F(P))\in D_{2n}^{=0}(k)$, $p_S:=\varphi(S(P))\in D_{2n}^{>0}(k+1)$ and $p_L:=\varphi(L(P))\in D_{2n}^-(k)$, we have
\begin{align}
  (\ell(p_F),r(p_F)) &= (\ell(p_S),r(p_S)) \enspace, \label{eq:FS-relation-1} \\
  (\ell(p_F),r(p_F)) &= (\pi_1(\ell(p_L)),r(p_L)) \enspace, \label{eq:FL-relation-1}
\end{align}
where $\ell(p_F)$ and $r(p_F)$ are defined in \eqref{eq:def-ell-r-F}, $\ell(p_S)$ and $r(p_S)$ in \eqref{eq:def-ell-r-S}, $\ell(p_L)$ and $r(p_L)$ in \eqref{eq:def-ell-r-L} and $\pi_1$ in \eqref{eq:def-pi1}.
\end{lemma}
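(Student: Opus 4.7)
The plan is to proceed by induction on $n$, closely mirroring the proof of Lemma~\ref{lemma:FL-matching-0}. The base case $n=1$ is identical to before, since no parameters $\alpha_{2i}$ are involved yet. Relation \eqref{eq:FS-relation-1} follows by the same inductive argument that proved \eqref{eq:FS-relation-0}; as the paper explicitly notes, that argument does not depend on $\alpha_{2n}$. So the actual work lies in proving \eqref{eq:FL-relation-1}.

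For the upper-layers case of the induction step, where $P^+$ arises from some $P\in\cP_{2n}(k',k'+1)$ by appending two bits $(x_1,x_2)$, appending $\varphi((x_1,x_2))$ to $p_F$ and $p_L$ does not shift the return-to-zero point of $p_F$ nor the touching point of $p_L$, so $\ell(p_F^+) = \ell(p_F)$, $r(p_F^+) = r(p_F)\circ\varphi((x_1,x_2))$, and analogously for $p_L$. Since $\ell(p_L^+) = \ell(p_L)$ has the same length as $\ell(p_L)$, the action of $\pi_1$ on these two subpaths agrees, and the inductive hypothesis propagates directly to $P^+$.

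The main case is $P^+\in\cP_{2n+2}^*$. Following the notation of the proof of Lemma~\ref{lemma:FL-matching-0}, we have $p_L^+ = p_F'\circ(\downstep,\upstep)\in D_{2n+2}^-(n+1)$ with its touching point at abscissa $2n+1$, so $\ell(p_L^+) = p_F'$ and $r(p_L^+)=()$. The computation of $\ell(p_F^+)$ and $r(p_F^+)$ (which is independent of $\alpha_{2n}$) gives $\ell(p_F^+) = p_S[1,2n]\circ(\downstep)$ and $r(p_F^+)=()$. So \eqref{eq:FL-relation-1} in this case reduces to the identity $\pi_1(p_F') = p_S[1,2n]\circ(\downstep)$. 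By Lemma~\ref{lemma:FL-invariant} there is a $\Phat\in\cP_{2n}(n,n+1)$ with $f_{\alpha_{2n}}(F(\Phat)) = F(P')$ and $f_{\alpha_{2n}}(L(\Phat)) = L(P)$; since $\alpha_{2n}=(1,\ldots,1)$, we have $f_{\alpha_{2n}} = \ol{\rev}\bullet\pi_1$, giving $p_F' = \ol{\rev}(\pi_1(\phat_F))$ and $\pi_1(\phat_L) = \ol{\rev}(p_L)$.

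The two technical ingredients I would verify explicitly are: (i)~$\pi_1$ commutes with $\ol{\rev}$ (both mirror position-indices about the center of an even-length path) and $\pi_1\bullet\pi_1=\id$, which together yield $\pi_1(p_F') = \ol{\rev}(\phat_F)$; and (ii)~for any path in $D_{2n}^-(n)$, the four steps at positions $2j,2j+1,2j+2,2j+3$ surrounding the touching point $(2j+1,-1)$ form the local pattern $(\downstep,\downstep,\upstep,\upstep)$ (with obvious adjustments at the extreme cases $j=0$ or $j=n-1$), which is fixed by the two boundary swaps of $\pi_1$; hence $\pi_1$ preserves the $(\downstep,\upstep)$-decomposition, i.e., $\pi_1(\phat_L) = \pi_1(\ell(\phat_L))\circ(\downstep,\upstep)\circ\pi_1(r(\phat_L))$. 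Matching this against the analogous decomposition of $\ol{\rev}(p_L)$ then gives $\pi_1(\ell(p_L)) = \ol{\rev}(r(\phat_L))$. Combining these with the inductive hypothesis $(\ell(\phat_F),r(\phat_F)) = (\pi_1(\ell(\phat_L)),r(\phat_L))$, expanding $\phat_F = (\upstep)\circ\ell(\phat_F)\circ(\downstep)\circ r(\phat_F)$, and using \eqref{eq:FS-relation-1} for $P$ in the form $p_S = (\upstep)\circ\pi_1(\ell(p_L))\circ(\upstep)\circ r(p_L)$, one obtains $\pi_1(p_F') = \ol{\rev}(\phat_F) = \pi_1(\ell(p_L))\circ(\upstep)\circ r(p_L)\circ(\downstep) = p_S[1,2n]\circ(\downstep)$, as required. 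The main obstacle is precisely this bookkeeping around $\pi_1$ — the commutation with $\ol{\rev}$ and the invariance on the local pattern around the $y=-1$ touching point — both direct but needing a careful case check.
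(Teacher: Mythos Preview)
Your proposal is correct and follows essentially the same route as the paper's proof: the paper also reduces to the identity $\pi_1(p_F') = p_S[1,2n]\circ(\downstep)$, uses $f_{\alpha_{2n}}=\ol{\rev}\bullet\pi_1$ together with the commutation $\ol{\rev}\bullet\pi_1=\pi_1\bullet\ol{\rev}$ and $\pi_1\bullet\pi_1=\id$ to get $\pi_1(p_F')=\ol{\rev}(\phat_F)$, and uses that $\phat_L$ and $\pi_1(\phat_L)$ touch $y=-1$ at the same abscissa to obtain the primed analogue of the $\ell/r$-matching between $\phat_L$ and $p_L$. The only cosmetic difference is that the paper routes the final chain through $(\ell(p_F),r(p_F))$ rather than $(\pi_1(\ell(p_L)),r(p_L))$, but these coincide by the induction hypothesis.
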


\begin{proof}
The proof is completely analogous to the proof of Lemma~\ref{lemma:FL-matching-0}. We simply substitute \eqref{eq:ind-FL-relation}, \eqref{eq:ell-r-pF-pL}, \eqref{eq:ell-r-olpL-pL-flipp}, \eqref{eq:rev-phF}, \eqref{eq:ell-r-phF-phL} and \eqref{eq:pF'} by the following primed versions of these equations:
\begin{equation}
\labelp{eqp:ind-FL-relation}{eq:ind-FL-relation}
  (\ell(p_F),r(p_F))=(\pi_1(\ell(p_L)),r(p_L)) \enspace,
\end{equation}
\begin{equation}
\labelp{eqp:ell-r-pF-pL}{eq:ell-r-pF-pL}
  (\ell(p_F),r(p_F))=(\pi_1(\ell(p_L)),r(p_L)) \enspace,
\end{equation}
\begin{subequations}
\begin{align}
  \big(\ol{\rev}(\pi_1(r(\phat_L))),\ol{\rev}(\pi_1(\ell(\phat_L)))\big) &= (\ell(p_L),r(p_L)) \enspace, \labelp{eqp:ell-r-olpL-pL-flipp}{eq:ell-r-olpL-pL-flipp} \\
  \ol{\rev}(\pi_1(\phat_F)) &= p_F' \enspace, \labelp{eqp:rev-phF}{eq:rev-phF}
\end{align}
\end{subequations}
\begin{equation}
\labelp{eqp:ell-r-phF-phL}{eq:ell-r-phF-phL}
  (\ell(\phat_F),r(\phat_F))=(\pi_1(\ell(\phat_L)),r(\phat_L)) \enspace,
\end{equation}
\begin{equation}
\labelp{eqp:pF'}{eq:pF'}
\begin{split}
  \pi_1(p_F') \eqBy{eqp:rev-phF} \pi_1(\ol{\rev}(\pi_1(\phat_F)))=\ol{\rev}(\phat_F)
    &\eqBy{eq:ell-r-F-partition} \ol{\rev}\big((\upstep)\circ \ell(\phat_F) \circ(\downstep)\circ r(\phat_F)\big) \\
    &\eqBy{eqp:ell-r-phF-phL} \ol{\rev}(r(\phat_L))\circ (\upstep)\circ \ol{\rev}(\pi_1(\ell(\phat_L)))\circ (\downstep) \\
    &\eqByM{\eqref{eqp:ell-r-pF-pL},\eqref{eqp:ell-r-olpL-pL-flipp}} \ell(p_F) \circ (\upstep) \circ r(p_F) \circ (\downstep) \\
    &\eqByM{\eqref{eq:FS-relation-1},\eqref{eq:ell-r-S-partition}} p_S[1,2n] \circ (\downstep) \enspace.
\end{split}
\end{equation}
In the derivation of \eqref{eqp:ell-r-olpL-pL-flipp} and \eqref{eqp:rev-phF} we use that for $\alpha_{2n}=(1,1,\ldots,1)\in\{0,1\}^{n-1}$ we have $f_{\alpha_{2n}}=\ol{\rev}\bullet \pi_1$. Moreover, to derive \eqref{eqp:ell-r-olpL-pL-flipp} we use that the abscissas where the lattice paths $\phat_L$ and $\pi_1(\phat_L)$ touch the line $y=-1$ are the same.
To derive \eqref{eqp:pF'} we use the relations $\ol{\rev}\bullet\pi_1=\pi_1\bullet \ol{\rev}$ and $\pi_1\bullet\pi_1=\id$ in the second and the second to last step.
\end{proof}

\subsection{Ordered rooted trees and plane trees}
\label{sec:trees}

In this section we introduce ordered rooted trees and plane trees, two well-known sets of combinatorial objects. Moreover, we introduce natural transformations between those trees and a bijection between ordered rooted trees and the set of lattice paths $D_{2n}^{=0}(n)$ defined in Section~\ref{sec:lattice-paths}.
We shall see in the next section that the cycle structure of the 2-factors $\cC_{2n+1}^0$ and $\cC_{2n+1}^1$ can be characterized very precisely with the help of those trees.
For the reader's convenience, the notions introduced in the following are illustrated in Figure~\ref{fig:or-trees} and Figure~\ref{fig:p-trees}.

\textit{Ordered rooted trees.}
An \emph{ordered rooted tree} is a tree with a distinguished vertex $r$, called the \emph{root}, and in addition, for each vertex $v$ of $T$, a left-to-right ordering of neighbors of $v$. For the root $r$ this left-to-right ordering contains all neighbors of $r$, for every other vertex $v$ it contains all neighbors except the one on the path from $v$ to $r$. The neighbors of $v$ contained in the left-to-right ordering are referred to as \emph{children of $v$}, and the vertex not contained in this ordering is referred to as the \emph{parent of $v$}.

We think of an ordered rooted tree as a tree drawn in the plane with the root on top, and with downward edges leading from any vertex to its children, where the order in which the children appear from left to right is precisely the specified left-to-right ordering (see the right hand side of Figure~\ref{fig:or-trees}).

We denote by $\cT_n^*$ the set of all ordered rooted trees with $n$ edges.
It is well known that $|\cT_n^*|=C_n$, the $n$-th Catalan number, so we have $(|\cT_n^*|)_{n\geq 1}=(1,2,5,14,42,132,429,1430,4862,16796,\ldots)$ and $|\cT_n^*|=\Theta(4^n\cdot n^{-3/2})$ (see \cite{catalan-seq}).

\textit{Rotation $\trot()$ of ordered rooted trees.}
For any tree $T\in\cT_n^*$ we define the tree $T'=\trot(T)\in\cT_n^*$ as follows: Let $r$ denote the root of $T$ and $(u_1,\ldots,u_k)$ the left-to-right ordering of the children of $r$. The underlying (abstract) tree of $T$ and $T'$ is the same. The tree $T'$ is obtained from $T$ by making $u_1$ the new root vertex, and by making the subtree rooted at $r$ without $u_1$ and its descendants (this subtree only contains $r$ and the subtrees rooted at $u_2,\ldots,u_k$) a new rightmost child of $u_1$.

Intuitively, $T'=\trot(T)$ is obtained from $T$ by shifting the root vertex to the left. Note that we obtain the same tree again after $2n/k$ such rotations, where the value of $k$ depends on the symmetries of the tree (see the right hand side of Figure~\ref{fig:or-trees}).

\textit{Bijection $\psi$ between lattice paths and ordered rooted trees.}
For any lattice path $p\in D_{2n}^{=0}(n)$ we inductively define an ordered rooted tree $\psi(p)\in\cT_n^*$ as follows:
If $p=()$, then $\psi(p)$ is the tree consisting only of a single (root) vertex.
If $p$ has the form $p=(\upstep)\circ p'\circ (\downstep)$ with $p'\in D_{2n-2}^{=0}(n-1)$, then $\psi(p)$ is the tree whose root has exactly one child, and the subtree rooted at this child is the tree $\psi(p')$.
Otherwise $p$ has the form $p=p_1\circ p_2$ with $p_i\in D_{2n_i}^{=0}(n_i)$ and $n_i\geq 1$, $i\in\{1,2\}$. In this case $\psi(p)$ is obtained by gluing together the roots of $\psi(p_1)$ and $\psi(p_2)$ such that all subtrees of the new tree originating from $\psi(p_1)$ appear to the left of all subtrees originating from $\psi(p_2)$ (and the order within subtrees from $\psi(p_1)$ and within subtrees from $\psi(p_2)$ is preserved). Note that the definition of $\psi(p)$ in the last case is independent of the partition of $p$ into subpaths $p_1$ and $p_2$, even if there are several different such partitions.

Intuitively, $\psi(p)$ is obtained from $p$ by drawing a downward tree edge for every upstep $\upstep$ of $p$, and by moving up to the parent vertex (without drawing an edge) for every downstep $\downstep$ of $p$ (see the left hand side of Figure~\ref{fig:or-trees}).\footnote{This slightly counterintuitive correspondence arises as we defined $D_{2n}^{=0}(n)$ as the set of lattice paths that stay \emph{above} the line $y=0$, and as we draw rooted trees with the root on top with edges going \emph{downward}.}
It is easy to see that $\psi$ is a bijection between $D_{2n}^{=0}(n)$ and $\cT_n^*$.

Using the bijection $\psi$, all operations on lattice paths $D_{2n}^{=0}(n)$ as e.g.\ the permutation $\pi_1$ defined in \eqref{eq:def-pi1} can be extended naturally to ordered rooted trees $\cT_n^*$ (recall \eqref{eq:biject-map}).

\begin{figure}
\centering
\PSforPDF{
 \psfrag{p}{$p\in D_{10}^{=0}(5)$}
 \psfrag{t}{$T\in \cT_5^*$}
 \psfrag{psi}{$T=\psi(p)$}
 \psfrag{rot}{$\trot$}
 \psfrag{z}{0}
 \psfrag{ten}{10}
 \includegraphics{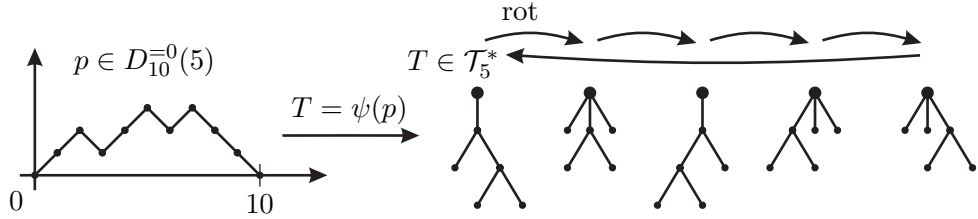}
}
\caption{Rotation of an ordered rooted tree $T\in\cT_5^*$ (right) and the corresponding lattice path $p\in D_{10}^{=0}(5)$ (left). The roots of the trees are drawn bold.}
\label{fig:or-trees}
\end{figure}

\textit{Plane trees.}
A \emph{plane tree} is a tree with a cyclic ordering of all neighbors of each vertex.

We think of a plane tree as a tree embedded in the plane such that for each vertex $v$ the order in which its neighbors are encountered when walking around $v$ in counterclockwise direction is precisely the specified cyclic ordering (see the middle of Figure~\ref{fig:p-trees}).

We denote by $\cT_n$ the set of all plane trees with $n$ edges.
The number of plane trees is $(|\cT_n|)_{n\geq 1}=(1,1,2,3,6,14,34,95,280,854,\ldots)$ and $|\cT_n|=\Theta(4^n\cdot n^{-5/2})$ (see \cite{plane-tree-seq} and references therein for closed formulas for this sequence).

\textit{Transformations $\plane()$ and $\troot()$ between ordered rooted trees and plane trees.}
For any ordered rooted tree $T^*\in\cT_n^*$, we define a plane tree $T=\plane(T^*)\in\cT_n$ as follows: The underlying (abstract) tree of $T^*$ and $T$ is the same. For the root $r$ of $T^*$, the cyclic ordering of neighbors of $r$ in $T$ is given by the left-to-right ordering of the children of $r$ in $T^*$. For any other vertex $v$ of $T^*$, if $(u_1,\ldots,u_k)$ is the left-to-right ordering of the children of $v$ in $T^*$ and $w$ is the parent of $v$, then we define $(w,u_1,\ldots,u_k)$ as the cyclic ordering of neighbors of $v$ in $T$.

For any plane tree $T\in\cT_n$ and any edge $(r,u)$ of $T$, we define an ordered rooted tree $T^*=\troot(T,(r,u))\in\cT_n^*$ as follows: The underlying (abstract) tree of $T$ and $T^*$ is the same. The vertex $r$ is the root of $T^*$, and if $(u_1,\ldots,u_k)$ with $u_1=u$ is the cyclic ordering of neighbors of $r$ in $T$, then $(u_1,\ldots,u_k)$ is the left-to-right ordering of the children of $r$ in $T^*$. For any other vertex $v$ of $T$, if $w$ is the vertex on the path from $v$ to $r$ and $(u_0,u_1,\ldots,u_k)$ with $u_0=w$ is the cyclic ordering of neighbors of $v$ in $T$, then $(u_1,\ldots,u_k)$ is the left-to-right ordering of the children of $v$ in $T^*$.

Informally speaking, $\plane(T^*)$ is obtained from $T^*$ by `forgetting' the root vertex, and $\troot(T,(r,u))$ is obtained from $T$ by `pulling out' the vertex $r$ as root such that $u$ is the leftmost child of the root (see Figure~\ref{fig:p-trees}).

\begin{figure}
\centering
\PSforPDF{
 \psfrag{t1s}{$T_1^*$}
 \psfrag{t2s}{$T_2^*$}
 \psfrag{tp}{$T$}
 \psfrag{u1}{$u_1$}
 \psfrag{u2}{$u_2$}
 \psfrag{r1}{$r_1$}
 \psfrag{r2}{$r_2$}
 \psfrag{plane1}{$\plane(T_1^*)$}
 \psfrag{plane2}{$\plane(T_2^*)$}
 \psfrag{root1}{$\troot(T,(r_1,u_1))$}
 \psfrag{root2}{$\troot(T,(r_2,u_2))$}
 \includegraphics{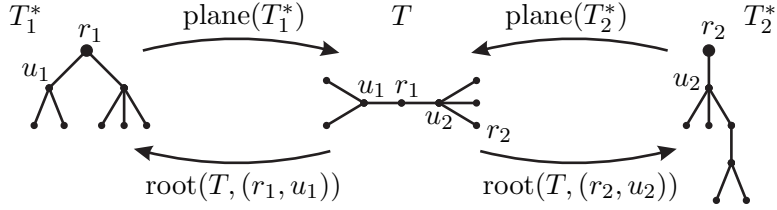}
}
\caption{Two ordered rooted trees $T_1^*,T_2^*\in\cT_7^*$ (left and right), a plane tree $T\in\cT_7$ (middle), and the transformations between them.}
\label{fig:p-trees}
\end{figure}

\subsection{Structure of the 2-factors \texorpdfstring{$\cC_{2n+1}^0$}{C2n+10} and \texorpdfstring{$\cC_{2n+1}^1$}{C2n+11}}

In this section we characterize the cycle structure of the 2-factors $\cC_{2n+1}^0$ and $\cC_{2n+1}^1$ defined in \eqref{eq:2-factor-0} and \eqref{eq:2-factor-1} using the sets of trees introduced in the previous section. The main results of this section for later use are Lemma~\ref{lemma:2f-C0} and \ref{lemma:2f-C1} below.
It turns out that the cycles of the 2-factors $\cC_{2n+1}^0$ and $\cC_{2n+1}^1$ can be characterized by considering the cyclic sequences of first vertices from the paths $\cP_{2n}(n,n+1)$ that are encountered along each cycle. In particular, it will be enlightening to consider the corresponding sequences of ordered rooted trees obtained via the bijection $\psi$.

\textit{The cyclic sequences $F(C)$ and $\varphi(F(C))$.}
Consider the set of paths $\cP_{2n}(n,n+1)$ and the 2-factor $\cC_{2n+1}$ defined in Section~\ref{sec:construction}.
For any cycle $C$ of $\cC_{2n+1}$ we let $F(C)$ denote the cyclic sequence of all vertices of the form $F(P)\circ(0)$, $P\in\cP_{2n}(n,n+1)$, when walking along this cycle (recall \eqref{eq:2-factor}). Formally, we define
\begin{equation}
\label{eq:def-FC}
  F(C):=(F(P_1),\ldots,F(P_k)) \enspace,
\end{equation}
where for all $i=1,\ldots,k$ we have $P_i\in\cP_{2n}(n,n+1)$, $F(P_i)\circ(0)\in C$ and there is a $\Phat\in\cP_{2n}(n,n+1)$ with $f_{\alpha_{2n}}(L(\Phat))=L(P_i)$ and $f_{\alpha_{2n}}(F(\Phat))=F(P_{i+1})$ (recall Lemma~\ref{lemma:FL-invariant}). All indices in this definition are treated modulo $k$, i.e., $P_{k+1}=P_1$.
In the following it will be useful to consider the corresponding cyclic sequence of lattice paths $\varphi(F(C))$. Note that by Lemma~\ref{lemma:FSL-D-isomorphic}, all lattice paths in this sequence are from the set $D_{2n}^{=0}(n)$.

The following lemma describes the relation of any two consecutive elements of the sequence $F(C)$ for any cycle $C$ of the 2-factor $\cC_{2n+1}^0$.

\begin{lemma}
\label{lemma:rotation-0}
For any $n\geq 1$ and any cycle $C$ of the 2-factor $\cC_{2n+1}^0$ defined in \eqref{eq:2-factor-0}, let $p_F,p_F'\in D_{2n}^{=0}(n)$ be any two consecutive elements of the cyclic sequence $\varphi(F(C))$ with $F(C)$ as defined in \eqref{eq:def-FC}, i.e., $\varphi(F(C))=(\ldots,p_F,p_F',\ldots)$. Then we have
\begin{subequations}
\label{eq:rotation-0}
\begin{align}
  p_F  &= (\upstep)\circ \ell(p_F) \circ (\downstep) \circ r(p_F) \enspace, \quad \label{eq:rotation-0-pF} \\
  p_F' &= \ell(p_F)\circ (\upstep)\circ r(p_F)\circ (\downstep) \enspace, \label{eq:rotation-0-pFp}
\end{align}
\end{subequations}
where $\ell(p_F)$ and $r(p_F)$ are defined in \eqref{eq:def-ell-r-F}.
\end{lemma}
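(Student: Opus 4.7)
The plan is to unpack the definition of $F(C)$ in terms of the underlying paths $P, \Phat, P' \in \cP_{2n}(n,n+1)$, then transport everything to the lattice-path side via $\varphi$ and Lemma~\ref{lemma:FL-matching-0}, and finally exploit that for the all-zero parameter sequence used to construct $\cC_{2n+1}^0$ we have $f_{\alpha_{2n}} = \ol{\rev}$, i.e., horizontal reflection across the axis $x=n$ combined with bit inversion. Concretely, by \eqref{eq:def-FC} consecutive elements $F(P), F(P')$ come with a $\Phat$ satisfying $\ol{\rev}(L(\Phat)) = L(P)$ and $\ol{\rev}(F(\Phat)) = F(P')$. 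Writing $p_F := \varphi(F(P))$, $p_L := \varphi(L(P))$, $\phat_F := \varphi(F(\Phat))$, $\phat_L := \varphi(L(\Phat))$, $p_F' := \varphi(F(P'))$, these become $\ol{\rev}(\phat_L) = p_L$ and $\ol{\rev}(\phat_F) = p_F'$. Equation~\eqref{eq:rotation-0-pF} is then nothing but the decomposition \eqref{eq:ell-r-F-partition} for $p_F$, so the content of the lemma is \eqref{eq:rotation-0-pFp}.

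First I would apply Lemma~\ref{lemma:FL-matching-0} twice, to $P$ and to $\Phat$, obtaining
\begin{equation*}
  (\ell(p_F), r(p_F)) = (\ell(p_L), r(p_L)) \quad\text{and}\quad (\ell(\phat_F), r(\phat_F)) = (\ell(\phat_L), r(\phat_L)).
\end{equation*}
The second step is to analyse how $\ol{\rev}$ interacts with the $(\ell, r)$-decomposition of a lattice path in $D_{2n}^-(n)$. Since $\ol{\rev}$ reverses the step sequence while swapping $\upstep \leftrightarrow \downstep$, applying it to $\phat_L = \ell(\phat_L) \circ (\downstep, \upstep) \circ r(\phat_L)$ yields $p_L = \ol{\rev}(r(\phat_L)) \circ (\downstep, \upstep) \circ \ol{\rev}(\ell(\phat_L))$. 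Because $p_L \in D_{2n}^-(n)$ touches the line $y=-1$ at a \emph{unique} abscissa, this decomposition must coincide with the canonical one $p_L = \ell(p_L) \circ (\downstep, \upstep) \circ r(p_L)$ term by term, giving $\ell(p_L) = \ol{\rev}(r(\phat_L))$ and $r(p_L) = \ol{\rev}(\ell(\phat_L))$. Chaining with the two equalities above produces $\ell(p_F) = \ol{\rev}(r(\phat_F))$ and $r(p_F) = \ol{\rev}(\ell(\phat_F))$.

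The final step is a short computation: apply $\ol{\rev}$ to the decomposition $\phat_F = (\upstep) \circ \ell(\phat_F) \circ (\downstep) \circ r(\phat_F)$ guaranteed by \eqref{eq:ell-r-F-partition} and use that $\ol{\rev}$ reverses and flips to obtain
\begin{equation*}
  p_F' = \ol{\rev}(\phat_F) = \ol{\rev}(r(\phat_F)) \circ (\upstep) \circ \ol{\rev}(\ell(\phat_F)) \circ (\downstep) = \ell(p_F) \circ (\upstep) \circ r(p_F) \circ (\downstep),
\end{equation*}
which is exactly \eqref{eq:rotation-0-pFp}. This mirrors the calculation \eqref{eq:pF'} appearing inside the proof of Lemma~\ref{lemma:FL-matching-0}, the conceptual point being that the reflection $\ol{\rev}$ swaps the roles of $\ell$ and $r$ on $D_{2n}^-(n)$-paths and on $D_{2n}^{=0}(n)$-paths. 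The only technical care required is the bookkeeping that forces the unique $(\downstep,\upstep)$-crossing at $y=-1$ to align the two decompositions of $p_L$; beyond that, everything is a direct unfolding of definitions.
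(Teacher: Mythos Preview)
Your proposal is correct and follows essentially the same approach as the paper: the paper's proof simply sets up $P,\Phat,P'$ as you do, notes that $f_{\alpha_{2n}}=\ol{\rev}$ for the all-zero parameter, and then points to the computation \eqref{eq:pF'} inside the proof of Lemma~\ref{lemma:FL-matching-0}, which is exactly the chain of equalities you spell out in full.
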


Note that \eqref{eq:rotation-0-pF} is the known partition relation \eqref{eq:ell-r-F-partition}, so the actual contribution of Lemma~\ref{lemma:rotation-0} is the relation \eqref{eq:rotation-0-pFp}, describing how $p_F'$ is obtained from $p_F$.
The point is that the ordered rooted trees $\psi(p_F)$ and $\psi(p_F')$ for the lattice paths $p_F$ and $p_F'$ as in \eqref{eq:rotation-0} differ by one rotation operation, i.e., $\psi(p_F')=\trot(\psi(p_F))$ (see the bottom of Figure~\ref{fig:new-paths}).

\begin{proof}
Let $\cP_{2n}(n,n+1)$ be the set of paths defined in Section~\ref{sec:construction} for the parameter sequence $(\alpha_{2i})_{1\leq i\leq n-1}$ with $\alpha_{2i}=(0,0,\ldots,0)\seq \{0,1\}^{i-1}$ for all $i=1,\ldots,n-1$.
Fix a cycle $C$ in $\cC_{2n+1}^0$ and two consecutive elements $F(P)$ and $F(P')$ of the sequence $F(C)$, and let $\Phat\in\cP_{2n}(n,n+1)$ be such that $f_{\alpha_{2n}}(L(\Phat))=L(P)$ and $f_{\alpha_{2n}}(F(\Phat))=F(P')$.
As in the proof of Lemma~\ref{lemma:FL-matching-0}, for $\alpha_{2n}=(0,0,\ldots,0)\in\{0,1\}^{n-1}$ those relations can be simplified to show that the lattice paths $p_F:=\varphi(F(P))\in D_{2n}^{=0}(n)$ and $p_F':=\varphi(F(P'))\in D_{2n}^{=0}(n)$ satisfy the relation \eqref{eq:rotation-0-pFp} (see \eqref{eq:pF'}), proving the lemma.
\end{proof}

The following lemma characterizes the cycle structure of the 2-factor $\cC_{2n+1}^0$ in terms of ordered rooted trees and plane trees.

\begin{lemma}
\label{lemma:2f-C0}
For any $n\geq 1$ and any cycle $C$ of the 2-factor $\cC_{2n+1}^0$ defined in \eqref{eq:2-factor-0}, consider the cyclic sequence of ordered rooted trees $(T_1,\ldots,T_k):=\psi(\varphi(F(C)))$ with $F(C)$ as defined in \eqref{eq:def-FC}. Then for $i=1,\ldots,k$ we have $T_{i+1}=\trot(T_i)$, i.e., we can associate $C$ with the plane tree
\begin{equation}
\label{eq:def-TC-0}
  T^0(C):=\plane(T_1)=\cdots=\plane(T_k)\in\cT_n \enspace.
\end{equation}
Moreover, for any plane tree $T\in\cT_n$ there is a cycle $C\in\cC_{2n+1}^0$ with $T^0(C)=T$. In particular, we have $|\cC_{2n+1}^0|=|\cT_n|$.
\end{lemma}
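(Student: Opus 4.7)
My plan is to bootstrap from Lemma~\ref{lemma:rotation-0}: the lattice-path transformation $p_F \mapsto p_F'$ described there is exactly the rotation $\trot$ translated through the bijection $\psi$. First I would verify this translation. Unpacking the definition of $\psi$: an upstep at the beginning of a path corresponds to the leftmost downward edge from the root in $\psi(p)$, and the matching downstep corresponds to returning up that edge. The partition $p_F=(\upstep)\circ \ell(p_F)\circ(\downstep)\circ r(p_F)$ therefore decomposes $\psi(p_F)$ into (i) the subtree $\psi(\ell(p_F))$ hanging from the root via its leftmost child $u_1$, and (ii) the subtree $\psi(r(p_F))$ consisting of the root together with the remaining children $u_2,\ldots,u_k$. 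The rewriting $p_F'=\ell(p_F)\circ(\upstep)\circ r(p_F)\circ(\downstep)$ is precisely the operation that makes $u_1$ the new root and attaches the old root-subtree (i.e.\ $\psi(r(p_F))$) as a new rightmost child of $u_1$, which is the definition of $\trot$. Hence for $(T_i,T_{i+1})$ the two consecutive entries of $\psi(\varphi(F(C)))$ we obtain $T_{i+1}=\trot(T_i)$.

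Next I would observe that $\plane\bullet\trot=\plane$: by construction $\trot$ only relabels which vertex/neighbor is distinguished as root/leftmost child, while the underlying cyclic orderings at each vertex (hence the plane tree) are preserved. This instantly shows $\plane(T_1)=\cdots=\plane(T_k)$, so $T^0(C)$ is a well-defined element of $\cT_n$. For the bijection statement I would use a counting/orbit argument. The map $\Psi:P\mapsto\psi(\varphi(F(P)))$ is a bijection between $\cP_{2n}(n,n+1)$ and $\cT_n^*$, since the first vertices of the pairwise disjoint paths in $\cP_{2n}(n,n+1)$ are distinct, $\varphi(F(\cP_{2n}(n,n+1)))=D_{2n}^{=0}(n)$ by Lemma~\ref{lemma:FSL-D-isomorphic}, and $\psi$ is a bijection $D_{2n}^{=0}(n)\to\cT_n^*$. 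The cycles of $\cC_{2n+1}^0$ induce a partition of $\cP_{2n}(n,n+1)$ via the sequences $F(C)$, and under $\Psi$ each part $F(C)$ lies in a single $\trot$-orbit on $\cT_n^*$ (by the previous paragraph) and, because the sequence is cyclic of length $|F(C)|$ and $\trot$ is a bijection on $\cT_n^*$, actually \emph{equals} a full $\trot$-orbit. Finally, the $\trot$-orbits on $\cT_n^*$ are exactly the fibers of $\plane:\cT_n^*\to\cT_n$: they are constant on orbits by the previous paragraph, and conversely any two ordered rooted trees with the same underlying plane tree are related by iterated rotations, since $\trot$ walks the distinguished ``root corner'' around the contour of the plane tree and thereby visits every ordered-rooted representative exactly once. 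Putting these pieces together yields a bijection between cycles $C\in\cC_{2n+1}^0$ and plane trees $T\in\cT_n$, establishing both the surjectivity claim and the equality $|\cC_{2n+1}^0|=|\cT_n|$.

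I expect the main obstacle to be the first paragraph, namely cleanly justifying that the lattice-path manipulation in \eqref{eq:rotation-0-pFp} really implements $\trot$ under $\psi$. One must argue this carefully from the inductive definition of $\psi$ (single vertex / single-child extension / concatenation at the root) and make sure the ``leftmost child'' and ``rightmost-child attachment'' in the definition of $\trot$ correspond to the prefix/suffix dissection $(\upstep)\circ\ell(p_F)\circ(\downstep)\circ r(p_F)$; a convenient rigorous route is to observe that $\ell(p_F)$ sits strictly above ordinate $1$ and $r(p_F)$ at ordinates $\geq 0$, so these are exactly the two Dyck subpaths encoding the two subtrees described above. The remaining steps (plane invariance under $\trot$, orbit-equals-fiber, and the Catalan-number count) are then essentially formal.
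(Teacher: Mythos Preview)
Your proposal is correct and follows essentially the same approach as the paper. The paper's proof is much terser---it simply asserts that the lattice-path operation \eqref{eq:rotation-0} translates under $\psi$ to $\trot$ and that the surjectivity/count follow from Lemma~\ref{lemma:FSL-D-isomorphic} together with $\psi$ being a bijection---whereas you spell out both the $\psi$-translation and the orbit/fiber argument for $\plane$ explicitly; but the underlying logic is identical.
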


An illustration of the rotating ordered rooted trees in the sequences $\psi(\varphi(F(C)))$, $C\in\cC_{2n+1}^0$, and the corresponding plane trees $T^0(C)$ is shown at the top of Figure~\ref{fig:rot-trees}.

\begin{proof}
The first part of the lemma is an immediate consequence of Lemma~\ref{lemma:rotation-0}: Observe that the ordered rooted trees $\psi(p_F)$ and $\psi(p_F')$ for the lattice paths $p_F$ and $p_F'$ as in \eqref{eq:rotation-0} differ by one rotation operation, i.e., $\psi(p_F')=\trot(\psi(p_F))$.
The second part of the lemma follows from Lemma~\ref{lemma:FSL-D-isomorphic} and the fact that $\psi$ is a bijection between the lattice paths $D_{2n}^{=0}(n)$ and the trees $\cT_n^*$.
\end{proof}

The following lemma is the analogue of Lemma~\ref{lemma:rotation-0} for the 2-factor $\cC_{2n+1}^1$.

\begin{lemma}
\label{lemma:rotation-1}
For any $n\geq 1$ and any cycle $C$ of the 2-factor $\cC_{2n+1}^1$ defined in \eqref{eq:2-factor-1}, let $p_F,p_F'\in D_{2n}^{=0}(n)$ be any two consecutive elements of the cyclic sequence $\varphi(F(C))$ with $F(C)$ as defined in \eqref{eq:def-FC}, i.e., $\varphi(F(C))=(\ldots,p_F,p_F',\ldots)$. Then we have
\begin{subequations}
\label{eq:rotation-1}
\begin{align}
  p_F  &= (\upstep)\circ \ell(p_F) \circ (\downstep) \circ r(p_F) \enspace, \quad \label{eq:rotation-1-pF} \\
  p_F' &= \pi_1(\ell(p_F))\circ (\upstep)\circ \pi_1(r(p_F))\circ (\downstep) \enspace, \label{eq:rotation-1-pFp}
\end{align}
\end{subequations}
where $\ell(p_F)$ and $r(p_F)$ are defined in \eqref{eq:def-ell-r-F} and $\pi_1$ in \eqref{eq:def-pi1}.
\end{lemma}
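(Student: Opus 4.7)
The plan is to mirror the proof of Lemma~\ref{lemma:rotation-0}, but to keep careful track of a single $\pi_1$ that survives in the final formula. As in that proof, I fix a cycle $C$ of $\cC_{2n+1}^1$ and two consecutive entries $F(P),F(P')$ of the sequence $F(C)$, and let $\Phat\in\cP_{2n}(n,n+1)$ be the intermediate path with $f_{\alpha_{2n}}(L(\Phat))=L(P)$ and $f_{\alpha_{2n}}(F(\Phat))=F(P')$. Set $p_F:=\varphi(F(P))$, $p_L:=\varphi(L(P))$, $p_F':=\varphi(F(P'))$, $\phat_F:=\varphi(F(\Phat))$ and $\phat_L:=\varphi(L(\Phat))$. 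Two ingredients enter the calculation: the paths $\cP_{2n}(n,n+1)$ were built with $\alpha_{2i}=(1,\ldots,1)$ for $i=1,\ldots,n-1$, so that Lemma~\ref{lemma:FL-matching-1} applies to both $P$ and $\Phat$; and the last construction step in $\cC_{2n+1}^1$ uses $\alpha_{2n}=(0,\ldots,0)$, so $f_{\alpha_{2n}}=\overline{\rev}$ (not $\overline{\rev}\bullet\pi_1$ as in Lemma~\ref{lemma:FL-matching-1} itself).

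I would first record two easy auxiliary identities on lattice paths of length $2n$: the involution $\pi_1\bullet\pi_1=\id$, and the commutation $\overline{\rev}\bullet\pi_1=\pi_1\bullet\overline{\rev}$. The latter holds because the reflection $x\mapsto 2n+1-x$ preserves the set of swap pairs $\{(2i,2i+1):1\le i\le n-1\}$ setwise. From $\overline{\rev}(\phat_L)=p_L$ and $\overline{\rev}(\phat_F)=p_F'$ I would deduce the boundary translations $\ell(p_L)=\overline{\rev}(r(\phat_L))$, $r(p_L)=\overline{\rev}(\ell(\phat_L))$, and, using the partition \eqref{eq:ell-r-F-partition} for $\phat_F$,
\begin{equation*}
  p_F' \;=\; \overline{\rev}(r(\phat_F))\circ(\upstep)\circ\overline{\rev}(\ell(\phat_F))\circ(\downstep)\enspace.
\end{equation*}

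Next I would substitute Lemma~\ref{lemma:FL-matching-1} applied to $\Phat$, namely $\ell(\phat_F)=\pi_1(\ell(\phat_L))$ and $r(\phat_F)=r(\phat_L)$, into this expression, push the inner $\overline{\rev}$ past $\pi_1$ via the commutation, and translate back to $p_L$ via the boundary identities to get $p_F'=\ell(p_L)\circ(\upstep)\circ\pi_1(r(p_L))\circ(\downstep)$. Finally, applying Lemma~\ref{lemma:FL-matching-1} to $P$ and using the involution property yields $\ell(p_L)=\pi_1(\ell(p_F))$ and $r(p_L)=r(p_F)$, which gives \eqref{eq:rotation-1-pFp}; the relation \eqref{eq:rotation-1-pF} is just the tautological partition \eqref{eq:ell-r-F-partition} for $p_F\in D_{2n}^{=0}(n)$. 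The whole chain can be written as a single display in the spirit of \eqref{eqp:pF'}.

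The main obstacle is purely bookkeeping the $\pi_1$ factors. Three separate $\pi_1$'s enter the computation (two from the applications of Lemma~\ref{lemma:FL-matching-1} to $P$ and $\Phat$, one from commuting $\overline{\rev}$ past $\pi_1$), and they must end up as exactly one $\pi_1$ wrapped around each of $\ell(p_F)$ and $r(p_F)$ in the final expression. I would state the involution and commutation identities explicitly at the start so that these cancellations are transparent; once they are in place, the argument is a direct four-step chain of substitutions with no further subtlety.
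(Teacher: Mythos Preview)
Your proposal is correct and follows essentially the same approach as the paper's proof: both set up the intermediate path $\Phat$, use that the last step has $\alpha_{2n}=(0,\ldots,0)$ so $f_{\alpha_{2n}}=\overline{\rev}$, apply Lemma~\ref{lemma:FL-matching-1} to $P$ and $\Phat$, and then collapse the resulting chain using $\overline{\rev}\bullet\pi_1=\pi_1\bullet\overline{\rev}$ and $\pi_1\bullet\pi_1=\id$. The only cosmetic difference is that you isolate the intermediate expression $p_F'=\ell(p_L)\circ(\upstep)\circ\pi_1(r(p_L))\circ(\downstep)$ before the final substitution, whereas the paper combines the last two steps into one.
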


\begin{proof}
Let $\cP_{2n}(n,n+1)$ be the set of paths defined in Section~\ref{sec:construction} for the parameter sequence $(\alpha_{2i})_{1\leq i\leq n-1}$ with $\alpha_{2i}=(1,1,\ldots,1)\seq \{0,1\}^{i-1}$ for all $i=1,\ldots,n-1$.
Note that according to \eqref{eq:2-factor-1}, to construct $\cC_{2n+1}^1$ the parameter $\alpha_{2n}=(0,0,\ldots,0)\in\{0,1\}^{n-1}$ is used in the last construction step, but the paths $\cP_{2n}(n,n+1)$ do not depend on $\alpha_{2n}$, i.e., Lemma~\ref{lemma:FL-matching-1} applies.
Fix a cycle $C$ in $\cC_{2n+1}^1$ and two consecutive elements $F(P)$ and $F(P')$ of the sequence $F(C)$, and let $\Phat\in\cP_{2n}(n,n+1)$ be such that
\begin{subequations}
\label{eq:f-alpha-FLolP-FLPp}
\begin{align}
  f_{\alpha_{2n}}(L(\Phat)) &= L(P) \enspace, \\
  f_{\alpha_{2n}}(F(\Phat)) &= F(P') \enspace.
\end{align}
\end{subequations}
We define the lattice paths $p_F:=\varphi(F(P))\in D_{2n}^{=0}(n)$, $p_L:=\varphi(L(P))\in D_{2n}^-(n)$, $\phat_F:=\varphi(F(\Phat))\in D_{2n}^{=0}(n)$, $\phat_L:=\varphi(L(\Phat))\in D_{2n}^-(n)$, and $p_F':=\varphi(F(P'))\in D_{2n}^{=0}(n)$.
By the definition in \eqref{eq:f-alpha}, for $\alpha_{2n}=(0,0,\ldots,0)\in\{0,1\}^{n-1}$ we have $f_{\alpha_{2n}}=\ol{\rev}$, so the relations \eqref{eq:f-alpha-FLolP-FLPp} show that the corresponding lattice paths satisfy
\begin{align}
  \big(\ol{\rev}(r(\phat_L)),\ol{\rev}(\ell(\phat_L))\big) &= (\ell(p_L),r(p_L)) \enspace, \label{eq:ell-r-olpL-pL-flippp} \\
  \ol{\rev}(\phat_F) &= p_F' \label{eq:rev-phFp} \enspace.
\end{align}
Furthermore, by Lemma~\ref{lemma:FL-matching-1} we know that
\begin{subequations}
\begin{align}
  (\ell(p_F),r(p_F)) &= (\pi_1(\ell(p_L)),r(p_L)) \enspace, \label{eq:FL} \\
  (\ell(\phat_F),r(\phat_F)) &= (\pi_1(\ell(\phat_L)),r(\phat_L)) \enspace. \label{eq:FHh}
\end{align}
\end{subequations}
Combining these observations we obtain
\begin{equation*}
\begin{split}
  p_F' &\eqByM{\eqref{eq:rev-phFp}, \eqref{eq:ell-r-F-partition}} \ol{\rev}\big((\upstep)\circ \ell(\phat_F) \circ(\downstep)\circ r(\phat_F)\big) \\
    &\eqBy{eq:FHh} \ol{\rev}(r(\phat_L))\circ (\upstep)\circ \ol{\rev}(\pi_1(\ell(\phat_L)))\circ (\downstep) \\
    &\eqByM{\eqref{eq:ell-r-olpL-pL-flippp},\eqref{eq:FL}} \pi_1(\ell(p_F)) \circ (\upstep) \circ \pi_1(r(p_F)) \circ (\downstep) \enspace,
\end{split}
\end{equation*}
where we used $\ol{\rev}\bullet\pi_1=\pi_1\bullet \ol{\rev}$ and $\pi_1\bullet\pi_1=\id$ in the last step.
This completes the proof.
\end{proof}

\begin{figure}
\centering
\PSforPDF{
 \psfrag{2f0}{$\cC_{2n+1}^0=\{C_1,C_2,C_3\}$, $n=4$}
 \psfrag{2f1}{$\cC_{2n+1}^1=\{\Chat_1,\Chat_2,\Chat_3\}$, $n=4$}
 \psfrag{c1}{$C_1$}
 \psfrag{c2}{$C_2$}
 \psfrag{c3}{$C_3$}
 \psfrag{ch1}{$\Chat_1$}
 \psfrag{ch2}{$\Chat_2$}
 \psfrag{ch3}{$\Chat_3$} 
 \psfrag{tc1}{$T^0(C_1)$}
 \psfrag{tc2}{$T^0(C_2)$}
 \psfrag{tc3}{$T^0(C_3)$}
 \psfrag{tch1}{$T^1(\Chat_1)$}
 \psfrag{tch2}{$T^1(\Chat_2)$}
 \psfrag{tch3}{$T^1(\Chat_3)$}
 \psfrag{g0}{$g_0=\trot$}
 \psfrag{g1}{$g_1$}
 \psfrag{h}{\Large $h$}
 \psfrag{t01}{$T_1$}
 \psfrag{t02}{$T_2$}
 \psfrag{t03}{$T_3$}
 \psfrag{t04}{$T_4$}
 \psfrag{t05}{$T_5$}
 \psfrag{t06}{$T_6$}
 \psfrag{t07}{$T_7$}
 \psfrag{t08}{$T_8$}
 \psfrag{t09}{$T_9$}
 \psfrag{t10}{$T_{10}$}
 \psfrag{tel}{$T_{11}$}
 \psfrag{t12}{$T_{12}$}
 \psfrag{t13}{$T_{13}$}
 \psfrag{t14}{$T_{14}$}
 \psfrag{th01}{$\That_1$}
 \psfrag{th02}{$\That_2$}
 \psfrag{th03}{$\That_3$}
 \psfrag{th04}{$\That_4$}
 \psfrag{th05}{$\That_5$}
 \psfrag{th06}{$\That_6$}
 \psfrag{th07}{$\That_7$}
 \psfrag{th08}{$\That_8$}
 \psfrag{th09}{$\That_9$}
 \psfrag{th10}{$\That_{10}$}
 \psfrag{thel}{$\That_{11}$}
 \psfrag{th12}{$\That_{12}$}
 \psfrag{th13}{$\That_{13}$}
 \psfrag{th14}{$\That_{14}$}
 \psfrag{fp01}{\footnotesize $F(P_1)$}
 \psfrag{fp02}{\footnotesize $F(P_2)$}
 \psfrag{fp03}{\footnotesize $F(P_3)$}
 \psfrag{fp04}{\footnotesize $F(P_4)$}
 \psfrag{fp05}{\footnotesize $F(P_5)$}
 \psfrag{fp06}{\footnotesize $F(P_6)$}
 \psfrag{fp07}{\footnotesize $F(P_7)$}
 \psfrag{fp08}{\footnotesize $F(P_8)$}
 \psfrag{fp09}{\footnotesize $F(P_9)$}
 \psfrag{fp10}{\footnotesize $F(P_{10})$}
 \psfrag{fpel}{\footnotesize $F(P_{11})$}
 \psfrag{fp12}{\footnotesize $F(P_{12})$}
 \psfrag{fp13}{\footnotesize $F(P_{13})$}
 \psfrag{fp14}{\footnotesize $F(P_{14})$}
 \psfrag{fph01}{\footnotesize $F(\Phat_1)$}
 \psfrag{fph02}{\footnotesize $F(\Phat_2)$}
 \psfrag{fph03}{\footnotesize $F(\Phat_3)$}
 \psfrag{fph04}{\footnotesize $F(\Phat_4)$}
 \psfrag{fph05}{\footnotesize $F(\Phat_5)$}
 \psfrag{fph06}{\footnotesize $F(\Phat_6)$}
 \psfrag{fph07}{\footnotesize $F(\Phat_7)$}
 \psfrag{fph08}{\footnotesize $F(\Phat_8)$}
 \psfrag{fph09}{\footnotesize $F(\Phat_9)$}
 \psfrag{fph10}{\footnotesize $F(\Phat_{10})$}
 \psfrag{fphel}{\footnotesize $F(\Phat_{11})$}
 \psfrag{fph12}{\footnotesize $F(\Phat_{12})$}
 \psfrag{fph13}{\footnotesize $F(\Phat_{13})$}
 \psfrag{fph14}{\footnotesize $F(\Phat_{14})$}
 \psfrag{pu}[c][c][1][90]{$\cP_{2n}(n,n+1)$}
 \psfrag{pl}[c][c][1][90]{$f_{\alpha_{2n}}(\cP_{2n}(n,n+1))$, $\alpha_{2n}=(0,0,0)$}
 \includegraphics{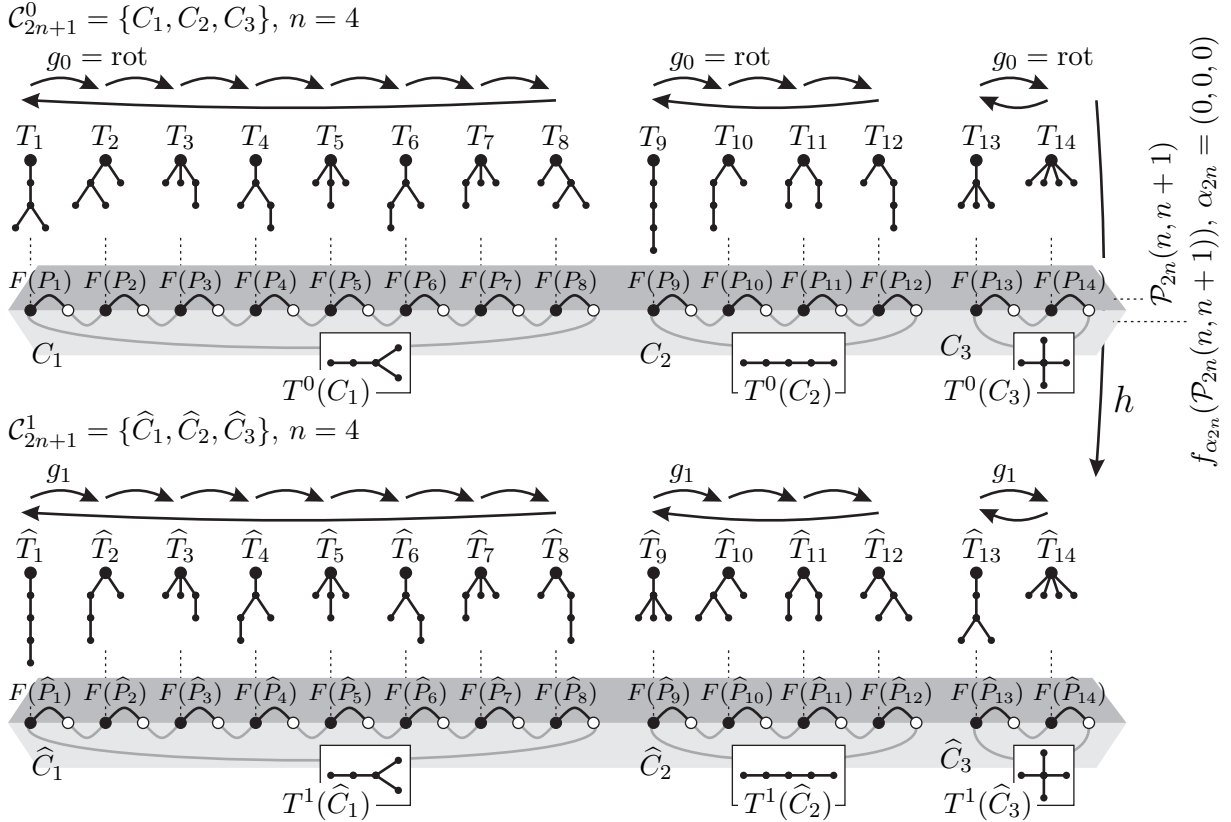}
}
\caption{Illustration of Lemmas~\ref{lemma:rotation-0}--\ref{lemma:2f-C1}. The figure shows the cyclic sequences $F(C)$, $C\in\cC_{2n+1}^0$, (top) and $F(\Chat)$, $\Chat\in\cC_{2n+1}^1$, (bottom) for $n=4$. For each element $F(P_i)$ or $F(\Phat_i)$ of the sequences $F(C)$ and $F(\Chat)$, respectively, the corresponding ordered rooted tree $T_i:=\psi(\varphi(F(P_i)))\in\cT_n^*$ and $\That_i:=\psi(\varphi(F(\Phat_i)))\in\cT_n^*$ is shown above it. The plane trees $T^0(C)\in\cT_n$, $C\in\cC_{2n+1}^0$, and $T^1(\Chat)\in\cT_n$, $\Chat\in\cC_{2n+1}^1$, as defined in \eqref{eq:def-TC-0} and \eqref{eq:def-TC-1}, respectively, are shown below each cycle.
Note how in $\cC_{2n+1}^0$ the ordered rooted trees rotate along each cycle, i.e., $g_0=\trot$.
}
\label{fig:rot-trees}
\end{figure}

While the relations \eqref{eq:rotation-0} in Lemma~\ref{lemma:rotation-0} can be interpreted straightforwardly as the rotation operation of ordered rooted trees, the relations \eqref{eq:rotation-1} in Lemma~\ref{lemma:rotation-1} are more difficult to understand: They correspond to a tree rotation plus an application of $\pi_1$ to the left and right subtree (compare \eqref{eq:rotation-0-pFp} with \eqref{eq:rotation-1-pFp}). This additional application of $\pi_1$ changes the subtrees in a way that seems rather chaotic, but that (maybe surprisingly) does not change the overall cycle structure (see the bottom of Figure~\ref{fig:rot-trees}). In the following we establish a bijection between the sequences $F(C)$, $C\in\cC_{2n+1}^0$, and the sequences $F(C)$, $C\in\cC_{2n+1}^1$. In particular, we will see that both 2-factors have the same number of cycles, namely $|\cC_{2n+1}^0|=|\cC_{2n+1}^1|=|\cT_n|$.

\textit{The mappings $g_0$, $g_1$ and $h$.}
For $n\geq 1$ and any lattice path $p\in D_{2n}^{=0}(n)$, $p=(\upstep)\circ \ell(p)\circ (\downstep)\circ r(p)$ (recall \eqref{eq:ell-r-F-partition}), we define lattice paths $g_0(p),g_1(p)\in D_{2n}^{=0}(n)$ by
\begin{align}
  g_0(p) &:= \ell(p)\circ (\upstep)\circ r(p)\circ (\downstep) \enspace, \label{eq:def-g0} \\
  g_1(p) &:= \pi_1(\ell(p))\circ (\upstep)\circ \pi_1(r(p))\circ (\downstep) \enspace, \label{eq:def-g1}
\end{align}
where $\pi_1$ is defined in \eqref{eq:def-pi1} (these definitions are motivated by \eqref{eq:rotation-0-pFp} and \eqref{eq:rotation-1-pFp}).

Furthermore, for $n\geq 0$ and any lattice path $p\in D_{2n}^{=0}(n)$ we inductively define a lattice path $h(p)\in D_{2n}^{=0}(n)$ as follows:
If $p=()$ then we define
\begin{subequations}
\label{eq:def-h}
\begin{equation}
\label{eq:def-h-indbase}
  h(p):=() \enspace.
\end{equation}
If $p$ has the form $p=(\upstep)\circ p'\circ (\downstep)$ with $p'\in D_{2n-2}^{=0}(n-1)$, then we set
\begin{equation}
\label{eq:def-h-indstep1}
  h(p):= (\upstep)\circ \pi_1(h(p'))\circ (\downstep) \enspace,
\end{equation}
where $\pi_1$ is defined in \eqref{eq:def-pi1}.
Otherwise $p$ has the form $p=p_1\circ p_2$ with $p_i\in D_{2n_i}^{=0}(n_i)$ and $n_i\geq 1$, $i\in\{1,2\}$. In this case we define
\begin{equation}
\label{eq:def-h-indstep2}
  h(p):=h(p_1)\circ h(p_2) \enspace.
\end{equation}
\end{subequations}
Note that the definition of $h(p)$ in the last case is independent of the partition of $p$ into subpaths $p_1$ and $p_2$, even if there are several different such partitions.

It is easy to check that the mappings $g_0$, $g_1$ and $h$ are bijections on $D_{2n}^{=0}(n)$.
Via the bijection $\psi$ between lattice paths and ordered rooted trees, these mappings can be extended naturally to ordered rooted trees (recall \eqref{eq:biject-map}). Figure~\ref{fig:rot-trees} shows how $g_0$, $g_1$ and $h$ operate on all 14 ordered rooted trees with 4 edges.
Note that by Lemma~\ref{lemma:rotation-0}, we have $g_0=\trot$ (so $g_0$ simply rotates a tree).

\begin{lemma}
\label{lemma:g0-via-g1}
The mappings $g_0$, $g_1$ and $h$ defined in \eqref{eq:def-g0}, \eqref{eq:def-g1} and \eqref{eq:def-h} satisfy $h\bullet g_0=g_1\bullet h$.
\end{lemma}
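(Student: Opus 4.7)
The plan is to prove the identity by directly unfolding both sides via the recursive definition of $h$. Writing $p=(\upstep)\circ\ell\circ(\downstep)\circ r$ with $\ell:=\ell(p)$ and $r:=r(p)$ (see \eqref{eq:ell-r-F-partition}), both $\ell$ and $r$ are Dyck paths (possibly empty) with fewer than $2n$ steps, so $h$ is defined on them.

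First I would compute $h(p)$. Since $p$ is the concatenation of the atomic Dyck path $(\upstep)\circ\ell\circ(\downstep)$ with $r$, rules \eqref{eq:def-h-indstep2} and \eqref{eq:def-h-indstep1} together with the convention $h(())=()$ yield
\begin{equation*}
  h(p)=(\upstep)\circ\pi_1(h(\ell))\circ(\downstep)\circ h(r)\enspace.
\end{equation*}
Then I would read off $\ell(h(p))=\pi_1(h(\ell))$ and $r(h(p))=h(r)$, substitute these into \eqref{eq:def-g1}, and use the involution $\pi_1\bullet\pi_1=\id$ to conclude that
\begin{equation*}
  g_1(h(p))=h(\ell)\circ(\upstep)\circ\pi_1(h(r))\circ(\downstep)\enspace.
\end{equation*}
The same mechanism applied to $g_0(p)=\ell\circ\bigl((\upstep)\circ r\circ(\downstep)\bigr)$ gives at once
\begin{equation*}
  h(g_0(p))=h(\ell)\circ(\upstep)\circ\pi_1(h(r))\circ(\downstep)\enspace,
\end{equation*}
and the two expressions coincide.

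The main obstacle — and the only substantive point beyond routine unfolding — is verifying that the expression for $h(p)$ above really is the canonical decomposition in the sense of \eqref{eq:def-ell-r-F}, i.e., that $\pi_1(h(\ell))$ stays at height $\geq 0$ so that $(\upstep)\circ\pi_1(h(\ell))\circ(\downstep)$ is atomic (touching the $x$-axis only at its endpoints). I would handle this exactly as in the proof of Lemma~\ref{lemma:dyck-paths-invariant}: since $h(\ell)\in D_{|\ell|}^{=0}(|\ell|/2)$ has even $y$-coordinates at even abscissas, any up-down pair that $\pi_1$ would swap sits at height $\geq 2$, so applying $\pi_1$ cannot push the path below the axis. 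The degenerate cases $\ell=()$ or $r=()$ are absorbed transparently by $h(())=()$, so no case split or induction on $n$ is needed in the end — the identity drops out after a single round of structural unfolding.
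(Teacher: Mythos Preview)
Your proof is correct and follows essentially the same route as the paper's: both sides are unfolded via the recursive definition of $h$ applied to the decomposition $p=(\upstep)\circ\ell(p)\circ(\downstep)\circ r(p)$, and both reduce to $h(\ell(p))\circ(\upstep)\circ\pi_1(h(r(p)))\circ(\downstep)$ using $\pi_1\bullet\pi_1=\id$. The one point you make explicit that the paper leaves implicit is the verification that $\ell(h(p))=\pi_1(h(\ell))$ and $r(h(p))=h(r)$, i.e., that $\pi_1(h(\ell))$ remains a Dyck path so the decomposition of $h(p)$ matches \eqref{eq:def-ell-r-F}; your appeal to the parity argument of Lemma~\ref{lemma:dyck-paths-invariant} is the right way to close this gap.
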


\begin{proof}
We show that for any $n\geq 1$ and any lattice path $p\in D_{2n}^{=0}(n)$ we have $h(g_0(p))=g_1(h(p))$.
Using the representation $p=(\upstep)\circ \ell(p)\circ(\downstep)\circ r(p)$ from \eqref{eq:ell-r-F-partition} we obtain
\begin{align*}
  h(g_0(p)) &\eqBy{eq:def-g0}  h\big(\ell(p)\circ(\upstep)\circ r(p)\circ (\downstep)\big) \\
            &\eqBy{eq:def-h-indstep2} h(\ell(p))\circ h((\upstep)\circ r(p)\circ (\downstep)) \\
            &\eqBy{eq:def-h-indstep1} h(\ell(p))\circ(\upstep)\circ \pi_1(h(r(p)))\circ (\downstep)
\end{align*}
and
\begin{align*}
  g_1(h(p)) &\eqBy{eq:def-h-indstep2} g_1\big(h((\upstep)\circ \ell(p)\circ (\downstep))\circ h(r(p))\big) \\
            &\eqBy{eq:def-h-indstep1} g_1\big((\upstep)\circ \pi_1(h(\ell(p))) \circ (\downstep) \circ h(r(p))\big) \\
            &\eqBy{eq:def-g1}  h(\ell(p))\circ (\upstep)\circ \pi_1(h(r(p)))\circ (\downstep) \enspace,
\end{align*}
where we used $\pi_1\bullet\pi_1=\id$ in the last step. This proves the claim.
\end{proof}

The next lemma shows that, intuitively speaking, the 2-factor $\cC_{2n+1}^1$ is the image of the 2-factor $\cC_{2n+1}^0$ under the mapping $h$ (see Figure~\ref{fig:rot-trees}), i.e., the mapping $h$ is the desired link between these 2-factors. This allows a precise understanding of the cycle structure of $\cC_{2n+1}^1$ via the cycle structure of $\cC_{2n+1}^0$. Recall from Lemma~\ref{lemma:2f-C0} that the cycle structure of $\cC_{2n+1}^0$ can be described by the rotation of ordered rooted trees and the corresponding plane trees.

\begin{lemma}
\label{lemma:2f01-bijection}
Let $\cC_{2n+1}^0$ and $\cC_{2n+1}^1$ be the 2-factors defined in \eqref{eq:2-factor-0} and \eqref{eq:2-factor-1}. The mapping $h$ defined in \eqref{eq:def-h} is a bijection between the cyclic sequences in the sets $\{\varphi(F(C))\mid C\in\cC_{2n+1}^0\}$ and $\{\varphi(F(C))\mid C\in\cC_{2n+1}^1\}$, where $F(C)$ is defined in \eqref{eq:def-FC}.
\end{lemma}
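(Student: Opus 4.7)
The plan is to identify the cyclic sequences $\varphi(F(C))$ with the orbits of two bijections on $D_{2n}^{=0}(n)$, and then use the conjugation relation $h\bullet g_0=g_1\bullet h$ from Lemma~\ref{lemma:g0-via-g1} to transport one family of orbits to the other.

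First, I would observe that by Lemma~\ref{lemma:FSL-D-isomorphic}, for any parameter sequence the map $\varphi\bullet F$ is a bijection between $\cP_{2n}(n,n+1)$ and $D_{2n}^{=0}(n)$. Since each path $P\in\cP_{2n}(n,n+1)$ contributes exactly one vertex $F(P)\circ(0)$ to exactly one cycle of $\cC_{2n+1}$, the multiset of cyclic sequences $\{\varphi(F(C))\mid C\in\cC_{2n+1}\}$ partitions $D_{2n}^{=0}(n)$ (as a disjoint union of cyclic sequences).

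Next, I would apply Lemma~\ref{lemma:rotation-0} to the 2-factor $\cC_{2n+1}^0$: comparing \eqref{eq:rotation-0-pFp} with the definition \eqref{eq:def-g0} of $g_0$, any two consecutive elements $p_F,p_F'$ of $\varphi(F(C))$ satisfy $p_F'=g_0(p_F)$. Hence the cyclic sequences in $\{\varphi(F(C))\mid C\in\cC_{2n+1}^0\}$ are precisely the orbits of the bijection $g_0$ on $D_{2n}^{=0}(n)$. In exactly the same way, comparing \eqref{eq:rotation-1-pFp} with \eqref{eq:def-g1} and invoking Lemma~\ref{lemma:rotation-1}, the cyclic sequences in $\{\varphi(F(C))\mid C\in\cC_{2n+1}^1\}$ are precisely the orbits of the bijection $g_1$ on $D_{2n}^{=0}(n)$.

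The rest is abstract nonsense about conjugation of bijections. Since $h$ is a bijection on $D_{2n}^{=0}(n)$ and by Lemma~\ref{lemma:g0-via-g1} we have $h\bullet g_0=g_1\bullet h$, $h$ maps every $g_0$-orbit onto a $g_1$-orbit: if $(p_1,\ldots,p_k)$ is a cyclic $g_0$-orbit, i.e.\ $p_{i+1}=g_0(p_i)$ (indices modulo $k$), then $h(p_{i+1})=h(g_0(p_i))=g_1(h(p_i))$, so $(h(p_1),\ldots,h(p_k))$ is a cyclic $g_1$-orbit. Injectivity of this assignment of orbits follows from injectivity of $h$, and surjectivity from the fact that $h$ is a bijection on $D_{2n}^{=0}(n)$ and that the $g_1$-orbits partition this set (so every element of every $g_1$-orbit is hit by $h$, forcing the whole orbit to be hit as the image of a single $g_0$-orbit). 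Combined with the identifications in the previous paragraph, this gives the claimed bijection between $\{\varphi(F(C))\mid C\in\cC_{2n+1}^0\}$ and $\{\varphi(F(C))\mid C\in\cC_{2n+1}^1\}$.

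There is no real obstacle here; everything has been set up so that the lemma reduces to a routine conjugation argument. The only small point to be careful about is that the statement concerns cyclic sequences rather than sets, so I would explicitly note that $h$ preserves the cyclic order (which is automatic from $h\bullet g_0=g_1\bullet h$, since it maps the successor in one sequence to the successor in the other). In particular, this immediately yields $|\cC_{2n+1}^0|=|\cC_{2n+1}^1|=|\cT_n|$, which together with Lemma~\ref{lemma:2f-C0} will be used to set up the tree correspondence for $\cC_{2n+1}^1$ in Lemma~\ref{lemma:2f-C1}.
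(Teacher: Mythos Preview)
Your proof is correct and follows essentially the same approach as the paper: identify the cyclic sequences $\varphi(F(C))$ for $\cC_{2n+1}^0$ and $\cC_{2n+1}^1$ with the orbits of $g_0$ and $g_1$ on $D_{2n}^{=0}(n)$ via Lemmas~\ref{lemma:rotation-0} and~\ref{lemma:rotation-1}, then use the conjugation relation $h\bullet g_0=g_1\bullet h$ from Lemma~\ref{lemma:g0-via-g1} to transport orbits. Your version spells out the orbit-to-orbit injectivity/surjectivity and cyclic-order preservation a bit more explicitly, but the argument is the same.
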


\begin{proof}
By Lemma~\ref{lemma:FSL-D-isomorphic}, Lemma~\ref{lemma:rotation-0} and the definition in \eqref{eq:def-g0} we have
\begin{equation}
\label{eq:FC-g0}
  \{\varphi(F(C))\mid C\in\cC_{2n+1}^0\} = \big\{(p,g_0^1(p),g_0^2(p),\ldots) \mid p\in D_{2n}^{=0}(n) \big\} \enspace.
\end{equation}
Similarly, by Lemma~\ref{lemma:FSL-D-isomorphic}, Lemma~\ref{lemma:rotation-1} and the definition in \eqref{eq:def-g1} we have
\begin{equation}
\label{eq:FC-g1}
  \{\varphi(F(C))\mid C\in\cC_{2n+1}^0\} = \big\{(p,g_1^1(p),g_1^2(p),\ldots) \mid p\in D_{2n}^{=0}(n) \big\} \enspace.
\end{equation}
By Lemma~\ref{lemma:g0-via-g1} we have $g_1=h\bullet g_0\bullet h^{-1}$, i.e., the mapping $h$ is a bijection between the cyclic sequences in the sets on the right hand side of \eqref{eq:FC-g0} and \eqref{eq:FC-g1}.
\end{proof}

Combining Lemma~\ref{lemma:2f-C0} and Lemma~\ref{lemma:2f01-bijection} immediately yields the following characterization of the cycle structure of the 2-factor $\cC_{2n+1}^1$. This lemma is the analogue of Lemma~\ref{lemma:2f-C0} for the 2-factor $\cC_{2n+1}^1$.

\begin{lemma}
\label{lemma:2f-C1}
For any $n\geq 1$ and any cycle $C$ of the 2-factor $\cC_{2n+1}^1$ defined in \eqref{eq:2-factor-1}, consider the cyclic sequence of ordered rooted trees $(T_1,\ldots,T_k):=\psi(\varphi(F(C)))$ with $F(C)$ as defined in \eqref{eq:def-FC}. Then for $i=1,\ldots,k$ we have $h^{-1}(T_{i+1})=\trot(h^{-1}(T_i))$, where $h$ is defined in \eqref{eq:def-h}. I.e., we can associate $C$ with the plane tree
\begin{equation}
\label{eq:def-TC-1}
  T^1(C):=\plane(h^{-1}(T_1))=\cdots=\plane(h^{-1}(T_k))\in\cT_n \enspace.
\end{equation}
Moreover, for any plane tree $T\in\cT_n$ there is a cycle $C\in\cC_{2n+1}^1$ with $T^1(C)=T$. In particular, we have $|\cC_{2n+1}^1|=|\cT_n|$.
\end{lemma}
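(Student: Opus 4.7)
The statement is essentially immediate from Lemma~\ref{lemma:2f-C0} and Lemma~\ref{lemma:2f01-bijection}, so the plan is to transport the first lemma along the bijection provided by the second. Given a cycle $C\in\cC_{2n+1}^1$, I would first use Lemma~\ref{lemma:2f01-bijection} to select the unique cycle $C'\in\cC_{2n+1}^0$ whose associated cyclic sequence of lattice paths satisfies $\varphi(F(C'))=h^{-1}(\varphi(F(C)))$ (with $h^{-1}$ applied elementwise). Writing $(T_1,\ldots,T_k):=\psi(\varphi(F(C)))$ and $(T'_1,\ldots,T'_k):=\psi(\varphi(F(C')))$, the convention~\eqref{eq:biject-map} under which $h$ was extended from lattice paths to ordered rooted trees says precisely that $h^{-1}(T_i)=T'_i$ for all $i$.

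Applying Lemma~\ref{lemma:2f-C0} to $C'$ gives $T'_{i+1}=\trot(T'_i)$, which combined with the previous identity yields $h^{-1}(T_{i+1})=\trot(h^{-1}(T_i))$, establishing the rotation claim. Since a single rotation step preserves the underlying plane tree (this is built into the definitions of $\trot$ and $\plane$), all the trees $h^{-1}(T_1),\ldots,h^{-1}(T_k)$ map to the same element of $\cT_n$ under $\plane(\cdot)$, so $T^1(C)$ is well defined, and in fact $T^1(C)=T^0(C')$.

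For the surjectivity statement, I would start from an arbitrary $T\in\cT_n$, use Lemma~\ref{lemma:2f-C0} to produce a cycle $C'\in\cC_{2n+1}^0$ with $T^0(C')=T$, and then push $\varphi(F(C'))$ forward through $h$ via Lemma~\ref{lemma:2f01-bijection} to obtain the required $C\in\cC_{2n+1}^1$; by the previous paragraph $T^1(C)=T^0(C')=T$. The identity $|\cC_{2n+1}^1|=|\cT_n|$ then follows at once by chaining the bijections $\cC_{2n+1}^1\leftrightarrow\cC_{2n+1}^0$ (Lemma~\ref{lemma:2f01-bijection}) and $\cC_{2n+1}^0\leftrightarrow\cT_n$ (Lemma~\ref{lemma:2f-C0}).

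There is no substantive obstacle in this proof; the only point that requires care is the bookkeeping around~\eqref{eq:biject-map}, namely that the extension of $h$ to ordered rooted trees via $\psi$ is exactly what converts the lattice-path identity $g_1=h\bullet g_0\bullet h^{-1}$ (Lemma~\ref{lemma:g0-via-g1}) into the tree identity $\trot(h^{-1}(T_i))=h^{-1}(\trot\text{-image of }T_i)$ that is being invoked above.
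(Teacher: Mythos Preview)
Your proposal is correct and follows essentially the same approach as the paper, which simply states that the lemma is obtained by combining Lemma~\ref{lemma:2f-C0} and Lemma~\ref{lemma:2f01-bijection}. You have spelled out the details of this combination carefully, including the surjectivity and counting arguments, which the paper leaves implicit.
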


An illustration of the ordered rooted trees in the sequences $\psi(\varphi(F(C)))$, $C\in\cC_{2n+1}^1$, and the corresponding plane trees $T^1(C)$ is shown at the bottom of Figure~\ref{fig:rot-trees}.

\subsection{Edge-shifting properties of the mapping \texorpdfstring{$h$}{h}}
\label{sec:edge-shift}

In this section we formulate two lemmas that describe how the mapping $h$ defined in the previous section operates on certain pairs of ordered rooted trees. Lemma~\ref{lemma:hinv-is-tau1} and \ref{lemma:hinv-is-tau2} below describe how $h$ operates on pairs of trees as shown at the top left and top right of Figure~\ref{fig:hinv-tau}, respectively (the images of these pairs of trees under $h$ are shown below). Note that each of these pairs of trees differs only in a single edge.
In Section~\ref{sec:structure-flipp} below we will see that the pairs of trees in these lemmas that arise as images under $h$ correspond to pairs of first vertices of flippable pairs in the set $\cX_{2n}^1(n,n+1)$ defined in Section~\ref{sec:flipp}. I.e., these lemmas will become crucial in proving Proposition~\ref{prop:main1} and \ref{prop:main2} when we analyze which edges are present in the graph $\cG(\cC_{2n+1}^1,\cX_{2n}^1(n,n+1))$.

For technical reasons we formulate and prove these two lemmas in terms of lattice paths, rather than ordered rooted trees (ordered rootes trees are easier to visualize, but harder to deal with formally).
All lattice paths we work with in this section are from the set $D_{2n}^{=0}(n)$.

\begin{figure}
\centering
\PSforPDF{
 \psfrag{phat}{$\psi(\phat)$}
 \psfrag{pcheck}{$\psi(\pcheck)$}
 \psfrag{hphat}{$\psi(h(\phat))$}
 \psfrag{hpcheck}{$\psi(h(\pcheck))$}
 \psfrag{k}{$k$}
 \psfrag{h}{$h$} 
 \psfrag{hq}{$\psi(h(q))$}
 \psfrag{hq0}{$\psi(h(q_0))$}
 \psfrag{qp}{$\psi(q')$}
 \psfrag{q}{$\psi(q)$}
 \psfrag{q0}{$\psi(q_0)$}
 \psfrag{q1}{$\psi(q_1)$}
 \psfrag{q2}{$\psi(q_2)$}
 \psfrag{qkp1}{$\psi(q_{k+1})$}
 \psfrag{phat1}{$\psi(\phat_1)$}
 \psfrag{phatk}{$\psi(\phat_k)$}
 \psfrag{phatkp1}{$\psi(\phat_{k+1})$}
 \psfrag{pcheck1}{$\psi(\pcheck_1)$}
 \psfrag{pcheckk}{$\psi(\pcheck_k)$}
 \psfrag{pcheckkp1}{$\psi(\pcheck_{k+1})$}
 \psfrag{vdots}{$\vdots$}
 \includegraphics{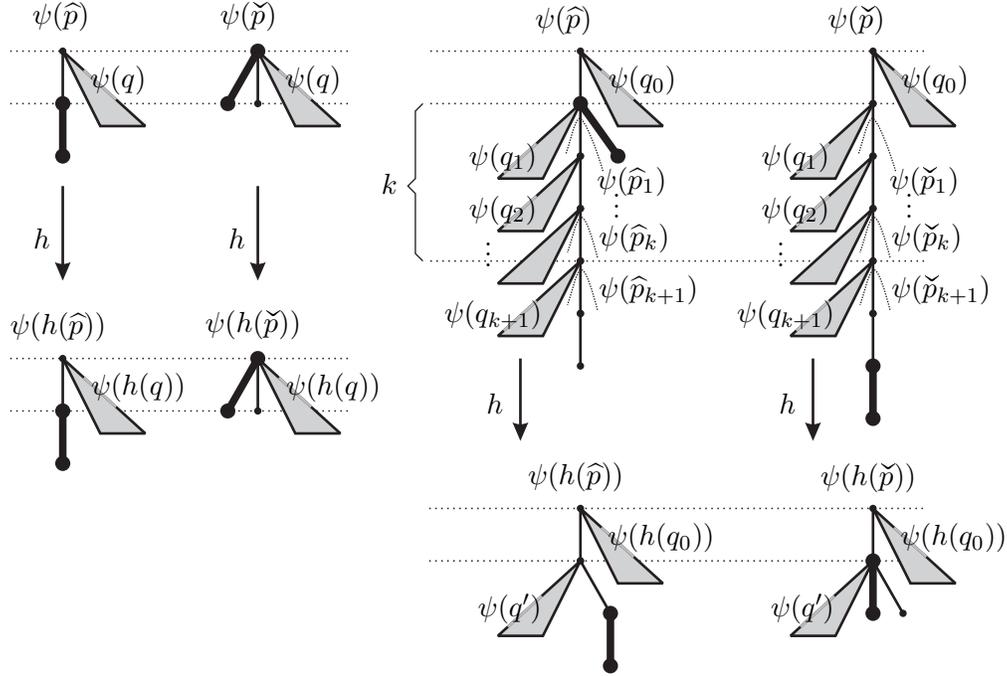}
}
\caption{Ordered rooted tree representation of the lattice paths in Lemma~\ref{lemma:hinv-is-tau1} (left) and Lemma~\ref{lemma:hinv-is-tau2} (right). The edges in which the trees differ are drawn bold, and the grey areas represent particular subtrees.}
\label{fig:hinv-tau}
\end{figure}

\begin{lemma}
\label{lemma:hinv-is-tau1}
Let $h$ be as defined in \eqref{eq:def-h}, and let $\phat,\pcheck\in D_{2n}^{=0}(n)$ be two lattice paths of the form $\phat=(\upstep,\upstep,\downstep,\downstep)\circ q$ and $\pcheck=(\upstep,\downstep,\upstep,\downstep)\circ q$ for some $q\in D_{2n-4}^{=0}(n-2)$.
Then we have
\begin{subequations}
\label{eq:hphat-hpcheck-hq}
\begin{align}
  h(\phat)   &= (\upstep,\upstep,\downstep,\downstep)\circ h(q) \enspace, \label{eq:hphat-hq} \\
  h(\pcheck) &= (\upstep,\downstep,\upstep,\downstep)\circ h(q) \enspace. \label{eq:hpcheck-hq}
\end{align}
\end{subequations}
\end{lemma}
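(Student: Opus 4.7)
The plan is to obtain both identities by straightforward unwinding of the recursive definition of $h$ in \eqref{eq:def-h}. First I would record two base computations. Applying \eqref{eq:def-h-indstep1} with $p' = ()$ and using $h(()) = ()$ from \eqref{eq:def-h-indbase} together with $\pi_1(()) = ()$ gives $h((\upstep,\downstep)) = (\upstep,\downstep)$. Applying \eqref{eq:def-h-indstep1} a second time with $p' = (\upstep,\downstep)$ gives $h((\upstep,\upstep,\downstep,\downstep)) = (\upstep) \circ \pi_1((\upstep,\downstep)) \circ (\downstep)$, which equals $(\upstep,\upstep,\downstep,\downstep)$ because $\pi_1$ acts trivially on any length-$2$ path: according to \eqref{eq:def-pi1} and the paragraph preceding \eqref{eq:f-alpha}, $\pi_1$ swaps the steps at positions $2i,2i+1$ for $i=1,\ldots,n-1$, which is the empty index range when $n=1$.

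For identity \eqref{eq:hphat-hq} I would exhibit the partition $\phat = (\upstep,\upstep,\downstep,\downstep) \circ q$, whose two factors both return to the $x$-axis and hence both lie in their respective Dyck sets; applying rule \eqref{eq:def-h-indstep2} together with the base computation then yields $h(\phat) = h((\upstep,\upstep,\downstep,\downstep)) \circ h(q) = (\upstep,\upstep,\downstep,\downstep) \circ h(q)$. In the degenerate case $q = ()$, $\phat$ is itself the four-step base path and the conclusion is immediate. For identity \eqref{eq:hpcheck-hq} I would use the two-stage partition $\pcheck = (\upstep,\downstep) \circ \bigl((\upstep,\downstep) \circ q\bigr)$, where the inner factor $(\upstep,\downstep) \circ q$ and the outer first factor $(\upstep,\downstep)$ are again Dyck paths. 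Applying \eqref{eq:def-h-indstep2} twice and substituting $h((\upstep,\downstep)) = (\upstep,\downstep)$ gives $h(\pcheck) = (\upstep,\downstep) \circ (\upstep,\downstep) \circ h(q)$, which is the claim.

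There is no real obstacle here; the only point requiring attention is the trivial action of $\pi_1$ on short paths in the base computation. Observe in particular that the partition rule \eqref{eq:def-h-indstep2} is used in a form that never forces $\pi_1$ to wrap $h(q)$: both $\phat$ and $\pcheck$ split into primitive pieces that are strictly separated from $q$ by a return to the $x$-axis, so the recursion never invokes \eqref{eq:def-h-indstep1} on a block that contains a step of $q$. This is precisely what makes the two identities come out so clean, and this observation is what we will exploit again in the proof of Lemma~\ref{lemma:hinv-is-tau2}, where the corresponding partitions are more intricate.
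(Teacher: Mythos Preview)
Your proof is correct and takes essentially the same approach as the paper: the paper's one-line proof just notes that the claim follows from the definition of $h$ together with the base computations $h(\upstep,\upstep,\downstep,\downstep)=(\upstep,\upstep,\downstep,\downstep)$ and $h(\upstep,\downstep,\upstep,\downstep)=(\upstep,\downstep,\upstep,\downstep)$, which is precisely what you unwind explicitly via the multiplicative rule \eqref{eq:def-h-indstep2}. Your version is simply more detailed.
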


\begin{proof}
This follows directly from the definition \eqref{eq:def-h}, using that $h(\upstep,\upstep,\downstep,\downstep)=(\upstep,\upstep,\downstep,\downstep)$ and $h(\upstep,\downstep,\upstep,\downstep)=(\upstep,\downstep,\upstep,\downstep)$.
\end{proof}

\begin{lemma}
\label{lemma:hinv-is-tau2}
Let $h$ be as defined in \eqref{eq:def-h}, and let $\phat,\pcheck\in D_{2n}^{=0}(n)$ be two lattice paths of the form
\begin{subequations}
\label{eq:phat-pcheck}
\begin{align}
  \phat &= (\upstep)\circ q_1\circ (\upstep)\circ q_2\circ (\upstep)\circ \cdots\circ q_{k+1}\circ (\upstep,\upstep,\downstep,\downstep)\circ(\downstep)^k \circ (\upstep,\downstep)\circ (\downstep)\circ q_0 \enspace, \\
  \pcheck &= (\upstep)\circ q_1\circ (\upstep)\circ q_2\circ (\upstep)\circ\cdots\circ q_{k+1}\circ (\upstep,\upstep,\upstep,\downstep,\downstep,\downstep) \;\circ\;\, (\downstep)^k \;\circ\; (\downstep)\circ q_0 \enspace,
\end{align}
\end{subequations}
where $k\geq 0$ and $q_i\in D_{2n_i}^{=0}(n_i)$ for all $i=0,\ldots,k+1$. Then we have
\begin{subequations}
\label{eq:hphat-hpcheck}
\begin{align}
  h(\phat)   &= (\upstep)\circ q'\circ (\upstep,\upstep,\downstep,\downstep)\circ (\downstep)\circ h(q_0) \enspace, \label{eq:hhphat} \\
  h(\pcheck) &= (\upstep)\circ q'\circ (\upstep,\downstep,\upstep,\downstep)\circ (\downstep)\circ h(q_0) \label{eq:hhpcheck}
\end{align}
\end{subequations}
for some $q'\in D_{2n'}^{=0}(n')$.
\end{lemma}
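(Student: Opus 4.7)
The plan is to unroll the recursive definition of $h$ in \eqref{eq:def-h} and prove the lemma by induction on the parameter $k$.

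I would first isolate the common tail $q_0$. Since both $\phat$ and $\pcheck$ are Dyck bumps whose only baseline-touches are at the endpoints and at the point where $q_0$ begins, writing $\phat = s_{\phat} \circ q_0$ and $\pcheck = s_{\pcheck} \circ q_0$ with irreducible bumps $s_{\phat}, s_{\pcheck}$, the concatenation rule \eqref{eq:def-h-indstep2} yields $h(\phat) = h(s_{\phat}) \circ h(q_0)$ and $h(\pcheck) = h(s_{\pcheck}) \circ h(q_0)$, reducing the claim to the analogous form for $h(s_{\phat})$ and $h(s_{\pcheck})$. Stripping the outermost $\upstep$ and $\downstep$ via \eqref{eq:def-h-indstep1} (writing $s_{\phat} = (\upstep) \circ p_{\phat} \circ (\downstep)$ and $s_{\pcheck} = (\upstep) \circ p_{\pcheck} \circ (\downstep)$ with Dyck paths $p_{\phat}, p_{\pcheck}$ of common length $2m$), we get $h(s_{\phat}) = (\upstep) \circ \pi_1(h(p_{\phat})) \circ (\downstep)$ and similarly for $s_{\pcheck}$. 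Unpacking how $\pi_1$ swaps the pairs $(2i, 2i+1)$ near the tail, the lemma becomes equivalent to the following \emph{intermediate claim}: $h(p_{\phat})$ and $h(p_{\pcheck})$ coincide at every position except $2m-2$ and $2m-1$, where $h(p_{\phat})$ reads $(\downstep,\upstep)$ and $h(p_{\pcheck})$ reads $(\upstep,\downstep)$, and moreover $h(p_{\phat})[2m-4] = \upstep$.

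I would prove the intermediate claim by induction on $k$. For the base case $k=0$, splitting $p_{\phat} = q_1 \circ (\upstep,\upstep,\downstep,\downstep) \circ (\upstep,\downstep)$ at its baseline crossings via \eqref{eq:def-h-indstep2} gives $h(p_{\phat}) = h(q_1) \circ (\upstep,\upstep,\downstep,\downstep,\upstep,\downstep)$, while the irreducible bump $(\upstep,\upstep,\upstep,\downstep,\downstep,\downstep)$ appearing in $p_{\pcheck}$ evaluates under \eqref{eq:def-h-indstep1} (using $\pi_1(\upstep,\upstep,\downstep,\downstep) = (\upstep,\downstep,\upstep,\downstep)$) to $(\upstep,\upstep,\downstep,\upstep,\downstep,\downstep)$, giving $h(p_{\pcheck}) = h(q_1) \circ (\upstep,\upstep,\downstep,\upstep,\downstep,\downstep)$; the claim follows by direct inspection of the tails. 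For the inductive step $k \geq 1$, we split $p_{\phat} = q_1 \circ B_{\phat} \circ (\upstep,\downstep)$ and $p_{\pcheck} = q_1 \circ B_{\pcheck}$, where $B_{\phat}, B_{\pcheck}$ are the irreducible middle bumps. In parallel with the main induction one proves a \emph{bump claim}: $h(B_{\phat}) = W \circ (\upstep,\downstep,\downstep)$ and $h(B_{\pcheck}) = W \circ (\upstep,\downstep,\upstep,\downstep,\downstep)$ for a common prefix $W$. Stripping the outer $\upstep, \downstep$ of $B_{\phat}$ (resp.\ $B_{\pcheck}$) and splitting off the next Dyck factor via \eqref{eq:def-h-indstep1} and \eqref{eq:def-h-indstep2} reduces the bump claim at level $k$ to the bump claim at level $k-1$; the base case is when the middle bump contains no $q_i$'s, where the evaluations $h(\upstep,\upstep,\downstep,\downstep) = (\upstep,\upstep,\downstep,\downstep)$ and $h(\upstep,\upstep,\upstep,\downstep,\downstep,\downstep) = (\upstep,\upstep,\downstep,\upstep,\downstep,\downstep)$ yield the claim directly. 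Composing the bump claim with $h(p_{\phat}) = h(q_1) \circ h(B_{\phat}) \circ (\upstep,\downstep)$ and $h(p_{\pcheck}) = h(q_1) \circ h(B_{\pcheck})$ produces the intermediate claim at level $k$.

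The principal technical obstacle is the non-local action of $\pi_1$, which swaps steps within each pair $(2i, 2i+1)$ and therefore crosses the boundaries between the recursively assembled pieces. The bump claim is formulated precisely so that its trailing shape is invariant under one recursive application of \eqref{eq:def-h-indstep1}: enclosing $\pi_1$ of a path ending in $(\upstep,\downstep,\downstep)$ between an outer $\upstep$ and $\downstep$ again produces a path ending in $(\upstep,\downstep,\downstep)$, while the analogous operation converts a tail $(\upstep,\downstep,\upstep,\downstep,\downstep)$ to itself, and the shared prefix propagates. The evenness of all participating path lengths and the involutivity $\pi_1 \bullet \pi_1 = \id$ are the combinatorial levers that make this bookkeeping go through.
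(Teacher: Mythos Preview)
Your proof is correct and follows essentially the same approach as the paper's. Your ``bump claim'' is exactly the paper's inductive assertion \eqref{eq:ps} (with your $W$ playing the role of $(\upstep)\circ q_{i+1}'$), and your final step of composing the bump claim with the outer $\pi_1$ corresponds precisely to the paper's derivation \eqref{eq:hphat}--\eqref{eq:hpcheck}; the only difference is that you phrase an explicit ``intermediate claim'' about the positions where $h(p_{\phat})$ and $h(p_{\pcheck})$ differ, whereas the paper keeps everything in terms of concatenated step sequences.
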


\begin{proof}
For the reader's convenience, the notations used in this proof are illustrated on the right hand side of Figure~\ref{fig:hinv-tau}.
We inductively define subpaths/subsequences $\phat_i$ and $\pcheck_i$, $i=k+1,k,\ldots,1$, of $\phat$ and $\pcheck$ by
\begin{subequations}
\begin{align}
  \phat_{k+1}   &:= (\upstep,\upstep,\downstep,\downstep) \enspace, \label{eq:phat-kp1} \\
  \pcheck_{k+1} &:= (\upstep,\upstep,\upstep,\downstep,\downstep,\downstep) \enspace, \label{eq:pcheckkp1} \\
  \phat_i       &:= (\upstep)\circ q_{i+1}\circ \phat_{i+1}\circ (\downstep) \enspace, \quad i=k,k-1,\ldots,1 \enspace, \label{eq:phati} \\
  \pcheck_i     &:= (\upstep)\circ q_{i+1}\circ \pcheck_{i+1}\circ (\downstep) \enspace, \quad i=k,k-1,\ldots,1 \enspace. \label{eq:pchecki}
\end{align}
\end{subequations}
Note that each $\phat_i$, $i=k+1,k,\ldots,1$, is contained in $D_{2\nhat_i}^{=0}(\nhat_i)$ for some $\nhat_i$ and that $\pcheck_i\in D_{2\nhat_i+2}^{=0}(\nhat_i+1)$.
With these definitions the relations \eqref{eq:phat-pcheck} can be written as
\begin{subequations}
\label{eq:phat-pcheck-rewr}
\begin{align}
  \phat   &= (\upstep)\circ q_1\circ \phat_1\circ (\upstep,\downstep) \circ (\downstep) \circ q_0 \enspace, \label{eq:phat-rewr} \\
  \pcheck &= (\upstep)\circ q_1\circ \pcheck_1\circ (\downstep)\circ q_0 \enspace. \label{eq:pcheck-rewr}
\end{align}
\end{subequations}
We proceed by successively computing $h(\phat)$ and $h(\pcheck)$.
Using the definition \eqref{eq:def-h}, we obtain from \eqref{eq:phat-kp1} and \eqref{eq:pcheckkp1} that
\begin{subequations}
\label{eq:hphat-ind-base}
\begin{align}
  h(\phat_{k+1})   &= (\upstep,\upstep,\downstep,\downstep) \enspace, \label{eq:hphatkp1} \\
  h(\pcheck_{k+1}) &= (\upstep,\upstep,\downstep,\upstep,\downstep,\downstep) \enspace. \label{eq:hpcheckkp1}
\end{align}
\end{subequations}
We now prove inductively that for $i=k+1,k,\ldots,1$ the lattice paths $h(\phat_i)$ and $h(\pcheck_i)$ have the form
\begin{subequations}
\label{eq:ps}
\begin{align}
  h(\phat_i)   &= (\upstep)\circ q_{i+1}'\circ (\upstep,\downstep,\downstep) \enspace, \label{eq:hphati} \\
  h(\pcheck_i) &= (\upstep)\circ q_{i+1}'\circ (\upstep,\downstep,\upstep,\downstep,\downstep) \label{eq:hpchecki}
\end{align}
\end{subequations}
for some $q_{i+1}'\in D_{2n_{i+1}'}^{=0}(n_{i+1}')$. The induction basis $i=k+1$ is given by \eqref{eq:hphat-ind-base}.
For the induction step $i+1\rightarrow i$ we compute
\begin{subequations}
\begin{align}
  h(\phat_i) &\eqByM{\eqref{eq:def-h}, \eqref{eq:phati}}
              (\upstep)\circ \pi_1\big(h(q_{i+1})\circ h(\phat_{i+1})\big)\circ (\downstep) \notag \\
             &\eqBy{eq:hphati}
              (\upstep)\circ \pi_1\big(h(q_{i+1})\circ (\upstep)\circ q_{i+2}'\circ (\upstep,\downstep,\downstep)\big)\circ (\downstep) \label{eq:hphat-ind-step}
\end{align}
and
\begin{align}
  h(\pcheck_i) &\eqByM{\eqref{eq:def-h}, \eqref{eq:pchecki}}
                (\upstep)\circ \pi_1\big(h(q_{i+1})\circ h(\pcheck_{i+1})\big)\circ (\downstep) \notag \\
               &\eqBy{eq:hpchecki}
                (\upstep)\circ \pi_1\big(h(q_{i+1})\circ (\upstep)\circ q_{i+2}'\circ (\upstep,\downstep,\upstep,\downstep,\downstep)\big)\circ (\downstep) \enspace, \label{eq:hpcheck-ind-step}
\end{align}
\end{subequations}
where we used the induction hypothesis in the last step of the two preceding equations. Using the definition of $\pi_1$ from \eqref{eq:def-pi1} we can write \eqref{eq:hphat-ind-step} and \eqref{eq:hpcheck-ind-step} as $(\upstep)\circ q_{i+1}'\circ (\upstep,\downstep,\downstep)$ and $(\upstep)\circ q_{i+1}'\circ (\upstep,\downstep,\upstep,\downstep,\downstep)$, respectively, for some $q_{i+1}'\in D_{2n_{i+1}'}^{=0}(n_{i+1}')$, proving \eqref{eq:ps}.

Combining these observations we obtain
\begin{subequations}
\begin{align}
  h(\phat)
  &\eqByM{\eqref{eq:def-h}, \eqref{eq:phat-rewr}}
  (\upstep)\circ \pi_1\big(h(q_1)\circ h(\phat_1)\circ (\upstep,\downstep)\big)\circ (\downstep)\circ h(q_0) \notag \\
  &\eqBy{eq:hphati}
  (\upstep)\circ \pi_1\big(h(q_1)\circ (\upstep)\circ q_2'\circ (\upstep,\downstep,\downstep,\upstep,\downstep)\big)\circ (\downstep)\circ h(q_0) \label{eq:hphat}
\end{align}
and
\begin{align}
  h(\pcheck)
  &\eqByM{\eqref{eq:def-h}, \eqref{eq:pcheck-rewr}}
  (\upstep)\circ \pi_1\big(h(q_1)\circ h(\pcheck_1)\big)\circ (\downstep)\circ h(q_0) \notag \\
  &\eqBy{eq:hpchecki}
  (\upstep)\circ \pi_1\big(h(q_1)\circ (\upstep)\circ q_2'\circ (\upstep,\downstep,\upstep,\downstep,\downstep)\big)\circ (\downstep)\circ h(q_0) \enspace. \label{eq:hpcheck}
\end{align}
\end{subequations}
Comparing \eqref{eq:hphat} and \eqref{eq:hpcheck} and using the definition of $\pi_1$ from \eqref{eq:def-pi1}, we conclude that $h(\phat)$ and $h(\pcheck)$ can indeed be written in the form \eqref{eq:hphat-hpcheck} for some $q'\in D_{2n'}^{=0}(n')$. This completes the proof.
\end{proof}

\section{Structure of the flippable pairs \texorpdfstring{$\cX_{2n}^0(n,n+1)$}{X2n0} and \texorpdfstring{$\cX_{2n}^1(n,n+1)$}{X2n1}}
\label{sec:structure-flipp}

This section constitutes the second building block of our analysis of the graph $\cG(\cC_{2n+1}^1,\cX_{2n}^1(n,n+1))$ that is required for proving Proposition~\ref{prop:main1} and \ref{prop:main2}. Specifically, we analyze in detail the sets of flippable pairs $\cX_{2n}^0(n,n+1)$ and $\cX_{2n}^1(n,n+1)$ defined in \eqref{eq:flipp-0} and \eqref{eq:flipp-1}, and show that each flippable pair corresponds to a pair of ordered rooted trees that differ in exactly one edge (it turns out that the resulting pairs of ordered rooted trees are the same for $\cX_{2n}^0(n,n+1)$ and $\cX_{2n}^1(n,n+1)$). These pairs of trees are shown at the bottom left and bottom right of Figure~\ref{fig:hinv-tau}. It follows that each edge of $\cG(\cC_{2n+1}^1,\cX_{2n}^1(n,n+1))$ can be interpreted as an elementary transformation between a pair of ordered rooted trees, or between the corresponding plane trees, where an elementary transformation consists of removing a leaf of the tree and attaching it to a different vertex.
This correspondence between flippable pairs from $\cX_{2n}^1(n,n+1)$ and pairs of ordered rooted trees is stated in Lemma~\ref{lemma:flipp-pairs-special} below.
As in Section~\ref{sec:edge-shift}, for technical reasons all results and proofs presented in this section are formulated in terms of lattice paths rather than ordered rooted trees (in the comments we will emphasize the connection to ordered rooted trees provided by the bijection $\psi$).

\subsection{Structure of the flippable pairs \texorpdfstring{$\cX_{2n}^0(n,n+1)$}{X2n0} and \texorpdfstring{$\cX_{2n}^1(n,n+1)$}{X2n1}}

To keep track which pairs of paths from $\cP_{2n}(k,k+1)$ are contained in a flippable pair in $\cX_{2n}(k,k+1)$ through our inductive construction, it suffices to consider the corresponding pairs of first vertices $F(\cX_{2n}(k,k+1))$ (the actual paths are already tracked via the sets $\cP_{2n}(k,k+1)$, so no information is lost by restricting our attention to only one vertex on each path in a flippable pair).
Note that by Lemma~\ref{lemma:FSL-D-isomorphic} we have $\varphi(F(\cX_{2n}(k,k+1)))\seq D_{2n}^{=0}(k)\times D_{2n}^{=0}(k)$.
The next lemma, Lemma~\ref{lemma:flipp-pairs}, provides a closed formula for the set $F(\cX_{2n}(k,k+1))$ when the all-zero or the all-one parameter sequence is used in our construction (in both cases, the resulting pairs of first vertices are the same). In particular, Lemma~\ref{lemma:flipp-pairs} describes all flippable pairs from $\cX_{2n}^1(n,n+1)$, i.e., \emph{all} edges of the graph $\cG(\cC_{2n+1}^1,\cX_{2n}^1(n,n+1))$. For proving that this graph is connected and has many spanning trees, a \emph{subset} of those edges is sufficient. This subset of flippable pairs needed for our later arguments is made explicit in Lemma~\ref{lemma:flipp-pairs-special} below, which is an immediate corollary of Lemma~\ref{lemma:flipp-pairs}.

To state Lemma~\ref{lemma:flipp-pairs} we need to introduce several definitions. These definitions are illustrated in Figure~\ref{fig:add12}.

\textit{The mappings $\lr()$, $\add_1()$, $\add_2()$.}
For any lattice path $p\in D_{2n}^{=0}(n)$ that starts at $(x,y)$ and ends at $(x+2n,y)$, we recursively define abscissas  $\lr(p),\rl(p)\in \{x,x+1,\ldots,x+2n\}$ as follows: If $n=0$, i.e., $p=()$, then we define $\lr(p)=\rl(p):=x$. If $n\geq 1$, then we consider the partition $p=p_1\circ\cdots\circ p_k$ of $p$ into the maximum number of lattice paths $p_i\in D_{2n_i}^{=0}(n_i)$ with $n_i\geq 1$, $i=1,\ldots,k$, and define $\lr(p):=\rl(p[x+1,x+2n_1-1])$ and $\rl(p):=\lr(p[x+2n-2n_k+1,x+2n-1])$. In the recursion step $\lr(p)$ descends into the subpath $p_1$ and $\rl(p)$ descends into the subpath $p_k$, but the recursion tracks the absolute coordinates of these subpaths.
We extend the mapping $\lr()$ to lattice paths $p\in D_{2n}^{=0}(k)$, $k\geq n$, as follows: Let $x$ denote the smallest strictly positive abscissa where $p$ touches the $y$-axis, then we define $\lr(p):=\lr(p[0,x])$.

Via the bijection $\psi$ between lattice paths and ordered rooted trees, the abscissas $\lr(p)$ and $\rl(p)$ for some $p\in D_{2n}^{=0}(n)$ can be interpreted as two particular leaves $\lr(T)$ and $\rl(T)$ of the tree $T=\psi(p)\in\cT_n^*$ (each lattice point on $p$ corresponds to a vertex of $\psi(p)$). The leaves $\lr(T)$ and $\rl(T)$ can be found by traversing $T$ starting at the root, and by descending, alternatingly in each level, towards the leftmost or the rightmost child (see Figure~\ref{fig:add12}). For the leaf $\lr(T)$ we start with following the leftmost child of the root, and for the leaf $\rl(T)$ we start with following the rightmost child of the root.
In the following we will not need the mapping $\rl()$ anymore, but only the mapping $\lr()$ (the former was only needed to define the latter).

Note that for $n\geq 1$ and any $p\in D_{2n}^{=0}(k)$, for $x:=\lr(p)$ we have
\begin{equation}
\label{eq:pxxp1}
  (p_x,p_{x+1})=(\upstep,\downstep) \enspace.
\end{equation}
For any $n\geq 2$ and any lattice path $p\in D_{2n-2}^{=0}(k)$, $k\geq n-1$, we define lattice paths $\add_1(p),\add_2(p)\in D_{2n}^{=0}(k+1)$ as follows: We set $x:=\lr(p)$ and define
\begin{subequations}
\label{eq:def-add12}
\begin{align}
  \add_1(p) &:= (p_1,...\,,p_x,\upstep,\downstep,p_{x+1},...\,,p_{2n-2})
            \eqBy{eq:pxxp1} (p_1,...\,,p_{x-1},\upstep,\upstep,\downstep,\downstep,p_{x+2},...\,,p_{2n-2}) \enspace, \\
  \add_2(p) &:= (p_1,...\,,\upstep,\downstep,p_x,p_{x+1},...\,,p_{2n-2})
            \eqBy{eq:pxxp1} (p_1,...\,,p_{x-1},\upstep,\downstep,\upstep,\downstep,p_{x+2},...\,,p_{2n-2}) \enspace.
\end{align}
\end{subequations}

In terms of ordered rooted trees, $\psi(\add_1(p))$ is obtained from the tree $\psi(p)$ by attaching an additional edge to the leaf $\lr(\psi(p))$. Similarly, $\psi(\add_2(p))$ is obtained from the tree $\psi(p)$ by attaching an additional edge to the parent $v$ of $\lr(\psi(p))$ to the left of the edge from $v$ to $\lr(\psi(p))$ (see Figure~\ref{fig:add12}).

For any lattice path $p\in D_{2n}^{=0}(k)$ of the form $p=p'\circ q$ with $p'\in D_{2n'}^{=0}(n')$, $n'\geq 1$, we clearly have $\lr(p)=\lr(p')$ and therefore 
\begin{equation}
\label{eq:add12-pq}
  \add_i(p)=\add_i(p')\circ q \enspace,\qquad i\in\{1,2\} \enspace.
\end{equation}
Furthermore, for any lattice path $p\in D_{2n}^{=0}(n)$ of the form $p=(\upstep)\circ p'\circ (\downstep)$ with $p'\in D_{2n-2}^{=0}(n-1)$ we have $\lr(p)=\rl(p[1,2n-1])=\rl(p')=2n-\lr(\ol{\rev}(p'))$ and therefore
\begin{equation}
\label{eq:add12-p-rev}
  \add_i(p)=(\upstep)\circ \ol{\rev}(\add_i(\ol{\rev}(p')))\circ(\downstep) \enspace,\qquad i\in\{1,2\} \enspace.
\end{equation}

\begin{figure}
\centering
\PSforPDF{
 \psfrag{p}{\footnotesize $p\in D_{18}^{=0}(9)$}
 \psfrag{t}{\footnotesize $T=\psi(p)\in\cT_9^*$}
 \psfrag{add1}{\footnotesize $\add_1(p)$}
 \psfrag{add2}{\footnotesize $\add_2(p)$}
 \psfrag{add1t}{\footnotesize $\psi(\add_1(p))\in\cT_{10}^*$}
 \psfrag{add2t}{\footnotesize $\psi(\add_2(p))\in\cT_{10}^*$}
 \psfrag{lrt}{\footnotesize $\lr(T)$}
 \psfrag{z}{\footnotesize 0}
 \psfrag{eig}{\footnotesize 18}
 \psfrag{lr0}{\footnotesize $\lr(p[0,18])$}
 \psfrag{lr1}{\footnotesize $\rl(p[1,11])$}
 \psfrag{lr2}{\footnotesize $\lr(p[4,10])$}
 \psfrag{lr3}{\footnotesize $\rl(p[5,9])$}
 \psfrag{lr4}{\footnotesize $\lr(p[8,8])=8$}
 \includegraphics{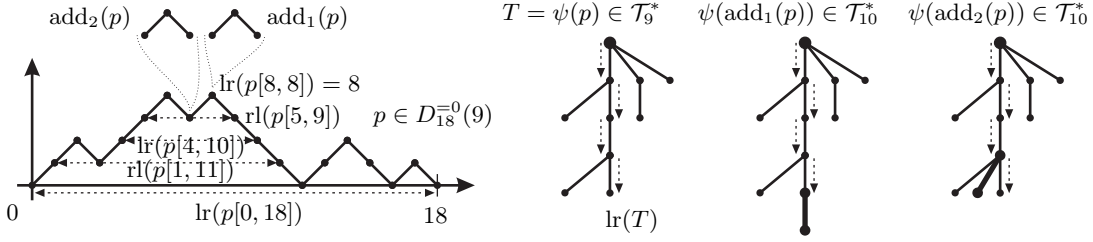}
}
\caption{Illustration of the definitions of the mappings $\lr()$, $\rl()$, $\add_1()$ and $\add_2()$. The edges added to the tree $T=\psi(p)$ by $\add_1()$ and $\add_2()$ are drawn bold.}
\label{fig:add12}
\end{figure}

\begin{lemma}
\label{lemma:flipp-pairs}
Let $n\geq 2$ and let $\cX_{2n}^0(k,k+1)$ and $\cX_{2n}^1(k,k+1)$ be the sets of flippable pairs defined in \eqref{eq:flipp-0} and \eqref{eq:flipp-1}, respectively.
For all $k=n,n+1,\ldots,2n-1$ we have
\begin{equation}
\label{eq:flipp-pairs-D}
  \varphi(F(\cX_{2n}^0(k,k+1)))=\{(\add_1(p),\add_2(p))\mid p\in D_{2n-2}^{=0}(k-1)\} \enspace,
\end{equation}
where $\add_1$ and $\add_2$ are defined in \eqref{eq:def-add12}.
Moreover, we have $F(\cX_{2n}^0(k,k+1))=F(\cX_{2n}^1(k,k+1))$.
\end{lemma}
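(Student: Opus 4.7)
I would prove the lemma by induction on $n$, with the base case $n=2$ verified directly from \eqref{eq:ind-base-flipp}: since $\alpha_2\in\{0,1\}^0$ is trivial we have $\cX_4^0=\cX_4^1$, and one checks that $\varphi((1,1,0,0),(1,0,1,0))=((\upstep,\upstep,\downstep,\downstep),(\upstep,\downstep,\upstep,\downstep))=(\add_1((\upstep,\downstep)),\add_2((\upstep,\downstep)))$ with $D_2^{=0}(1)=\{(\upstep,\downstep)\}$, while $\cX_4(3,4)=\emptyset$ matches $D_2^{=0}(2)=\emptyset$.

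For the inductive step from $n$ to $n+1$ I will split the argument into Case~1 ($k\in\{n+2,\ldots,2n+1\}$) and Case~2 ($k=n+1$). In Case~1, the formula \eqref{eq:ind-step1-flipp} exhibits $\cX_{2n+2}(k,k+1)$ as a disjoint union of four shifted copies of flippable-pair sets in $Q_{2n}$. Applying the induction hypothesis to each term and invoking \eqref{eq:add12-pq} to push the attached two-bit suffix inside the $\add_i$ operators produces four sets of pairs $(\add_1(p'\circ\sigma),\add_2(p'\circ\sigma))$; the partition identity \eqref{eq:D2np2-*-u-partition} (applied at the $2n$-level with superscript $=0$) then collapses these into $\{(\add_1(p),\add_2(p))\mid p\in D_{2n}^{=0}(k-1)\}$. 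The edge case $k=2n+1$ is trivial, since both sides are empty by unwinding the recursion. Because \eqref{eq:ind-step1-flipp} does not involve $\alpha_{2n}$, the equality $F(\cX_{2n+2}^0)=F(\cX_{2n+2}^1)$ for these $k$ is immediate from the inductive hypothesis.

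Case~2 is the crux. The formula \eqref{eq:ind-step2-flipp} contributes three terms: $\cX_{2n}(n+1,n+2)\circ(0,0)$ and $\cX_{2n}(n,n+1)\circ(1,0)$ are dispatched as in Case~1, yielding pairs $(\add_1(q'),\add_2(q'))$ with $q'\in D_{2n-2}^{=0}(n)\circ(\downstep,\downstep)\cup D_{2n-2}^{=0}(n-1)\circ(\upstep,\downstep)$. For the third term $\cX_{2n+2}^*$, I will use \eqref{eq:new-paths-F} combined with the shared formula $p_S=(\upstep)\circ\ell(p_F)\circ(\upstep)\circ r(p_F)$ from Lemmas~\ref{lemma:FL-matching-0} and \ref{lemma:FL-matching-1} to obtain the parameter-independent identity $\varphi(F(P^+))=(\upstep)\circ g_0(p_F)\circ(\downstep)$, where $p_F=\varphi(F(P))$ and $g_0$ is the rotation from \eqref{eq:def-g0}. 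Lemma~\ref{lemma:rotation-0} then gives $g_0(p_F)=f_{\alpha_{2n}}(\phat_F)=\ol{\rev}(\phat_F)$ directly in the all-zero case; in the all-one case a similar but more intricate argument via the conjugation $g_0=h^{-1}\bullet g_1\bullet h$ from Lemma~\ref{lemma:g0-via-g1} yields the analogous expression. Combining this with \eqref{eq:add12-p-rev} applied to the inductive form $(\phat_F,\phat_F')=(\add_1(q),\add_2(q))$ identifies $(\varphi(F(P^+)),\varphi(F(P^{+\prime})))=(\add_1(q'),\add_2(q'))$ for some $q'\in(\upstep)\circ D_{2n-2}^{=0}(n-1)\circ(\downstep)$; the bijectivity of $\ol{\rev}$ and of $\pi_1$ on $D_{2n-2}^{=0}(n-1)$ (the latter by Lemma~\ref{lemma:dyck-paths-invariant}) ensures that $q'$ ranges over the full set $(\upstep)\circ D_{2n-2}^{=0}(n-1)\circ(\downstep)$ under either parameter choice. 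The three contributions then cover all of $D_{2n}^{=0}(n)$ via the shifted version of \eqref{eq:D2np2-eq0-m-partition} together with the direct bijective identity $D_{2n-2}^{>0}(n)\circ(\downstep,\downstep)=(\upstep)\circ D_{2n-2}^{=0}(n-1)\circ(\downstep)$, completing the induction.

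The main obstacle will be the $\cX_{2n+2}^*$ portion of Case~2 for the all-one parameter choice: unlike the all-zero case, the identity $g_0(p_F)=f_{\alpha_{2n}}(\phat_F)$ does not hold directly, so one must invoke Lemma~\ref{lemma:g0-via-g1} to express $g_0(p_F)$ in a form that makes \eqref{eq:add12-p-rev} applicable, and then verify that the resulting range of $q'$ matches that from the all-zero case. This verification is the key point underlying the equality $F(\cX_{2n+2}^0(n+1,n+2))=F(\cX_{2n+2}^1(n+1,n+2))$ despite the distinct internal computations.
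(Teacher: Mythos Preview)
Your overall inductive structure, the base case, Case~1, and the first two contributions in Case~2 match the paper's proof. The formula $\varphi(F(P^+))=(\upstep)\circ g_0(p_F)\circ(\downstep)$ is also correct and parameter-independent, as you note.

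The gap is in your treatment of the $\cX_{2n+2}^*$ term for the all-one parameter sequence. The conjugation $g_0=h^{-1}\bullet g_1\bullet h$ from Lemma~\ref{lemma:g0-via-g1} is not the right tool here: $g_1$ describes the step map along cycles of the 2-factor $\cC_{2n+1}^1$, which by definition~\eqref{eq:2-factor-1} uses $\alpha_{2n}=(0,\ldots,0)$ in the \emph{last} step, whereas the construction of $\cX_{2n+2}^1$ via~\eqref{eq:new-flipp-paths} involves the 2-factor with $\alpha_{2n}=(1,\ldots,1)$. Neither Lemma~\ref{lemma:rotation-0} nor Lemma~\ref{lemma:rotation-1} applies to that 2-factor, and the map $h$ plays no role in relating $p_F$ to $\phat_F$ here.

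What the paper does instead is repeat the direct chain of substitutions leading to~\eqref{eq:tpF}, but with $f_{\alpha_{2n}}=\ol{\rev}\bullet\pi_1$ and Lemma~\ref{lemma:FL-matching-1} in place of Lemma~\ref{lemma:FL-matching-0}. The extra $\pi_1$ factors that appear (from $f_{\alpha_{2n}}$ and from~\eqref{eq:FL-relation-1}) cancel pairwise via $\pi_1\bullet\ol{\rev}=\ol{\rev}\bullet\pi_1$ and $\pi_1\bullet\pi_1=\id$, and one obtains the \emph{identical} relation $p_F^+=(\upstep)\circ\ol{\rev}(\phat_F)\circ(\downstep)$. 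This immediately gives $F(\cX_{2n+2}^0(n+1,n+2))=F(\cX_{2n+2}^1(n+1,n+2))$ with no separate range-matching verification needed; your proposed check that ``$q'$ ranges over the full set'' under both parameter choices becomes unnecessary once you see that the formula for $F(P^+)$ in terms of $F(\Phat)$ is literally the same.
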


Informally speaking, Lemma~\ref{lemma:flipp-pairs} states that all flippable pairs from $\cX_{2n}^0(k,k+1)$ or $\cX_{2n}^1(k,k+1)$ --- more precisely, the lattice paths corresponding to the first vertices of these paths --- are obtained by considering all lattice paths $p\in D_{2n-2}^{=0}(k-1)$ and by extending each of them by two steps, specifically, by applying $\add_1()$ and $\add_2()$ to $p$.

\begin{proof}
For the proof we analyze the inductive construction of flippable pairs described in Section~\ref{sec:construction-flipp}. We first consider the parameter sequence $\alpha_{2i}=(0,0,\ldots,0)\in\{0,1\}^{i-1}$ for all $i=1,\ldots,n-1$, i.e., the sets of flippable pairs $\cX_{2n}^0(k,k+1)$, $k=n,n+1,\ldots,2n-1$.

We first consider a fixed path $P^+\in\cP_{2n+2}^*$ as in the definition of the set $\cX_{2n+2}^*$ in \eqref{eq:new-flipp-paths}. By this definition, there is path $P\in\cP_{2n}(n,n+1)$ and a path $\Phat$ in one of the pairs from $\cX_{2n}^0(n,n+1)$ with
\begin{equation}
\label{eq:fLPhat-LP}
  f_{\alpha_{2n}}(L(\Phat))=L(P)
\end{equation}
such that $P^+$ contains all except the first edge of $P\circ(0,1)$ and all edges from $f_{\alpha_{2n}}(\Phat)\circ(1,1)$.
We define the lattice paths $p_F:=\varphi(F(P))\in D_{2n}^{=0}(n)$, $p_L:=\varphi(L(P))\in D_{2n}^-(n)$, $\phat_F:=\varphi(F(\Phat))\in D_{2n}^{=0}(n)$, $\phat_L:=\varphi(L(\Phat))\in D_{2n}^-(n)$, and $p_F^+:=\varphi(F(P^+))\in D_{2n+2}^{=0}(n+1)$.
By the definition in \eqref{eq:f-alpha}, for $\alpha_{2n}=(0,0,\ldots,0)\in\{0,1\}^{n-1}$ we have $f_{\alpha_{2n}}=\ol{\rev}$, so the relation \eqref{eq:fLPhat-LP} shows that the corresponding lattice paths satisfy
\begin{equation}
\label{eq:phatL-pL}
  \big(\ol{\rev}(r(\phat_L)),\ol{\rev}(\ell(\phat_L))\big) = (\ell(p_L),r(p_L)) \enspace.
\end{equation}
Furthermore, by Lemma~\ref{lemma:FL-matching-0} we know that
\begin{subequations}
\label{eq:pS-pL-phatF-phatL}
\begin{align}
  (\ell(p_S),r(p_S)) &= (\ell(p_L),r(p_L)) \enspace, \label{eq:pS-pL} \\
  (\ell(\phat_F),r(\phat_F)) &= (\ell(\phat_L),r(\phat_L)) \enspace. \label{eq:phatF-phatL}
\end{align}
\end{subequations}
By the definition in \eqref{eq:new-paths} we have $F(P^+)=S(P)\circ(0,0)$, implying that
\begin{equation}
\label{eq:pF-pS}
  p_F^+ = p_S\circ(\downstep,\downstep) \enspace.
\end{equation}
Combining our previous observations we obtain
\begin{align}
  p_F^+ &\eqByM{\eqref{eq:pF-pS},\eqref{eq:ell-r-S-partition}}
        (\upstep)\circ\ell(p_S)\circ(\upstep)\circ r(p_S)\circ (\downstep,\downstep) \notag \\
        &\eqByM{\eqref{eq:pS-pL},\eqref{eq:phatL-pL},\eqref{eq:phatF-phatL}}
        (\upstep)\circ \ol{\rev}(r(\phat_F))\circ(\upstep)\circ \ol{\rev}(\ell(\phat_F))\circ (\downstep,\downstep) \notag \\
        &\eqBy{eq:ell-r-F-partition}
        (\upstep)\circ \ol{\rev}(\phat_F)\circ (\downstep) \enspace. \label{eq:tpF}
\end{align}
Rephrasing \eqref{eq:tpF} in terms of bitstrings, we obtain that for any path $\Phat$ in a flippable pair in $\cX_{2n}^0(n,n+1)$ and the corresponding path $P^+$ in a flippable pair in $\cX_{2n+2}^*$ we have
\begin{equation}
\label{eq:FtP}
  F(P^+)=(1)\circ \ol{\rev}(F(\Phat))\circ (0) \enspace.
\end{equation}

To prove \eqref{eq:flipp-pairs-D} we argue by induction over $n$.
In this argument we will use that the sets of lattice paths introduced in Section~\ref{sec:lattice-paths} satisfy
\begin{equation*}
  D_{2n}^{>0}(n+1)\circ(\downstep,\downstep)
  =(\upstep)\circ D_{2n}^{=0}(n)\circ (\downstep)
\end{equation*}
for $n\geq 1$, which allows us to rewrite \eqref{eq:D2np2-eq0-m-partition} as
\begin{equation}
\label{eq:D2np2-eq0-m-partition-rewr}
  D_{2n+2}^{=0}(n+1) = D_{2n}^{=0}(n+1) \circ (\downstep,\downstep) \cup
                       D_{2n}^{=0}(n)\circ (\upstep,\downstep) \cup
                       (\upstep)\circ D_{2n}^{=0}(n)\circ (\downstep) \enspace.
\end{equation}
The validity of \eqref{eq:flipp-pairs-D} for the induction basis $n=2$ follows directly from \eqref{eq:ind-base-flipp}, the relations $D_2^{=0}(1)=\{(\upstep,\downstep)\}$, $D_2^{=0}(2)=\emptyset$ and the fact that $\add_1(\upstep,\downstep)=(\upstep,\upstep,\downstep,\downstep)$ and $\add_2(\upstep,\downstep)=(\upstep,\downstep,\upstep,\downstep)$.

For the induction step let $n\geq 2$. We prove \eqref{eq:flipp-pairs-D} for $n+1$ assuming that this relation holds for $n$. We distinguish the cases $k=n+2,\ldots,2n+1$ and $k=n+1$.

For $k=n+2,\ldots,2n+1$ we have
\begin{equation*}
\begin{split}
  \varphi(F(\cX_{2n+2}^0(k,k+1))) &\eqBy{eq:ind-step1-flipp}
    \varphi\big(F(\cX_{2n}^0(k,k+1))\circ(0,0)\big) \cup
    \varphi\big(F(\cX_{2n}^0(k-1,k))\circ(1,0)\big) \\
    &\qquad \cup
    \varphi\big(F(\cX_{2n}^0(k-1,k))\circ(0,1)\big) \cup
    \varphi\big(F(\cX_{2n}^0(k-2,k-1))\circ(1,1)\big) \\
    &\eqBy{eq:flipp-pairs-D}
                 \{(\add_1(p),\add_2(p))\mid p\in D_{2n-2}^{=0}(k-1)\}\circ(\downstep,\downstep) \\
    &\qquad \cup \{(\add_1(p),\add_2(p))\mid p\in D_{2n-2}^{=0}(k-2)\}\circ(\upstep,\downstep) \\
    &\qquad \cup \{(\add_1(p),\add_2(p))\mid p\in D_{2n-2}^{=0}(k-2)\}\circ(\downstep,\upstep) \\
    &\qquad \cup \{(\add_1(p),\add_2(p))\mid p\in D_{2n-2}^{=0}(k-3)\}\circ(\upstep,\upstep) \\
    &\eqByM{\eqref{eq:D2np2-*-u-partition},\eqref{eq:add12-pq}}
    \{(\add_1(p),\add_2(p))\mid p\in D_{2n}^{=0}(k-1)\} \enspace,
\end{split}
\end{equation*}
where we used the induction hypothesis in the second step.

For the case $k=n+1$ we obtain
\begin{equation*}
\begin{split}
  \varphi(F(\cX_{2n+2}^0(n+1,n+2))) 
    &\eqByM{\eqref{eq:ind-step2-flipp},\eqref{eq:FtP}}
                 \varphi\big(F(\cX_{2n}^0(n+1,n+2))\circ(0,0)\big) \\
    &\qquad \cup \varphi\big(F(\cX_{2n}^0(n,n+1))\circ(1,0)\big) \\
    &\qquad \cup \varphi\big((1)\circ \ol{\rev}(F(\cX_{2n}^0(n,n+1)))\circ(0)\big) \\
    &\eqBy{eq:flipp-pairs-D}
                 \{(\add_1(p),\add_2(p))\mid p\in D_{2n-2}^{=0}(n)\}\circ(\downstep,\downstep) \\
    &\qquad \cup \{(\add_1(p),\add_2(p))\mid p\in D_{2n-2}^{=0}(n-1)\}\circ(\upstep,\downstep) \\
    &\qquad \cup (\upstep)\circ \ol{\rev}\big(\{(\add_1(p),\add_2(p))\mid p\in D_{2n-2}^{=0}(n-1)\}\big)\circ(\downstep) \\
    &\eqByM{\eqref{eq:add12-pq},\eqref{eq:add12-p-rev},\eqref{eq:D2np2-eq0-m-partition-rewr}}
    \{(\add_1(p),\add_2(p))\mid p\in D_{2n}^{=0}(n)\} \enspace,
\end{split}
\end{equation*}
where we used the induction hypothesis in the second step, and in the last step also the relation $D_{2n-2}^{=0}(n-1)=\ol{\rev}(D_{2n-2}^{=0}(n-1))$. This completes the proof of \eqref{eq:flipp-pairs-D}.

To complete the proof of the lemma it remains to show that $F(\cX_{2n}^0(k,k+1))=F(\cX_{2n}^1(k,k+1))$.
To see this note that the first part of the above proof can be adapted for the parameter sequence $\alpha_{2i}=(1,1,\ldots,1)\in\{0,1\}^{i-1}$ for all $i=1,\ldots,n-1$ and the resulting set of flippable pairs $\cX_{2n}^1(k,k+1)$ simply by substituting \eqref{eq:phatL-pL} and \eqref{eq:pS-pL-phatF-phatL} by the following primed versions of these equations:
\begin{equation}
\labelp{eqp:phatL-pL}{eq:phatL-pL}
  \big(\ol{\rev}(\pi_1(r(\phat_L))),\ol{\rev}(\pi_1(\ell(\phat_L)))\big) = (\ell(p_L),r(p_L)) \enspace,
\end{equation}
\vspace{-5mm} 
\begin{subequations}
\begin{align}
  (\ell(p_S),r(p_S)) &= (\pi_1(\ell(p_L)),r(p_L)) \enspace, \labelp{eqp:pS-pL}{eq:pS-pL} \\
  (\ell(\phat_F),r(\phat_F)) &= (\pi_1(\ell(\phat_L)),r(\phat_L)) \enspace. \labelp{eqp:phatF-phatL}{eq:phatF-phatL}
\end{align}
\end{subequations}
In the derivation of \eqref{eqp:phatL-pL} we use that for $\alpha_{2n}=(1,1,\ldots,1)\in\{0,1\}^{n-1}$ we have $f_{\alpha_{2n}}=\ol{\rev}\bullet \pi_1$, and that the abscissas where the lattice paths $\phat_L$ and $\pi_1(\phat_L)$ touch the line $y=-1$ are the same. The relations \eqref{eqp:pS-pL} and \eqref{eqp:phatF-phatL} follow from an application of Lemma~\ref{lemma:FL-matching-1}.
Applying \eqref{eqp:phatL-pL}, \eqref{eqp:pS-pL} and \eqref{eqp:phatF-phatL} instead of \eqref{eq:phatL-pL}, \eqref{eq:pS-pL} and \eqref{eq:phatF-phatL} in the derivation of \eqref{eq:tpF} yields the \emph{same} resulting relation $p_F^+=(\upstep)\circ \ol{\rev}(\phat_F)\circ (\downstep)$ (here we use again that $\ol{\rev}\bullet\pi_1=\pi_1\bullet \ol{\rev}$ and $\pi_1\bullet\pi_1=\id$). From this point the proof continues as before, i.e., we have $F(\cX_{2n}^0(k,k+1))=F(\cX_{2n}^1(k,k+1))$, as desired.
\end{proof}

By Lemma~\ref{lemma:flipp-pairs}, the sets $F(\cX_{2n}^0(k,k+1))$ and $F(\cX_{2n}^1(k,k+1))$ are equal, so in the following we will only consider $F(\cX_{2n}^1(k,k+1))$ (everything we say also holds for $F(\cX_{2n}^0(k,k+1))$).

The following lemma is an immediate corollary of Lemma~\ref{lemma:flipp-pairs}.

\begin{lemma}
\label{lemma:flipp-pairs-special}
Let $n\geq 2$ and let $\cX_{2n}^1(n,n+1)$ be the set of flippable pairs defined in \eqref{eq:flipp-1}. We define sets of pairs of lattice paths $H_1,H_2\seq D_{2n}^{=0}(n)\times D_{2n}^{=0}(n)$ by
\begin{align}
  H_1 &:= \big\{\big((\upstep,\upstep,\downstep,\downstep)\circ q,
                     (\upstep,\downstep,\upstep,\downstep)\circ q\big) \mid q\in D_{2n-4}^{=0}(n-2)\big\} \enspace, \label{eq:def-H1} \\
  H_2 &:= \big\{\big((\upstep)\circ q_1\circ(\upstep,\upstep,\downstep,\downstep)\circ (\downstep)\circ q_2,
                     (\upstep)\circ q_1\circ(\upstep,\downstep,\upstep,\downstep)\circ (\downstep)\circ q_2\big) \mid {} \notag \\
              & \hspace{6cm} q_i\in D_{2n_i}^{=0}(n_i) \text{ for } i\in\{1,2\} \text{ and } n_1+n_2=n-3 \big\} \enspace. \label{eq:def-H2}
\end{align}
Then every pair $(p,p')\in H_1\cup H_2$ is contained in the set $\varphi(F(\cX_{2n}^1(n,n+1)))$.
\end{lemma}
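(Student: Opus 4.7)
The plan is to invoke Lemma~\ref{lemma:flipp-pairs}, which identifies $\varphi(F(\cX_{2n}^1(n,n+1)))$ with $\{(\add_1(p),\add_2(p))\mid p\in D_{2n-2}^{=0}(n-1)\}$, and then, for each pair in $H_1$ and $H_2$, exhibit a suitable preimage $p\in D_{2n-2}^{=0}(n-1)$ that produces it under $\add_1$ and $\add_2$. So the entire task reduces to computing $\lr(p)$ for two concrete forms of $p$ and reading off the results from the definition of $\add_1,\add_2$ in \eqref{eq:def-add12}.

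For $H_1$, given $q\in D_{2n-4}^{=0}(n-2)$, I would take the candidate $p:=(\upstep,\downstep)\circ q\in D_{2n-2}^{=0}(n-1)$. The maximal decomposition of $p$ into nonempty pieces from $\bigcup_m D_{2m}^{=0}(m)$ starts with $p_1=(\upstep,\downstep)$, so the recursive definition of $\lr$ descends into $p_1$ and gives $\lr(p)=\rl(p_1[1,1])=1$ (since $p_1[1,1]$ is a single point). Plugging $x=1$ into the second form of \eqref{eq:def-add12} then yields exactly $\add_1(p)=(\upstep,\upstep,\downstep,\downstep)\circ q$ and $\add_2(p)=(\upstep,\downstep,\upstep,\downstep)\circ q$, which is the desired pair from $H_1$.

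For $H_2$, given $q_1\in D_{2n_1}^{=0}(n_1)$ and $q_2\in D_{2n_2}^{=0}(n_2)$ with $n_1+n_2=n-3$, I would take $p:=(\upstep)\circ q_1\circ(\upstep,\downstep)\circ(\downstep)\circ q_2$. A quick check shows $p\in D_{2n-2}^{=0}(n-1)$: the upstep count is $n-1$, the total length is $2n-2$, and $p$ first returns to $y=0$ at abscissa $2(n_1+2)$, so its maximal initial piece is $p_1:=(\upstep)\circ q_1\circ(\upstep,\downstep)\circ(\downstep)\in D_{2(n_1+2)}^{=0}(n_1+2)$. The recursion then gives $\lr(p)=\rl(p_1[1,2n_1+3])=\rl(q_1\circ(\upstep,\downstep))$; since the maximal decomposition of this subpath ends with the piece $(\upstep,\downstep)$ sitting at abscissas $2n_1+1\to 2n_1+3$, one further recursive step yields $\lr(p)=\lr((\upstep,\downstep)[2n_1+2,2n_1+2])=2n_1+2$. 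With $x=2n_1+2$, the positions $p_x,p_{x+1}=(\upstep,\downstep)$ are exactly the two interior steps of the central $(\upstep,\downstep)(\downstep)$ block, so the second form of \eqref{eq:def-add12} produces $\add_1(p)=(\upstep)\circ q_1\circ(\upstep,\upstep,\downstep,\downstep)\circ(\downstep)\circ q_2$ and $\add_2(p)=(\upstep)\circ q_1\circ(\upstep,\downstep,\upstep,\downstep)\circ(\downstep)\circ q_2$, matching the $H_2$ pair.

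The only real content beyond book-keeping is the computation of $\lr(p)$ in the $H_2$ case, which is mildly subtle because one has to correctly identify the maximal $D_{2m}^{=0}(m)$-decomposition at two successive recursion levels; I expect this to be the main point where care is needed, but it reduces to the observation that the first return of $p$ to the $x$-axis occurs precisely after the middle $(\upstep,\downstep)(\downstep)$ block, and the first return of the resulting subpath at height $y=1$ occurs at the end of its final $(\upstep,\downstep)$. Once $\lr(p)$ is pinned down, both claimed identities are immediate from \eqref{eq:def-add12}, and Lemma~\ref{lemma:flipp-pairs} concludes the proof.
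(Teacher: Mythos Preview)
Your proof is correct and follows essentially the same approach as the paper: for each family $H_1$ and $H_2$ you choose the same preimage $p$ (namely $(\upstep,\downstep)\circ q$ and $(\upstep)\circ q_1\circ(\upstep,\downstep)\circ(\downstep)\circ q_2$), verify that $\add_1(p),\add_2(p)$ produce the desired pair, and conclude via Lemma~\ref{lemma:flipp-pairs}. The paper omits the explicit recursive computation of $\lr(p)$ that you carry out, but the argument is otherwise identical.
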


Note that the pairs of lattice paths in $H_1$ and $H_2$ as defined above correspond exactly to the lattice paths that arise as images under the mapping $h$ in Lemma~\ref{lemma:hinv-is-tau1} and Lemma~\ref{lemma:hinv-is-tau2}, respectively (see \eqref{eq:hphat-hpcheck-hq} and \eqref{eq:hphat-hpcheck}). Recall that these pairs of lattice paths correspond to pairs of ordered rooted trees from $\cT_n^*$ shown at the bottom left and bottom right of Figure~\ref{fig:hinv-tau}. Consequently, for \emph{any} pair $(T,T')$ of ordered rooted trees from $\cT_n^*$ as shown at the bottom of Figure~\ref{fig:hinv-tau}, there is a flippable pair $(P,P')\in\cX_{2n}^1(n,n+1)$ such that $(T,T')=\psi(\varphi(F(P,P')))$.

\begin{proof}
Consider any lattice path $q\in D_{2n-4}^{=0}(n-2)$ and define $p:=(\upstep,\downstep)\circ q$. The lattice path $p$ is clearly contained in $D_{2n-2}^{=0}(n-1)$, and using the definitions in \eqref{eq:def-add12} it is readily checked that $\add_1(p)=(\upstep,\upstep,\downstep,\downstep)\circ q$ and $\add_2(p)=(\upstep,\downstep,\upstep,\downstep)\circ q$. By Lemma~\ref{lemma:flipp-pairs} we therefore have $H_1\seq \varphi(F(\cX_{2n}^1(n,n+1)))$.
To prove that $H_2\seq \varphi(F(\cX_{2n}^1(n,n+1)))$ we repeat the same calculation with the lattice path $p$ defined by $p:=(\upstep)\circ q_1\circ(\upstep,\downstep)\circ (\downstep)\circ q_2$ for any two lattice paths $q_1$ and $q_2$ with $q_i\in D_{2n_i}^{=0}(n_i)$ for $i\in\{1,2\}$ and $n_1+n_2=n-3$. This completes the proof.
\end{proof}

\section{Proofs of Proposition~\texorpdfstring{\ref{prop:main1}}{7} and \texorpdfstring{\ref{prop:main2}}{8}}
\label{sec:proofs-prop12}

In this section we complete our analysis of the graph $\cG(\cC_{2n+1}^1,\cX_{2n}^1(n,n+1))$ and present the proofs of Proposition~\ref{prop:main1} and \ref{prop:main2}. In the previous two sections we have shown that the nodes of $\cG(\cC_{2n+1}^1,\cX_{2n}^1(n,n+1))$ correspond to plane trees with $n$ edges, and that the edges of this graph correspond to elementary transformations between those trees (removing a leaf of the tree and attaching it to a different vertex).
This section is structured as follows: We start by defining another graph $\cG_n$ whose nodes are plane trees and whose edges correspond to elementary transformations between the trees, and we prove that this graph is connected and has many spanning trees (Lemma~\ref{lemma:Gn-connected} and \ref{lemma:Gn-spanning} below). The definition of the graph $\cG_n$ and its analysis are completely independent from anything mentioned before in this paper (we only need the concept of plane trees). Of course the definition is motivated by our knowledge about the 2-factor $\cC_{2n+1}^1$ and the flippable pairs $\cX_{2n}^1(n,n+1)$, so in the last part of this section we show that $\cG_n$ is a spanning subgraph of $\cG(\cC_{2n+1}^1,\cX_{2n}^1(n,n+1))$ (Lemma~\ref{lemma:gn-gCX} below). This implies that also $\cG(\cC_{2n+1}^1,\cX_{2n}^1(n,n+1))$ is connected and that it has the required number of spanning trees, and proves Proposition~\ref{prop:main1} and \ref{prop:main2}.

\subsection{The graph \texorpdfstring{$\cG_n$}{Gn}}
\label{sec:gn}

We begin by extending some of the notation about plane trees introduced in Section~\ref{sec:trees}.
These definitions are illustrated in Figure~\ref{fig:tau12} and Figure~\ref{fig:g6}.

\textit{Thin/thick leaves, clockwise/counterclockwise-next leaves.}
We call a leaf $u$ of a plane tree \emph{thin} or \emph{thick}, if the degree of the neighbor of $u$ is exactly 2 or at least 3, respectively. Clearly, for any tree with at least two edges, every leaf is either thin or thick.
Given two leaves $u$ and $v$ of a plane tree $T$, we say that $v$ is the \emph{clockwise-next} or \emph{counterclockwise-next leaf from $u$}, if all edges of $T$ not on the path $p$ from $u$ to $v$ lie to the right or left of $p$, respectively.

\textit{Tree operations $\tau_1$, $\tau_2$ and the graph $\cG_n$.}
Consider any plane tree $T$ with at least three edges and a thin leaf $u$. Let $u'$ be the neighbor of $u$ and $v$ the second neighbor of $u'$. We define $T'=\tau_1(T,u)$ as the plane tree obtained from $T$ by replacing the edge $(u,u')$ by the edge $(u,v)$, such that in $T'$ the leaf $u$ is the clockwise-next leaf from $u'$.

Consider any plane tree $T$ with a thick leaf $u$ whose clockwise-next leaf $v$ is thin. Let $u'$ be the neighbor of $u$. We define $\tau_2(T,u)$ as the plane tree obtained from $T$ by replacing the edge $(u,u')$ by the edge $(u,v)$.

The definitions of $\tau_1$ and $\tau_2$ are shown schematically in Figure~\ref{fig:tau12}.

\begin{figure}
\centering
\PSforPDF{
 \psfrag{t}{$T$}
 \psfrag{tp}{$T'$}
 \psfrag{tau1}{$\tau_1(T,u)$}
 \psfrag{tau2}{$\tau_2(T,u)$}
 \psfrag{r}{$r$}
 \psfrag{u}{$u$}
 \psfrag{up}{$u'$}
 \psfrag{v}{$v$}
 \psfrag{deg2}{$\deg=2$}
 \psfrag{deg3}{$\deg\geq 3$}
 \psfrag{cw}{$v$ is clockwise-next leaf from $u$}
 \psfrag{ccw}{$u$ is counterclockwise-next leaf from $v$}
 \includegraphics{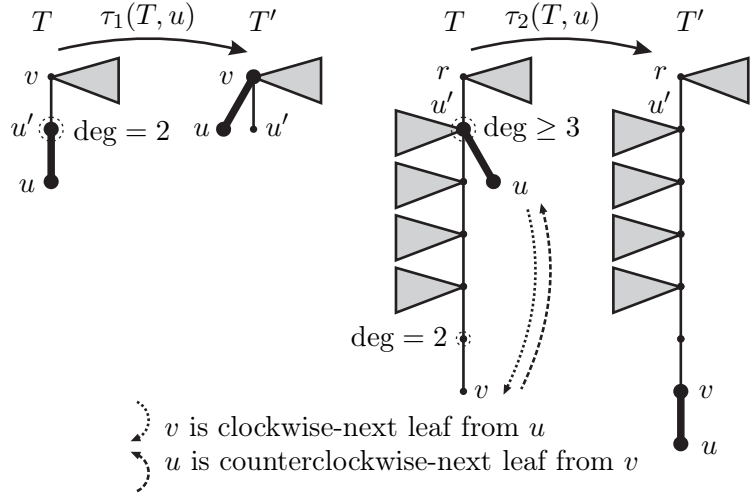}
}
\caption{Definition of the transformations $\tau_1$ (left) and $\tau_2$ (right). The edges in which the trees differ are drawn bold, and the grey areas represent arbitrary subtrees.}
\label{fig:tau12}
\end{figure}

For any $n\geq 1$ we define a directed graph $\cG_n$ whose nodes are all plane trees with $n$ edges, and whose edges capture the effects of the mappings $\tau_1$ and $\tau_2$. More formally, $\cG_n$ is a directed graph with node set $\cT_n$ and two types of edges, called $\tau_1$-edges and $\tau_2$-edges:
Any pair of trees $T,T'\in\cT_n$ is connected by a $\tau_1$-edge directed from $T$ to $T'$, if and only if $T$ has at least three edges and a thin leaf $u$ such that $T'=\tau_1(T,u)$.
Furthermore, any pair of trees $T,T'\in\cT_n$ is connected by a $\tau_2$-edge directed from $T$ to $T'$, if $T$ has a thick leaf $u$ whose clockwise-next leaf is thin such that $T'=\tau_2(T,u)$.

Figure~\ref{fig:g6} shows an example of this graph for $n=6$.

\begin{figure}
\centering
\PSforPDF{
 \psfrag{g6}{\LARGE $\cG_6$}
 \psfrag{6l}{6 leaves}
 \psfrag{5l}{5 leaves}
 \psfrag{4l}{4 leaves}
 \psfrag{3l}{3 leaves}
 \psfrag{2l}{2 leaves}
 \includegraphics{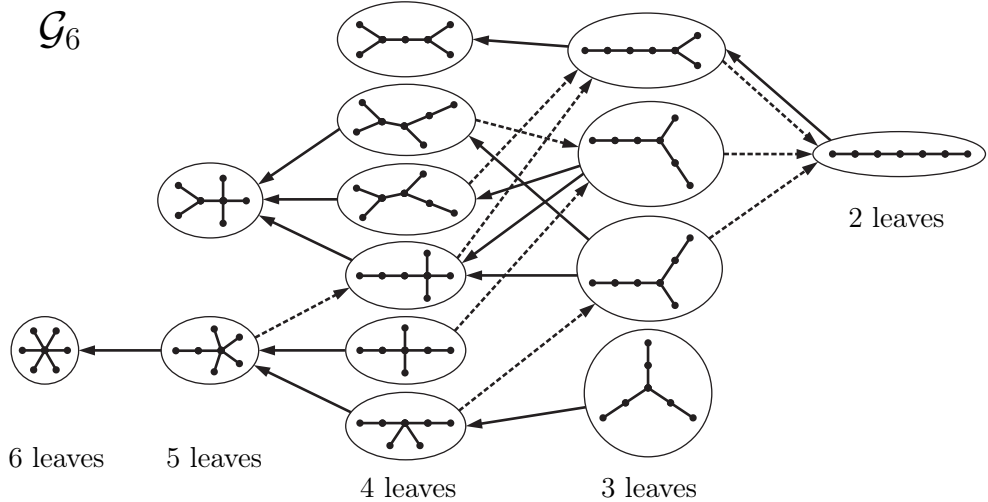}
}
\caption{The graph $\cG_6$ with nodes are arranged in layers according to the number of leaves of the corresponding plane trees. In the figure, $\tau_1$-edges are drawn as solid lines, $\tau_2$-edges as dashed lines.}
\label{fig:g6}
\end{figure}

\subsection{Properties of the graph \texorpdfstring{$\cG_n$}{Gn}}

Note that $\tau_1$ increases the number of leaves by one, and $\tau_2$ decreases the number of leaves by one (in Figure~\ref{fig:g6}, all $\tau_1$-edges go from right to left, and all $\tau_2$-edges from left to right). It follows that the subgraphs of $\cG_n$ induced either by all $\tau_1$-edges or by all $\tau_2$-edges, are acyclic. In particular, any pair of nodes of $\cG_n$ is either not connected, or connected by a single edge ($\tau_1$- or $\tau_2$-edge), or connected by a $\tau_1$-edge and a $\tau_2$-edge with opposite orientations. We conclude that $\cG_n$ does not have parallel edges with the same orientation or loops.

\begin{lemma}
\label{lemma:Gn-connected}
For any $n\geq 1$, the graph $\cG_n$ is weakly connected.
\end{lemma}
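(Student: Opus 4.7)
The plan is to show that every plane tree $T\in\cT_n$ is weakly connected in $\cG_n$ to the unique plane tree with exactly two leaves (the path with $n$ edges), by strong induction on the leaf count $\ell(T)$. The base cases $n\leq 2$ and $\ell(T)=2$ are trivial, so fix $n\geq 3$ and $T$ with $\ell(T)\geq 3$. I will show that $T$ is weakly connected to some tree with strictly fewer leaves by splitting into three cases according to whether the leaves of $T$ are all thin, all thick, or mixed.

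If $T$ has both a thick and a thin leaf, then the cyclic sequence of leaves along the contour of $T$ must somewhere contain a thick leaf immediately followed by a thin one, so $\tau_2$ applies and gives an outgoing edge of $\cG_n$ to a tree with $\ell(T)-1$ leaves. If instead all leaves of $T$ are thick, I would consider the inner tree $\widehat T$ obtained by deleting every leaf of $T$. Every leaf of $\widehat T$ is an internal vertex of $T$ having at least two leaf-neighbors (otherwise one of those leaves would be thin), and these leaf-neighbors form a contiguous block in the cyclic order of neighbors around the vertex (the unique non-leaf neighbor sits at the block's boundary). Hence $T$ has two consecutive thick leaves attached to a common vertex, yielding an incoming $\tau_1$-edge whose backward traversal reaches a tree with $\ell(T)-1$ leaves; the degenerate case where $\widehat T$ is a single vertex corresponds to $T$ being a star, and the same argument works for the consecutive leaves around the center.

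The remaining case, in which all leaves of $T$ are thin (and $T$ is not the path), is the \textbf{main obstacle}. No forward or backward move directly lowers the leaf count, so I would use a detour: apply $\tau_1$ at some thin leaf $u$ with parent $u'$ and grandparent $u''$ to obtain $T'$ with $\ell(T)+1$ leaves. This creates two new thick leaves ($u$ and $u'$, both attached to $u''$ whose degree in $T$ is at least $2$ since $n\geq 3$), while the remaining $\ell(T)-1$ leaves stay thin (the assumption $\ell(T)\geq 3$ rules out the degenerate configurations in which $u''$ carries a leaf-neighbor of $T$). Hence $T'$ falls into the mixed case, and two successive forward $\tau_2$-moves should then bring the leaf count down to $\ell(T)-1$. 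The hardest part will be verifying that the second $\tau_2$-move is actually available: after the first $\tau_2$-move the surviving new thick leaf remains thick only if $u''$ has degree at least $3$ in $T$, whereas when $u''$ has degree exactly $2$ the intermediate tree may fall back into the all-thin case. To handle this I would either choose $u$ so that its grandparent has degree $\geq 3$ whenever such a choice exists, or iterate the detour while tracking a secondary potential (for instance the length of the longest maximal chain of degree-$2$ internal vertices), which strictly decreases across each attempt and eventually forces $T$ into one of the first two cases where the induction applies.
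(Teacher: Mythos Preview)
Your case split (mixed / all thick / all thin) and the treatment of the first two cases are correct, but the all-thin case has a genuine gap. Your proposed secondary potential, the length of the longest maximal chain of degree-$2$ internal vertices, does \emph{not} decrease under your detour $\tau_1$-then-$\tau_2$. Take the spider with three arms of length $m\ge 3$ meeting at a center $c$: every leaf is thin and every grandparent has degree $2$, so your first option is unavailable. One detour (applied to the end of arm~1, with arm~2 clockwise-next) shortens arm~1 to $m-1$ and lengthens arm~2 to $m+1$; the maximal degree-$2$ chain sits inside the longest arm and hence \emph{grows} from $m-1$ to $m$ vertices. So the potential you named increases, and the argument as written loops. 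A potential that does work is the \emph{minimum} over all leaves of the distance to the nearest vertex of degree $\ge 3$, provided you always start the detour at a leaf realizing this minimum: then that leaf's leg shrinks by one, the clockwise-next leg grows by one, and all other legs are untouched, so the minimum drops by exactly one until it reaches $2$, at which point the grandparent has degree $\ge 3$ and your Case~A kicks in. With this repair your scheme goes through, but you should state and verify the correct potential rather than the one you wrote.

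For comparison, the paper's proof avoids the all-thin quagmire entirely with a different and slicker idea. Instead of inducting on the leaf count, it fixes a single thin leaf $v_0$ and repeatedly looks at the \emph{counterclockwise}-next leaf $u$ from the current marker $v_i$: if $u$ is thin apply $\tau_1(T_i,u)$ and keep $v_{i+1}=v_i$; if $u$ is thick apply $\tau_2(T_i,u)$ and set $v_{i+1}=u$. Two easy observations finish the argument: every $\tau_1$-step is immediately followed by a $\tau_2$-step, and after the $k$-th $\tau_2$-step a path of $k$ edges dangles from the original $v_0$, so the process terminates at $P_n$ in at most $2n$ steps. This gives a uniform process with a one-line termination argument, whereas your route needs a carefully chosen potential to escape the all-thin stratum.
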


Lemma~\ref{lemma:Gn-connected} is equivalent to saying that any two plane trees $T,T'\in\cT_n$ can be transformed into each other by a sequence of applications of $\tau_1$, $\tau_2$ and the corresponding inverse mappings. In fact, for the proof we will explicitly construct such a sequence of transformations.

\begin{proof}
For the reader's convenience, the notations used in the proof are illustrated in Figure~\ref{fig:Gn-connected}.

\begin{figure}
\centering
\PSforPDF{
 \psfrag{t0}{$T_0$}
 \psfrag{t1}{$T_1$}
 \psfrag{t2}{$T_2$}
 \psfrag{t3}{$T_3$}
 \psfrag{t4}{$T_4$}
 \psfrag{t5}{$T_5$}
 \psfrag{t6}{$T_6$}
 \psfrag{t7}{$T_7$}
 \psfrag{t8}{$T_8$}
 \psfrag{v0}{$v_0$}
 \psfrag{v1}{$v_1$}
 \psfrag{v2}{$v_2$}
 \psfrag{v3}{$v_3$}
 \psfrag{v4}{$v_4$}
 \psfrag{v5}{$v_5$}
 \psfrag{v6}{$v_6$}
 \psfrag{v7}{$v_7$}
 \psfrag{v8}{$v_8$}
 \psfrag{taum1}{\small $\tau_1(T_0,v_0)$}
 \psfrag{tau0}{\small $\tau_2(T_0,u)$}
 \psfrag{tau1}{\small $\tau_1(T_1,u)$}
 \psfrag{tau2}{\small $\tau_2(T_2,u)$}
 \psfrag{tau3}{\small $\tau_1(T_3,u)$}
 \psfrag{tau4}{\small $\tau_2(T_4,u)$}
 \psfrag{tau5}{\small $\tau_2(T_5,u)$}
 \psfrag{tau6}{\small $\tau_2(T_6,u)$}
 \psfrag{tau7}{\small $\tau_2(T_7,u)$}
 \psfrag{u}{$u$}
 \psfrag{up}{$u'$}
 \psfrag{v}{$v$}
 \includegraphics{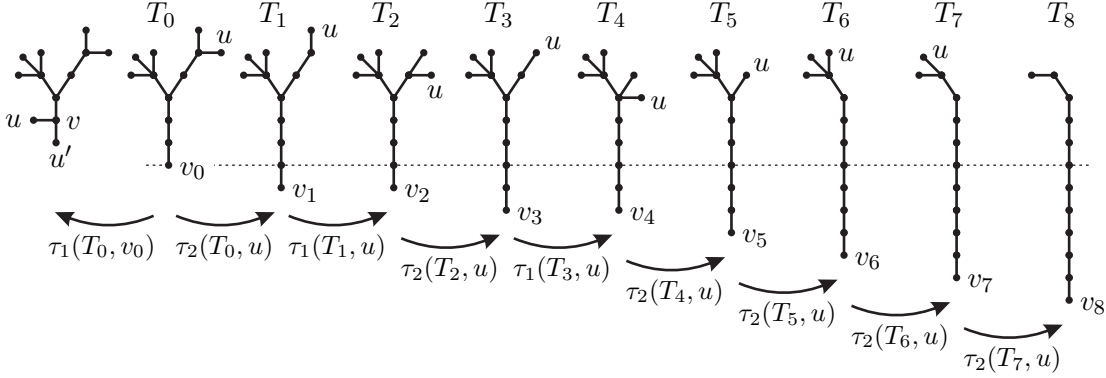}
}
\caption{Notations used in the proof of Lemma~\ref{lemma:Gn-connected}.}
\label{fig:Gn-connected}
\end{figure}

For $n=1$ and $n=2$ the claim is trivially true, as $\cG_1$ and $\cG_2$ only have a single node ($|\cT_1|=|\cT_2|=1$).
For the rest of the proof, we consider a fixed $n\geq 3$, so every tree $T\in\cT_n$ has at least three edges.
To prove the lemma, it suffices to show that any tree $T\in\cT_n$ can be transformed into $P_n\in\cT_n$, the path with $n$ edges, by a sequence of applications of $\tau_1,\tau_2$ and the corresponding inverse mappings.

To show this we fix a tree $T\in\cT_n$ that has a thin leaf.
We inductively define a sequence of trees $T_i$, $0\leq i\leq N$, and a sequence of thin leaves $v_i$ of $T_i$ as follows (see Figure~\ref{fig:Gn-connected}):
For the induction basis we define $T_0:=T$ and let $v_0$ be a thin leaf of $T$.
For the induction step $i\rightarrow i+1$, $i\geq 0$, we proceed as follows:
If $T_i=P_n$, then we are done and set $N:=i$.
Otherwise, we consider the counterclockwise-next leaf $u$ from $v_i$ in $T_i$ ($v_i$ is thin by induction).
If $u$ is thin (case~1), then we define $T_{i+1}:=\tau_1(T_i,u)$ and $v_{i+1}:=v_i$ ($v_{i+1}$ remains thin, as $v_i$ and $u$ have distance at least 3 in $T_i$).
If $u$ is thick (case~2), then we define $T_{i+1}:=\tau_2(T_i,u)$ and let $v_{i+1}:=u$ be the new leaf that has been attached to $v_i$ ($v_{i+1}$ is clearly thin). We refer to each step $i\rightarrow i+1$ as a $\tau_1$-step or $\tau_2$-step, respectively, depending on whether case~1 or case~2 applies (note that $\tau_1$- and $\tau_2$-steps correspond to $\tau_1$- and $\tau_2$-edges in $\cG_n$ on the path from $T_0$ to $T_N$).

Note that after the $k$-th $\tau_2$-step, the corresponding tree $T_{i+1}$ has a vertex $v$ with a path consisting of $k$ edges attached to it (this vertex $v$ corresponds to the original vertex $v_0$ in $T_0$).
Further observe that after a $\tau_1$-step, the counterclockwise-next leaf of $v_{i+1}$ in $T_{i+1}$ is thick, i.e., the subsequent step will be a $\tau_2$-step. In other words, a $\tau_1$-step is always followed by a $\tau_2$-step.
Combining these two observations shows that the above construction indeed yields a finite sequence of trees $T_i$, $0\leq i\leq N$, that ends with $T_N=P_n$, as desired.

It remains to consider the case that $T\in\cT_n$ has no thin leaves.
In this case, all leaves of $T$ are thick (see e.g.\ the tree on the very left of Figure~\ref{fig:Gn-connected}). It follows that $T$ has a vertex $v$ with two leaves $u$ and $u'$ as neighbors, such that $u$ is the clockwise-next leaf from $u'$ (such a vertex $v$ is given by any leaf in the tree that is obtained by removing \emph{all} leaves from $T$). Let $T'$ be the tree obtained from $T$ by replacing the edge $(u,v)$ by $(u,u')$, i.e., $T=\tau_1(T',u)$. The leaf $u$ of $T'$ is thin, and we proceed by constructing a sequence of trees leading to $P_n$ as above.
\end{proof}

The following is a considerable strengthening of Lemma~\ref{lemma:Gn-connected}.

\begin{lemma}
\label{lemma:Gn-spanning}
For any $n\geq 1$, the graph $\cG_n$ has at least $\frac{1}{4}2^{2^{\lfloor (n+1)/4\rfloor}}$ different spanning trees.
\end{lemma}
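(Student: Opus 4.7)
The plan is to construct a large hypercube subgraph $Q_k \subseteq \cG_n$, where $k := \lfloor (n+1)/4 \rfloor$, and then use the connectivity of $\cG_n$ from Lemma~\ref{lemma:Gn-connected} to transport each spanning tree of $Q_k$ to a distinct spanning tree of $\cG_n$. For $n \leq 6$ one has $k \leq 1$, so the claimed bound $\tfrac{1}{4}\,2^{2^k}$ is at most $1$ and Lemma~\ref{lemma:Gn-connected} alone suffices; from now on assume $n \geq 7$.

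\textbf{Building the hypercube.} I would fix as base plane tree $T_0 \in \cT_n$ a tree formed from a spine $v_0, v_1, \ldots, v_{n-2k}$ of $n-2k$ edges by attaching to each of the first $k$ spine vertices $v_0, \ldots, v_{k-1}$ a pendant cherry: a fresh vertex $u_i'$ joined to $v_i$ together with a leaf $u_i$ joined to $u_i'$. Each $u_i$ is then a thin leaf (its unique neighbor $u_i'$ has degree exactly $2$), and the $k$ gadgets are pairwise vertex-disjoint. For every subset $S \subseteq \{0, \ldots, k-1\}$ define $T_S \in \cT_n$ to be the plane tree obtained from $T_0$ by applying $\tau_1$ at $u_i$ for each $i \in S$. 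The locality of $\tau_1$ combined with vertex-disjointness of the gadgets makes these moves commute, so $T_S$ is well defined, and the local structure at $v_i$ (an intact cherry $v_i, u_i', u_i$ versus two sibling leaves $u_i', u_i$ of $v_i$) distinguishes whether $i \in S$. Hence the $2^k$ trees $T_S$ are pairwise distinct, and for every $i$ the pair $(T_S, T_{S \triangle \{i\}})$ is joined by a $\tau_1$-edge of $\cG_n$; therefore $\cG_n$ contains the $k$-cube $Q_k$ on the $2^k$ nodes $\{T_S\}$.

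\textbf{From $Q_k$ to $\cG_n$.} Writing $\kappa(G)$ for the number of spanning trees of a graph $G$, the classical identity
\[
  \kappa(Q_k) \;=\; \frac{1}{2^k} \prod_{j=1}^{k} (2j)^{\binom{k}{j}}
\]
gives, upon taking logarithms, $\log_2 \kappa(Q_k) = 2^k - k - 1 + \sum_{j=1}^k \binom{k}{j} \log_2 j \;\geq\; 2^k - 2$ for every $k \geq 1$, via the elementary estimate $\sum_{j=1}^k \binom{k}{j} \log_2 j \geq k - 1$ (trivial for $k = 1$, and for $k \geq 2$ it already follows from $\binom{k}{2}\log_2 2 = \binom{k}{2} \geq k-1$). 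Hence $\kappa(Q_k) \geq \tfrac{1}{4}\,2^{2^k}$. Since $\cG_n$ is connected by Lemma~\ref{lemma:Gn-connected}, so is the quotient $\cG_n / V(Q_k)$; any spanning tree of this quotient un-contracts to a forest $F$ in $\cG_n$ that is edge-disjoint from every edge of $\cG_n$ with both endpoints in $V(Q_k)$, and whose $2^k$ components each contain precisely one node of $V(Q_k)$. For any spanning tree $T'$ of $Q_k$, the union $F \cup T'$ is then a spanning tree of $\cG_n$, and distinct choices of $T'$ yield distinct spanning trees of $\cG_n$ (the edges of $F \cup T'$ lying inside $V(Q_k)$ reconstruct $T'$). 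Therefore $\kappa(\cG_n) \geq \kappa(Q_k) \geq \tfrac{1}{4}\,2^{2^{\lfloor(n+1)/4\rfloor}}$, as claimed.

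\textbf{Main obstacle.} The crux is the commutativity and distinctness of the $k$ gadget-wise $\tau_1$-moves: while commutativity on the level of abstract unordered trees is clear from vertex-disjointness, the definition of $\tau_1$ also prescribes a cyclic order at each $v_i$ through the clockwise-next-leaf condition imposed on $u_i$ relative to $u_i'$, and one must check that this order is respected independently at each gadget when several of the moves are applied simultaneously. Once this is in place, the passage from $Q_k$ to $\cG_n$ is a routine application of the contraction-and-extend principle made possible by Lemma~\ref{lemma:Gn-connected}.
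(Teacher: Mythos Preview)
Your approach is genuinely different from the paper's. Both rely on encoding bits via $2$-edge gadgets (a pendant path of length~$2$ versus two sibling leaves) and then invoking Lemma~\ref{lemma:Gn-connected} to extend to all of $\cG_n$. But whereas you aim for a full hypercube $Q_k$ and appeal to the exact formula $\kappa(Q_k)=\tfrac{1}{2^k}\prod_j (2j)^{\binom{k}{j}}$, the paper encodes a \emph{pair} of bitstrings $(x,y)$ on two arms glued to a central star with at least three edges, runs a Gray code through the diagonal $\{(x_i,x_i)\}$, and obtains a chain of $2^{k'}-1$ four-cycles (with $k'=\lfloor(n-3)/4\rfloor=k-1$); choosing three of the four edges in each cycle already gives $4^{2^{k'}-1}=\tfrac14\,2^{2^k}$ spanning trees. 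The paper's route is more elementary (no spectral formula needed) and, crucially, the central star is there precisely to kill nontrivial automorphisms and guarantee that all the encoded plane trees are distinct.

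That distinctness is where your argument has a real gap. You argue that ``the local structure at $v_i$ distinguishes whether $i\in S$'', but this presupposes that $v_i$ itself can be identified inside the plane tree $T_S$ --- i.e.\ that $T_S$ has no nontrivial automorphism, and that no isomorphism sends $T_S$ to $T_{S'}$ for $S\neq S'$. This is not automatic. For instance, take $n=11$, so $k=3$ and the spine is $v_0,\dots,v_5$. The trees $T_{\{1\}}$ and $T_{\{2\}}$ are isomorphic as \emph{abstract} trees: the unique degree-$4$ vertex is $v_1$ in the first and $v_2$ in the second, and the map $v_1\!\leftrightarrow\! v_2$, $v_0\!\leftrightarrow\! v_3$, $u_0'\!\leftrightarrow\! v_4$, $u_0\!\leftrightarrow\! v_5$, $u_2'\!\leftrightarrow\! u_1'$, $u_2\!\leftrightarrow\! u_1$ extends to an isomorphism. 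Whether it is also a \emph{plane}-tree isomorphism depends entirely on the cyclic orderings at the $v_i$, which you never specify. With a careful choice (all cherries on the same side of the spine) the only abstract isomorphisms are orientation-reversing and the plane trees do come out distinct, but this has to be argued --- and it, not the commutativity of the $\tau_1$-moves, is the actual obstacle. The commutativity you worry about is in fact harmless: each move only alters the cyclic order at its own $v_i$, so moves at different gadgets do not interact.
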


\begin{proof}
For any $n\geq 1$, we define $t_n:=\frac{1}{4}2^{2^{\lfloor (n+1)/4\rfloor}}$.
For the rest of the proof we assume that $n\geq 7$ as for $n=1,\ldots,6$, we have $t_n\leq 1$, and then the claim follows directly from Lemma~\ref{lemma:Gn-connected}.
In the following we describe a subgraph of $\cG_n$ that has at least $t_n$ different spanning trees. This proves the claim with the help of Lemma~\ref{lemma:Gn-connected}.
The construction of the subgraph of $\cG_n$ is illustrated in Figure~\ref{fig:Gn-spanning}.

\begin{figure}
\centering
\PSforPDF{
 \psfrag{tau1}{$\tau_1$}
 \psfrag{ellp}{$3+\ell$}
 \psfrag{t0}{$T_0$}
 \psfrag{t1}{$T_1$}
 \psfrag{tx}{$T(x)$}
 \psfrag{tx1}{$T_{x_1}$}
 \psfrag{tx2}{$T_{x_2}$}
 \psfrag{txk}{$T_{x_k}$}
 \psfrag{x}{\parbox{3cm}{$(x_1\ldots,x_k)$ \\ ~{} \hspace{3mm} $=(0,1,0,0)$}}
 \psfrag{txy}{$T(x,y)\in\cT_n$}
 \psfrag{u}{$u$}
 \psfrag{up}{$u'$}
 \psfrag{ttx}{$\plane(T(x))$}
 \psfrag{tty}{$\plane(T(y))$}
 \psfrag{t00}{$T(x_i,x_i)$}
 \psfrag{t10}{$T(x_{i+1},x_i)$}
 \psfrag{tr}{$T(x_{i+1},x_{i+1})$} 
 \psfrag{t01}{$T(x_i,x_{i+1})$}
 \psfrag{ts1}{$T(x_1,x_1)$}
 \psfrag{ts2}{$T(x_2,x_2)$}
 \psfrag{ts3}{$T(x_3,x_3)$}
 \psfrag{tsi}{$T(x_i,x_i)$}
 \psfrag{tsip1}{$T(x_{i+1},x_{i+1})$}
 \psfrag{ts2km1}{$T(x_{2^k-1},x_{2^k-1})$}
 \psfrag{ts2k}{$T(x_{2^k},x_{2^k})$}
 \psfrag{ggray}{\Large $\cG_n^{\gray}\seq \cG_n$}
 \psfrag{01flip}{\parbox{3cm}{bitflip $0\rightarrow 1$ from $x_i$ to $x_{i+1}$}}
 \psfrag{10flip}{\parbox{3cm}{bitflip $1\rightarrow 0$ from $x_i$ to $x_{i+1}$}}
 \psfrag{cdots}{$\cdots$}
 \includegraphics{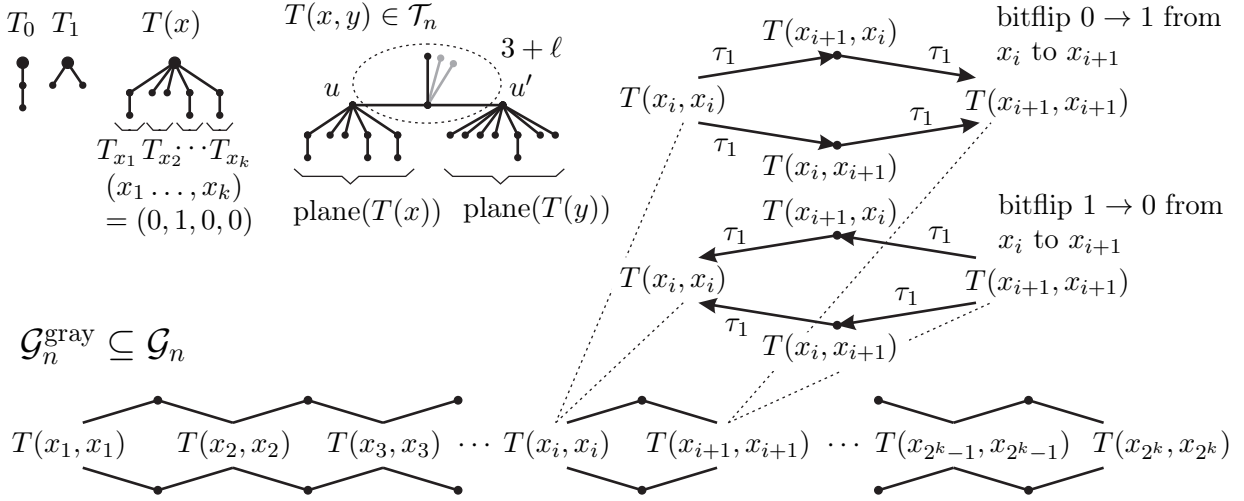}
}
\caption{Construction of trees and the subgraph of $\cG_n$ in the proof of Lemma~\ref{lemma:Gn-spanning}.}
\label{fig:Gn-spanning}
\end{figure}

Defining $k:=\lfloor (n-3)/4\rfloor$ and $\ell:=n-3-4k$, we clearly have $k\geq 1$ (recall the assumption $n\geq 7$) and $0\leq\ell<4$.
Let $T_0$ and $T_1$ be the two ordered rooted trees with two edges shown at the top left of Figure~\ref{fig:Gn-spanning} (the roots are drawn bold).
For any bitstring $x\in\{0,1\}^k$ we define an ordered rooted tree $T(x)$ with $2k$ edges that encodes the bitstring $x$ as follows: $T(x)$ is obtained by gluing together the trees $T_{x_i}$ for $i=1,\ldots,k$ at their roots from left to right (the gluing vertex becomes the root of $T(x)$). For any two bitstrings $x,y\in\{0,1\}^k$ we define the plane tree $T(x,y)$ that encodes the bitstrings $x$ and $y$ as follows: We take a star with $3+\ell$ edges and consider two of its leaves $u$ and $u'$ such that $u$ is the clockwise-next leaf from $u'$. $T(x,y)$ is obtained by gluing the trees $\plane(T(x))$ and $\plane(T(y))$ onto the leaves $u$ and $u'$ of the star, respectively, where the trees $\plane(T(x))$ and $\plane(T(y))$ are glued onto the star with the vertices that correspond to the roots of $T(x)$ and $T(y)$.
Note that $T(x,y)$ has $3+\ell+2\cdot 2k=n$ edges, so $T(x,y)\in\cT_n$.
We state the following two simple observations for further reference:

\emph{Observation 1:} For any $x,x',y,y'\in\{0,1\}^k$, if $(x,y)$ is different from $(x',y')$, then $T(x,y)$ and $T(x',y')$ are different plane trees (for this we need the star with $3+\ell$ edges in the definition of $T(x,y)$ which prevents that $T(x,y)$ has any rotational symmetries).

\emph{Observation 2:} For any $x,x'\in\{0,1\}^k$, if $x$ and $x'$ differ in exactly single bit, then for any $y\in\{0,1\}^k$, the nodes $T(x,y)$ and $T(x',y)$ are connected in $\cG_n$ (one of these trees is mapped onto the other by $\tau_1$). Similarly, if $y,y'\in\{0,1\}^k$ differ in exactly one bit, then for any $x\in\{0,1\}^k$, the nodes $T(x,y)$ and $T(x,y')$ are connected in $\cG_n$.

Let $(x_i)_{1\leq i\leq 2^k}$ be a Gray code sequence of all $2^k$ bitstrings of length $k$, i.e., for any $i=1,\ldots,2^k-1$, we have that $x_i$ and $x_{i+1}$ differ in exactly one bit.
We define
\begin{equation}
\label{eq:def-Tgray}
  \cT_n^{\gray} := \big\{T(x_i,x_i),T(x_i,x_{i+1}),T(x_{i+1},x_i) \mid i=1,\ldots,2^k-1\big\} \cup \big\{T(x_{2^k},x_{2^k})\big\} \enspace.
\end{equation}
By Observation~1, all trees on the right hand side of \eqref{eq:def-Tgray} are different, i.e., $|\cT_n^{\gray}|=3\cdot (2^k-1)+1$.
By Observation~2, for any $i=1,\ldots,2^k-1$, the nodes $T(x_i,x_i)$, $T(x_i,x_{i+1})$, $T(x_{i+1},x_i)$ and $T(x_{i+1},x_{i+1})$ form a 4-cycle $C_i$ in $\cG_n$. We let $\cG_n^{\gray}$ be the subgraph of $\cG_n$ on the node set $\cT_n^{\gray}$ with the edges of all these 4-cycles $C_i$, $i=1,\ldots,2^k-1$. Note that choosing any 3 from the 4 edges of $C_i$, independently for each $i=1,\ldots,2^k-1$, yields a different spanning tree of $\cT_n^{\gray}$. We conclude that $\cT_n^{\gray}$ has at least $4^{2^k-1}=\frac{1}{4}2^{2^{k+1}}=\frac{1}{4}2^{2^{\lfloor (n+1)/4\rfloor}}=t_n$ different spanning trees, proving the lemma.
\end{proof}

\subsection{Relation between the graphs \texorpdfstring{$\cG_n$}{Gn} and \texorpdfstring{$\cG(\cC_{2n+1}^1,\cX_{2n}^1(n,n+1))$}{G(C2n+11,X2n1)}}

We now tie together the results from all previous sections of this paper by showing that the graph $\cG_n$ is a spanning subgraph of $\cG(\cC_{2n+1}^1,\cX_{2n}^1(n,n+1))$. This implies that also $\cG(\cC_{2n+1}^1,\cX_{2n}^1(n,n+1))$ is connected and that it has the required number of spanning trees.

\begin{lemma}
\label{lemma:gn-gCX}
Let $\cC_{2n+1}^1$ be the 2-factor defined in \eqref{eq:2-factor-1}, $\cX_{2n}^1(n,n+1)$ the set of flippable pairs defined in \eqref{eq:flipp-1}, and $\cG(\cC_{2n+1}^1,\cX_{2n}^1(n,n+1))$ the graph defined in Section~\ref{sec:gCX}.
Furthermore, let $\cG_n$ be the graph defined in Section~\ref{sec:gn}.
For any $n\geq 1$, the graph $\cG_n$ is a spanning subgraph of $\cG(\cC_{2n+1}^1,\cX_{2n}^1(n,n+1))$.
\end{lemma}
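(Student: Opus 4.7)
The plan is to use the bijection $C\mapsto T^1(C)$ from Lemma~\ref{lemma:2f-C1} to identify the node set of $\cG(\cC_{2n+1}^1,\cX_{2n}^1(n,n+1))$ (the cycles of $\cC_{2n+1}^1$) with the node set $\cT_n$ of $\cG_n$. With the vertex sets matched, what remains is to show that every directed edge of $\cG_n$ is realized by a flippable pair from $\cX_{2n}^1(n,n+1)$ linking the corresponding cycles, which by Lemma~\ref{lemma:flipp-pairs-special} will reduce to exhibiting pairs in $H_1$ or $H_2$ whose associated plane trees are the required ones.

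For a $\tau_1$-edge from $T$ to $T'=\tau_1(T,u)$, with $u$ a thin leaf of $T$, neighbor $u'$, and second neighbor $v$ of $u'$, I would form the ordered rooted tree $T^*:=\troot(T,(v,u'))\in\cT_n^*$. By construction, $v$ is the root, $u'$ its leftmost child, and $u$ the only child of $u'$; hence $\psi^{-1}(T^*)=(\upstep,\upstep,\downstep,\downstep)\circ r$ for some $r\in D_{2n-4}^{=0}(n-2)$. Setting $q:=h(r)$, the pair $\bigl((\upstep,\upstep,\downstep,\downstep)\circ q,\,(\upstep,\downstep,\upstep,\downstep)\circ q\bigr)$ lies in $H_1$, so Lemma~\ref{lemma:flipp-pairs-special} produces a flippable pair $(P,P')\in\cX_{2n}^1(n,n+1)$ with $\varphi(F(P,P'))$ equal to this pair. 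By Lemma~\ref{lemma:hinv-is-tau1}, $h^{-1}$ carries the two lattice paths back to $(\upstep,\upstep,\downstep,\downstep)\circ r=\psi^{-1}(T^*)$ and $(\upstep,\downstep,\upstep,\downstep)\circ r=:\psi^{-1}(T^{*'})$; therefore, by Lemma~\ref{lemma:2f-C1}, the flippable pair $(P,P')$ connects the cycles with associated plane trees $\plane(T^*)=T$ and $\plane(T^{*'})$. A local inspection shows $\plane(T^{*'})=\tau_1(T,u)=T'$, since in $T^{*'}$ the root $v$ has the two leaves $u,u'$ as its leftmost children while the rest of the tree is unchanged relative to $T^*$.

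The $\tau_2$-case follows the same template but is combinatorially heavier. Given a thick leaf $u$ of $T$ whose clockwise-next leaf $v$ is thin, one traces the ``outside'' arc from $u$ to $v$, identifies along it the successive subtrees that will become $q_1,\dots,q_{k+1}$ in Lemma~\ref{lemma:hinv-is-tau2}, and chooses a root of $T$ so that $\psi^{-1}(T^*)$ takes the full form $\phat$ of that lemma. The modified tree $T^{*'}$ with $\psi^{-1}(T^{*'})=\pcheck$ then has $\plane(T^{*'})=\tau_2(T,u)$, and Lemmas~\ref{lemma:hinv-is-tau2} and~\ref{lemma:flipp-pairs-special} again produce a flippable pair in $\cX_{2n}^1(n,n+1)$ realizing the required edge.

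The main obstacle is neither conceptual nor algebraic but geometric bookkeeping: one must track carefully how the left-to-right order of children in an ordered rooted tree, the cyclic order of neighbors in the corresponding plane tree, and the clockwise/counterclockwise conventions built into the definitions of $\tau_1$, $\tau_2$, and the mappings $\plane,\troot$ are interlinked, so that the ordered rooted trees $T^{*'}$ produced by the lattice-path modifications truly equal $\troot(T',\cdot)$ for the right choice of root edge. In the $\tau_2$-case this also involves showing that every thick leaf with thin clockwise-next leaf does correspond to some parameter data $(k,q_0,\ldots,q_{k+1})$ in Lemma~\ref{lemma:hinv-is-tau2}. Once these identifications are nailed down, Lemmas~\ref{lemma:hinv-is-tau1} and~\ref{lemma:hinv-is-tau2} together with Lemma~\ref{lemma:flipp-pairs-special} carry out the remainder of the argument essentially automatically.
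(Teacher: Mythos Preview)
Your proposal is correct and follows essentially the same approach as the paper: the same node identification via $C\mapsto T^1(C)$ from Lemma~\ref{lemma:2f-C1}, the same root choice $\troot(T,(v,u'))$ for the $\tau_1$-case, and the same reduction to Lemmas~\ref{lemma:hinv-is-tau1}, \ref{lemma:hinv-is-tau2}, and~\ref{lemma:flipp-pairs-special}. The only place the paper is more explicit than you is in the $\tau_2$-case, where it specifies the root edge as $\troot(T,(r,u'))$ with $r$ the neighbor of $u'$ immediately after $u$ in the counterclockwise order; with that choice the identification $\psi^{-1}(\That)=\phat$ in the form of~\eqref{eq:phat-pcheck} becomes immediate.
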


\begin{proof}
By Lemma~\ref{lemma:2f-C1}, the number of nodes of $\cG(\cC_{2n+1}^1,\cX_{2n}^1(n,n+1))$ is $|\cT_n|$, so this graph has the same number of nodes as $\cG_n$. We map the nodes of $\cG(\cC_{2n+1}^1,\cX_{2n}^1(n,n+1))$ onto the nodes of $\cG_n$ as follows: Any node of $\cG(\cC_{2n+1}^1,\cX_{2n}^1(n,n+1))$ corresponds to a cycle $C$ of the 2-factor $\cC_{2n+1}^1$, and we map this node onto the plane tree $T^1(C)\in\cT_n$ (which is a node of $\cG_n$), where $T^1(C)$ is defined in \eqref{eq:def-TC-1}. It remains to show that under this bijection between the node sets of the two graphs, any directed edge of $\cG_n$ is also present in $\cG(\cC_{2n+1}^1,\cX_{2n}^1(n,n+1))$.

We fix an arbitrary $\tau_1$-edge of the graph $\cG_n$ from a plane tree $T$ to another plane tree $T'$, where $T,T'\in\cT_n$. Let $u$ be a thin leaf of $T$ such that $T'=\tau_1(T,u)$, let $u'$ be the neighbor of $u$ and $v$ the second neighbor of $u'$ in $T$ (see the left hand side of Figure~\ref{fig:tau12}). Recall that $T'$ is obtained from $T$ by replacing the edge $(u,u')$ by the edge $(u,v)$ such that in $T'$ the leaf $u$ is the clockwise-next leaf from $u'$.
We define the ordered rooted trees $\That:=\troot(T,(v,u'))$ and $\That':=\troot(T',(v,u))$ (both from $\cT_n^*$), and the corresponding lattice paths $\phat:=\psi^{-1}(\That)$ and $\phat':=\psi^{-1}(\That')$ (both from $D_{2n}^{=0}(n)$).
By Lemma~\ref{lemma:2f-C1} there are cycles $C,C'\in\cC_{2n+1}^1$ such that $T^1(C)=T$ and $T^1(C')=T'$, so by the definition in \eqref{eq:def-TC-1} we have $h(\That)\in\psi(\varphi(F(C)))$ and $h(\That')\in\psi(\varphi(F(C')))$.
We proceed to show that there is a flippable pair $(P,P')\in\cX_{2n}^1(n,n+1)$ such that $\varphi(F(P,P'))=(h(\phat),h(\phat'))$. This shows that there is a directed edge in $\cG(\cC_{2n+1}^1,\cX_{2n}^1(n,n+1))$ from $C$ to $C'$.
Indeed, $\phat$ and $\phat'$ satisfy the preconditions of Lemma~\ref{lemma:hinv-is-tau1} (compare the left hand sides of Figure~\ref{fig:hinv-tau} and Figure~\ref{fig:tau12}), i.e., their images under $h$ are of the form \eqref{eq:hphat-hq} and \eqref{eq:hpcheck-hq}, respectively.
It follows that $(h(\phat),h(\phat'))$ is contained in the set $H_1$ defined in \eqref{eq:def-H1}.
Therefore, by Lemma~\ref{lemma:flipp-pairs-special}, $(h(\phat),h(\phat'))$ is contained in $\varphi(F(\cX_{2n}^1(n,n+1)))$, i.e., there is indeed a flippable pair $(P,P')\in\cX_{2n}^1(n,n+1)$ with $\varphi(F(P,P'))=(h(\phat),h(\phat'))$.

We fix an arbitrary $\tau_2$-edge of the graph $\cG_n$ from a plane tree $T$ to another plane tree $T'$, where $T,T'\in\cT_n$. Let $u$ be a thick leaf of $T$ such that $T'=\tau_2(T,u)$, let $u'$ be the neighbor of $u$ and $v$ the clockwise-next leaf from $u$ in $T$ ($v$ is thin). Furthermore, among all neighbors of $u'$, let $r$ be the next one after $u$ in the cylic (=counterclockwise) ordering of neighbors (see the right hand side of Figure~\ref{fig:tau12}). Recall that $T'$ is obtained from $T$ by replacing the edge $(u,u')$ by the edge $(u,v)$.
We define the ordered rooted trees $\That:=\troot(T,(r,u'))$ and $\That':=\troot(T',(r,u'))$ (both from $\cT_n^*$), and the corresponding lattice paths $\phat:=\psi^{-1}(\That)$ and $\phat':=\psi^{-1}(\That')$ (both from $D_{2n}^{=0}(n)$).
By Lemma~\ref{lemma:2f-C1} there are cycles $C,C'\in\cC_{2n+1}^1$ such that $T^1(C)=T$ and $T^1(C')=T'$, so by the definition in \eqref{eq:def-TC-1} we have $h(\That)\in\psi(\varphi(F(C)))$ and $h(\That')\in\psi(\varphi(F(C')))$.
We proceed to show that there is a flippable pair $(P,P')\in\cX_{2n}^1(n,n+1)$ such that $\varphi(F(P,P'))=(h(\phat),h(\phat'))$. This shows that there is a directed edge in $\cG(\cC_{2n+1}^1,\cX_{2n}^1(n,n+1))$ from $C$ to $C'$.
Indeed, $\phat$ and $\phat'$ satisfy the preconditions of Lemma~\ref{lemma:hinv-is-tau2} (compare the right hand sides of Figure~\ref{fig:hinv-tau} and Figure~\ref{fig:tau12}), i.e., their images under $h$ are of the form \eqref{eq:hhphat} and \eqref{eq:hhpcheck}, respectively.
It follows that $(h(\phat),h(\phat'))$ is contained in the set $H_2$ defined in \eqref{eq:def-H2}.
Therefore, by Lemma~\ref{lemma:flipp-pairs-special}, $(h(\phat),h(\phat'))$ is contained in $\varphi(F(\cX_{2n}^1(n,n+1)))$, i.e., there is indeed a flippable pair $(P,P')\in\cX_{2n}^1(n,n+1)$ with $\varphi(F(P,P'))=(h(\phat),h(\phat'))$.

This completes the proof.
\end{proof}

We are now ready to prove Proposition~\ref{prop:main1} and \ref{prop:main2}.

\begin{proof}[Proof of Proposition~\ref{prop:main1}]
Combine Lemma~\ref{lemma:Gn-connected} and Lemma~\ref{lemma:gn-gCX}.
\end{proof}

\begin{proof}[Proof of Proposition~\ref{prop:main2}]
Combine Lemma~\ref{lemma:Gn-spanning} and Lemma~\ref{lemma:gn-gCX}.
\end{proof}

\section{Acknowledgements}

The author thanks Rajko Nenadov, Markus Sprecher and Franziska Weber for helpful discussions about this problem.

\bibliographystyle{alpha}
\bibliography{refs}

\end{document}